\documentclass[a4paper]{amsart}
\pdfoutput=1
\usepackage[utf8]{inputenc}
\usepackage[T1]{fontenc}
\usepackage{lmodern}
\usepackage{amssymb}
\usepackage{nicefrac,mathtools,enumitem,amsmath,amsxtra}
\usepackage{microtype}

\usepackage{tikzexternal}

\tikzexternalize

\tikzset{dar/.style={double,double equal sign distance,-implies}}
\tikzset{mid/.style={anchor=mid}} 

\tikzset{triar/.style={anchor=mid,->}}
\tikzset{tridar/.style={anchor=mid,double,double equal sign distance,-implies}}
\tikzset{narrowfill/.style={inner sep=0pt, fill=white}}

\setlist[enumerate,1]{label=\textup{(\arabic*)}}
\setlist[enumerate,2]{label=\textup{(\alph*)}}

\usepackage[pdftitle={Noncommutative coarse geometry},
pdfauthor={Tathagata Banerjee and Ralf Meyer},
pdfsubject={Mathematics}
]{hyperref}
\usepackage[lite]{amsrefs}

\renewcommand{\PrintDOI}[1]{\href{http://dx.doi.org/\detokenize{#1}}{doi: \detokenize{#1}}}

\BibSpec{book}{%
  +{}  {\PrintPrimary}                {transition}
  +{,} { \textit}                     {title}
  +{.} { }                            {part}
  +{:} { \textit}                     {subtitle}
  +{,} { \PrintEdition}               {edition}
  +{}  { \PrintEditorsB}              {editor}
  +{,} { \PrintTranslatorsC}          {translator}
  +{,} { \PrintContributions}         {contribution}
  +{,} { }                            {series}
  +{,} { \voltext}                    {volume}
  +{,} { }                            {publisher}
  +{,} { }                            {organization}
  +{,} { }                            {address}
  +{,} { \PrintDateB}                 {date}
  +{,} { }                            {status}
  +{}  { \parenthesize}               {language}
  +{}  { \PrintTranslation}           {translation}
  +{;} { \PrintReprint}               {reprint}
  +{.} { }                            {note}
  +{.} {}                             {transition}
  +{} { \PrintDOI}                   {doi}
  +{} { available at \url}            {eprint}
  +{}  {\SentenceSpace \PrintReviews} {review}
}

\theoremstyle{plain}
\newtheorem{theorem}[subsection]{Theorem}
\newtheorem{lemma}[subsection]{Lemma}
\newtheorem{proposition}[subsection]{Proposition}

\newtheorem{corollary}[subsection]{Corollary}

\theoremstyle{definition}
\newtheorem{definition}[subsection]{Definition}

\theoremstyle{remark}
\newtheorem{remark}[subsection]{Remark}

\newtheorem{example}[subsection]{Example}

\newcommand*{\defeq}{\mathrel{\vcentcolon=}}

\newcommand*{\braket}[2]{\langle#1, #2\rangle}
\newcommand*{\congto}{\xrightarrow\sim}

\newcommand{\idealin}{\mathrel{\triangleleft}} 
\newcommand*{\into}{\rightarrowtail}
\newcommand*{\prto}{\twoheadrightarrow}

\newcommand*{\Mult}{\mathcal M}
\newcommand*{\Coarse}{\mathcal{C}} 
\newcommand*{\Compactified}{\mathcal{S}}  
\newcommand{\C}{\mathbb{C}}
\newcommand{\N}{\mathbb{N}}
\newcommand{\Z}{\mathbb{Z}}
\newcommand{\R}{\mathbb{R}}
\newcommand{\T}{\mathbb{T}}

\newcommand*{\Hils}[1][H]{\mathcal #1}

\newcommand*{\Cont}{\mathrm C}
\newcommand*{\Contc}{\mathrm{C_c}}
\newcommand*{\Contb}{\mathrm{C_b}}

\newcommand*{\Star}{$^*$\nobreakdash-}
\newcommand*{\nb}{\nobreakdash}
\newcommand*{\Cst}{\mathrm C^*}
\newcommand*{\Wst}{\mathrm W^*}
\newcommand*{\Cred}{\mathrm C^*_\mathrm r}


\newcommand{\Comp}{\mathbb K}
\newcommand{\Bound}{\mathbb B}
\newcommand{\Mat}{\mathbb M}

\makeatletter
\newsavebox\myboxA
\newsavebox\myboxB
\newlength\mylenA

\newcommand*\xoverline[2][0.75]{%
    \sbox{\myboxA}{$\m@th#2$}%
    \setbox\myboxB\null
    \ht\myboxB=\ht\myboxA%
    \dp\myboxB=\dp\myboxA%
    \wd\myboxB=#1\wd\myboxA
    \sbox\myboxB{$\m@th\overline{\copy\myboxB}$}
    \setlength\mylenA{\the\wd\myboxA}
    \addtolength\mylenA{-\the\wd\myboxB}%
    \ifdim\wd\myboxB<\wd\myboxA%
       \rlap{\hskip 0.5\mylenA\usebox\myboxB}{\usebox\myboxA}%
    \else
        \hskip -0.5\mylenA\rlap{\usebox\myboxA}{\hskip 0.5\mylenA\usebox\myboxB}%
    \fi}
\makeatother

\newcommand*{\abs}[1]{\lvert#1\rvert}
\newcommand*{\norm}[1]{\lVert#1\rVert}

\newcommand*{\cl}[1]{\xoverline{#1}}
\newcommand*{\cmp}[1]{\xoverline{#1}}
\newcommand*{\bdy}[1]{\partial #1}
\newcommand{\blank}{{\llcorner\!\lrcorner}}

\newcommand*{\Id}{\mathrm{id}}

\DeclareMathOperator{\supp}{supp}

\DeclareMathOperator{\Aut}{Aut}
\DeclareMathOperator{\Ad}{Ad}

\DeclareMathOperator{\const}{const}

\begin{document}
\title{Noncommutative coarse geometry}
\author{Tathagata Banerjee}
\email{tathagata@mathematik.uni-goettingen.de}
\author{Ralf Meyer}
\email{rmeyer2@uni-goettingen.de}

\address{Mathematisches Institut,
  Georg-August Universit\"at G\"ottingen, Bunsenstra{\ss}e 3--5,
  37073 G\"ottingen, Germany}

\keywords{\(\Cst\)-algebra; coarse structure; compactification;
  Higson compactification; Rieffel deformation; continuously
  square-integrable group action; Toeplitz operator.}

\subjclass{46L65}

\begin{abstract}
  We use compactifications of \(\Cst\)\nb-algebras to introduce
  noncommutative coarse geometry.  We transfer a noncommutative
  coarse structure on a \(\Cst\)\nb-algebra with an action of a
  locally compact Abelian group by translations to Rieffel
  deformations and prove that the resulting noncommutative coarse
  spaces are coarsely equivalent.  We construct a noncommutative
  coarse structure from a cocompact continuously square-integrable
  action of a group and show that this is coarsely equivalent to the
  standard coarse structure on the group in question.  We define
  noncommutative coarse maps through certain completely positive
  maps that induce \Star{}homomorphisms on the boundaries of the
  compactifications.  We lift \Star{}homomorphisms between
  separable, nuclear boundaries to noncommutative coarse maps and
  prove an analogous lifting theorem for maps between the metrisable
  boundaries of ordinary locally compact spaces.
\end{abstract}
\maketitle

\section{Introduction}
\label{sec:intro}

Let~\((X,d)\) be a metric space.  The topology on~\(X\) associated
to~\(d\) encodes its small-scale features.  In contrast, coarse
geometry studies the large-scale geometry of the metric space,
disregarding any phenomena that occur only on some finite
length-scale.  For instance, if a finitely generated discrete
group~\(G\) acts on~\((X,d)\) properly, cocompactly and by
isometries, then~\(G\) with any word-length metric~\(d_G\) is
coarsely equivalent to~\((X,d)\), that is, coarse geometry does not
distinguish the metric spaces \((G,d_G)\) and~\((X,d)\).
Quantum mechanics allows noncommutative spaces
where the coordinate functions no longer commute.  Is large-scale
geometry possible also for such ``spaces''?  Intuitively,
quantum phenomena only occur on small length scales.  So coarse
geometry for noncommutative spaces should be possible, and the
physical ones should even be coarsely equivalent to ordinary
spaces.  We test this intuition on Rieffel deformations, which
provide several toy models used in quantum physics to model quantum
spacetimes.  Intuitively, Rieffel deformation only operates on some
finite length scale depending on the deformation parameter.  So it
should not affect the coarse geometry of a space.

We are going to define noncommutative coarse spaces
and noncommutative coarse maps and show that Rieffel deformation
yields coarsely equivalent noncommutative coarse spaces.  For
instance, the Moyal plane, with a canonical noncommutative coarse
structure, is coarsely equivalent to the classical Euclidean plane.

How should we extend coarse geometry to noncommutative spaces?
The coarse geometric features of a metric~\(d\) on a space~\(X\) may
be encoded through the family of all subsets of~\(X\times X\) on
which the distance function~\(d\) is uniformly bounded; these
subsets are called \emph{controlled}.  Two metrics on~\(X\) are
\emph{coarsely equivalent} if they have the same controlled subsets.
There is, however, no obvious noncommutative analogue of controlled
subsets.
Connes
describes Riemannian metrics in noncommutative geometry
through spectral triples.  A spectral triple comes with a dense
subalgebra -- the domain of the derivation \([D,\blank]\) -- which is
complete in a suitable Lipschitz norm, and
Rieffel~\cite{Rieffel:Metrics_state} encodes a metric on a
noncommutative space through such a densely defined Lipschitz norm on
a \(\Cst\)\nb-algebra.  The existing literature in this direction is
mostly restricted to the unital case, however, and studies the
induced topology on the state space, that is, the
small-scale features of the metric.  Since
any bounded metric space is coarsely equivalent to the one-point
space, noncommutative coarse geometry is only concerned with
non-unital \(\Cst\)\nb-algebras.  We shall, therefore, define coarse
geometric structures on \(\Cst\)\nb-algebras differently.

We work with compactifications, which are another way to capture the
coarse geometry of a metric.  The \emph{Higson compactification} of
a metric space~\((X,d)\) is the spectrum of the commutative
\(\Cst\)\nb-algebra of all continuous functions \(f\colon X\to\C\)
for which the functions
\[
\Delta_R f(x) \defeq \sup \{ \abs{f(y)-f(x)} \mid d(x,y)\le R\}
\]
vanish at infinity for all \(R\ge0\); we briefly say that such
functions \emph{oscillate slowly}.  Two metrics give the same
Higson compactification if and only if they give the same coarse
structure; this is proved by Roe~\cite{Roe:Lectures}.  There are
coarse structures that do not come from a metric
and which cannot be obtained back from their associated Higson
compactification.  But since most coarse structures of interest come
either from a metric or directly from a compactification, it seems
legitimate to replace coarse structures by compactifications to
achieve a noncommutative generalisation.

A \emph{compactification} for a \(\Cst\)\nb-algebra~\(A\) is a
unital \(\Cst\)\nb-algebra~\(\cmp{A}\) with an isomorphism
from~\(A\) onto an essential ideal in~\(\cmp{A}\).  Any
such~\(\cmp{A}\) embeds uniquely into the multiplier
algebra~\(\Mult(A)\).  Hence a noncommutative compactification is
equivalent to a unital subalgebra~\(\cmp{A}\) of~\(\Mult(A)\) that
contains~\(A\).  A \emph{noncommutative coarse space} is a
\(\Cst\)\nb-algebra together with such a compactification.

How does Rieffel deformation affect such a noncommutative coarse
space?  Let~\(\Psi\) be a \(2\)\nb-cocycle on a locally compact
Abelian group~\(\Gamma\) and let~\(\Gamma\) act continuously on a
locally compact space \(X\).  Let~\(\cmp{X}\) be a compactification
of~\(X\) such that the action of~\(\Gamma\) on~\(X\) extends
continuously to~\(\cmp{X}\) and the induced action on the boundary
\(\bdy{X} \defeq \cmp{X}\setminus X\) is trivial; then we say
that~\(\Gamma\) acts on the coarse space~\(X\) \emph{by
  translations}.  Rieffel deformation for a given cocycle~\(\Psi\)
is an exact functor \(A\mapsto A^\Psi\) on the category of
\(\Gamma\)-\(\Cst\)-algebras; this follows from Kasprzak's
description of Rieffel deformation
in~\cite{Kasprzak:Rieffel_deformation}.  It fixes
\(\Cst\)\nb-algebras with a trivial action.  Hence the extension
\[
\Cont_0(X) \into \Cont(\cmp{X}) \prto \Cont(\bdy{X})
\]
induces another extension
\[
\Cont_0(X)^\Psi \into \Cont(\cmp{X})^\Psi \prto
\Cont(\bdy{X})^\Psi
\]
with \(\Cont(\bdy{X})^\Psi = \Cont(\bdy{X})\).  This provides a
coarse structure on the Rieffel deformed \(\Cst\)\nb-algebra
\(\Cont_0(X)^\Psi\) with the same commutative
boundary~\(\Cont(\bdy{X})\).  The same argument works when we
deform a noncommutative coarse space \(A\idealin \cmp{A}\),
provided~\(\Gamma\) acts trivially on the boundary \(\bdy{A} =
\cmp{A}/A\).

The Higson corona \(\bdy{X} = \cmp{X}\setminus X\) is an important
invariant of a coarse space.  If the boundaries are second
countable, then any continuous map between the boundaries lifts to a
coarse map between the interiors
(Theorem~\ref{the:lift_metrisable}).  Thus the boundary is a
complete invariant up to coarse equivalence when it is second
countable.  It is unclear, however, whether this remains true for
larger boundaries such as those coming from a proper metric on the
interior.  We therefore want some kind of map between \(\Cont_0(X)\)
and~\(\Cont_0(X)^\Psi\) that induces the identity
\(\Cont(\bdy{X})^\Psi = \Cont(\bdy{X})\) between the boundary
quotient \(\Cst\)\nb-algebras.  We cannot expect this to be an
ordinary \Star{}homomorphism: in coarse geometry,
we often need discontinuous coarse maps.

Our definition of a noncommutative coarse map is inspired by a
construction by Kaschek, Neumaier and
Waldmann~\cite{Kaschek-Neumaier-Waldmann:Positivity} for
transferring states on a \(\Cst\)\nb-algebra to a Rieffel
deformation.  We define a noncommutative coarse map between two
noncommutative coarse spaces \((A,\cmp{A})\) and \((B,\cmp{B})\) as
a completely positive, contractive, strictly continuous map
\(\varphi\colon A\to B\) such that the resulting strictly continuous
extension \(\cmp{\varphi}\colon \Mult(A)\to\Mult(B)\) is unital,
maps \(\cmp{A}\) to~\(\cmp{B}\), and induces a \Star{}homomorphism
\(\bdy{A}\to\bdy{B}\).  Such a map is behind the transfer of states
in~\cite{Kaschek-Neumaier-Waldmann:Positivity}.  Two noncommutative
coarse maps from \(A\idealin \cmp{A}\) to \(B\idealin \cmp{B}\) are
\emph{close} if their difference maps~\(\cmp{A}\) into~\(B\).  In
Kasprzak's approach to Rieffel deformation
in~\cite{Kasprzak:Rieffel_deformation}, it is easy to build
noncommutative coarse maps \(A\to A^\Psi\) and back whose composites
are close to the identity maps on \(A\) and~\(A^\Psi\), whenever the
induced group action on the boundary~\(\bdy{A}\) is trivial.  These
form a coarse equivalence between \(A\) and~\(A^\Psi\).  In
particular, this applies to the Moyal plane and the classical plane
with their canonical coarse geometries.

The construction above provides rather well-behaved quantisation
maps for Rieffel deformations.  For the Moyal plane, this is a map
from \(\Cont_0(\R^2)\) to \(\Comp(L^2\R)\) that is completely
positive, contractive, and strictly continuous with a unital
extension \(\Contb(\R^2)\to\Bound(L^2\R)\), and such that the
induced map from \(\Contb(\R^2)/\Cont_0(\R^2)\) to the Calkin
algebra is a \Star{}homomorphism on the huge, non-separable
\(\Cst\)\nb-algebra \(\Cont(\bdy{\R^2})\).  All functions that are
continuous on the usual ball compactification of~\(\R^2\) oscillate
slowly and are therefore contained in~\(\Cont(\cmp{\R^2})\).  Thus
shrinking the boundary gives an extension of the Moyal
plane~\(\Comp(L^2\R)\) by~\(\Cont(\T)\), the continuous functions on
the circle.  This extension is the Toeplitz extension by the results
of~\cite{Coburn-Xia:Toeplitz_Rieffel}.  Thus the noncommutative
coarse structure on the Moyal plane~\(\Comp(L^2\R)\) given by the
Toeplitz \(\Cst\)\nb-algebra extension is coarsely equivalent to the
coarse structure on the classical Moyal plane~\(\Cont_0(\R^2)\)
given by Rieffel deformation of the ball compactification.

The relationship between noncommutative coarse maps and ordinary
coarse maps between coarse spaces is not yet clear.  Any coarse map
between commutative coarse spaces is close to a noncommutative
coarse map as defined above, by replacing a discontinuous map by a
continuous map taking values in probability measures.  We failed,
however, to prove the converse, that is, it is conceivable that
there are more noncommutative coarse maps than ordinary coarse maps.

As another example besides Rieffel deformations, we construct a
compactification of a \(\Cst\)\nb-algebra~\(A\) from a cocompact
continuously square-integrable group action on~\(A\).  This
compactification is coarsely equivalent to the group that acts.
Continuously square-integrable actions are slightly more general
than the ``proper'' actions defined by
Rieffel~\cite{Rieffel:Proper}.  So this construction is a
noncommutative analogue of the canonical coarse structure on a
cocompact proper \(G\)\nb-space (see
Example~\ref{exa:coarse_group}).

For noncommutative coarse spaces with separable, nuclear boundary,
we prove that the boundary is a complete invariant up to coarse
equivalence.  In this case, any unital \Star{}homomorphism between
the boundaries lifts to a noncommutative coarse map.  This is proved
using quasi-central approximate units.

\section{Compactifications as coarse structures}
\label{sec:nc_compactification}

A coarse structure on a locally compact space~\(X\) is a family of
subsets of~\(X\times X\), called \emph{controlled}, subject to some
axioms (see~\cite{Roe:Lectures}).  For instance, a subset
\(E\subseteq X\times X\) is controlled with respect to a
metric~\(d\) on~\(X\) if and only if~\(d|_E\) is bounded.

\begin{example}
  \label{exa:coarse_group}
  Let~\(G\) be a locally compact group and let~\(X\) be a locally
  compact space with a continuous, proper, cocompact action
  of~\(G\).  For instance, we may take \(X=G\) with the action by
  left translation.  There is a unique proper coarse structure
  on~\(X\) for which any controlled subset is contained in a
  \(G\)\nb-invariant controlled subset.  Namely, a subset~\(E\) of
  \(X\times X\) is controlled if and only if it is contained in
  \(E_K \defeq \{(g x, g y) \mid g\in G,\ x,y\in K\}\) for some
  compact subset \(K\subseteq X\).  This is a proper coarse
  structure because~\(X\) is locally compact, and any controlled
  subset is contained in a \(G\)\nb-invariant one.  Conversely, in a
  proper coarse structure on~\(X\) with enough \(G\)\nb-invariant
  controlled sets, the sets \(E_K\subseteq X\times X\) for compact
  \(K\subseteq G\) must be controlled.  And if there were more
  controlled subsets, some non-compact subsets would have to be
  bounded, which is forbidden for proper coarse structures.
\end{example}

If~\(G\) is a finitely generated discrete group, then the coarse
structure in Example~\ref{exa:coarse_group} is the metric coarse
structure for any word-length metric on~\(G\).  If \(G=\R^{2n}\),
then the coarse structure in Example~\ref{exa:coarse_group} is the
metric coarse structure for the Euclidean metric on~\(\R^{2n}\).

The definition of a coarse structure does not carry over to
noncommutative spaces.  Therefore, we shall study compactifications
instead of coarse structures:

\begin{definition}
  \label{def:compactification}
  Let~\(X\) be a locally compact space.  A \emph{compactification}
  of~\(X\) is a compact space~\(\cmp{X}\) with a homeomorphism
  from~\(X\) onto a dense, open subset of~\(\cmp{X}\).
\end{definition}

We will explain below why compactifications are a reasonable
substitute for coarse spaces.  First we discuss the noncommutative
version of compactifications.

\begin{definition}
  \label{def:nc_compactification}
  Let~\(A\) be a \(\Cst\)\nb-algebra.  A (noncommutative)
  \emph{compactification} of~\(A\) is a unital
  \(\Cst\)\nb-algebra~\(\cmp{A}\) with a \Star{}isomorphism
  from~\(A\) onto an essential ideal in~\(\cmp{A}\).  Being
  \emph{essential} means that for any \(a\in \cmp{A}\) with \(a\neq
  0\) there is \(x\in A\) with \(a\cdot x\neq 0\).  The quotient
  \(\bdy{A} \defeq \cmp{A}/A\) is called the boundary or
  \emph{corona algebra} of the compactification.
\end{definition}

The \emph{multiplier algebra} \(\Mult(A)\) of~\(A\) is unital and
contains~\(A\) as an essential ideal, so it is a compactification.
It is the largest compactification of~\(A\):

\begin{lemma}
  \label{lem:compafication_in_MA}
  Let \(A\idealin \cmp{A}\) be a compactification.  The identity map
  on~\(A\) extends uniquely to an isomorphism from~\(\cmp{A}\) onto
  a unital \(\Cst\)\nb-subalgebra of~\(\Mult(A)\) containing~\(A\).
  Conversely, any unital \(\Cst\)\nb-subalgebra of~\(\Mult(A)\)
  containing~\(A\) gives a compactification of~\(A\).  Thus we may
  also define a noncommutative compactification of~\(A\) as a unital
  \(\Cst\)\nb-subalgebra of~\(\Mult(A)\) that contains~\(A\).
\end{lemma}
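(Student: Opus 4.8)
The plan is to realise the inclusion \(\cmp{A}\hookrightarrow\Mult(A)\) through the universal property of the multiplier algebra, so the real content is the standard fact that an essential ideal embeds canonically into its multiplier algebra.

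For the forward direction, I would construct a map \(\Phi\colon\cmp{A}\to\Mult(A)\) directly. Since~\(A\) is an ideal in~\(\cmp{A}\), every \(b\in\cmp{A}\) determines a multiplier of~\(A\) by left and right multiplication, \(a\mapsto b a\) and \(a\mapsto a b\); these land in~\(A\) precisely because~\(A\) is an ideal. Let \(\Phi(b)\in\Mult(A)\) be this multiplier. One checks routinely that~\(\Phi\) is a \Star{}homomorphism, that it restricts to the canonical inclusion \(A\into\Mult(A)\) (an element of~\(A\) acts on~\(A\) by multiplication exactly as it does inside~\(\Mult(A)\)), and that it is unital, since \(1_{\cmp{A}}\) acts as the identity on~\(A\) and hence \(\Phi(1_{\cmp{A}})=1_{\Mult(A)}\).

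Next I would use essentiality to get injectivity: if \(\Phi(b)=0\), then \(b a=0\) for all \(a\in A\), and the essentiality hypothesis forces \(b=0\). An injective \Star{}homomorphism of \(\Cst\)\nb-algebras is isometric, so~\(\Phi\) is an isomorphism onto its image, which is therefore a closed unital \(\Cst\)\nb-subalgebra of~\(\Mult(A)\) containing~\(A\). Uniqueness follows because any \Star{}homomorphism \(\Psi\colon\cmp{A}\to\Mult(A)\) extending the inclusion of~\(A\) satisfies \(\Psi(b)\,a=\Psi(b)\,\Psi(a)=\Psi(ba)=ba\) for \(b\in\cmp{A}\), \(a\in A\), and likewise on the right; thus \(\Psi(b)\) and \(\Phi(b)\) act identically on~\(A\) and so coincide as multipliers.

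For the converse, let \(B\) be a unital \(\Cst\)\nb-subalgebra of~\(\Mult(A)\) containing~\(A\). As~\(A\) is an ideal in~\(\Mult(A)\), the products \(ab\) and \(ba\) lie in~\(A\) for all \(a\in A\), \(b\in B\), so~\(A\) is an ideal in~\(B\); and since~\(A\) is essential in~\(\Mult(A)\), any nonzero \(b\in B\subseteq\Mult(A)\) has \(b a\neq0\) for some \(a\in A\), so~\(A\) is essential in~\(B\) as well. With~\(B\) unital, \(A\idealin B\) is a compactification. I expect no serious obstacle here: the only points needing genuine care are that essentiality is exactly what rules out a nonzero kernel of~\(\Phi\), and that a multiplier is determined by its left and right actions on~\(A\), so that agreement of two multipliers on both actions suffices to identify them.
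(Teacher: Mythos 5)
Your proposal is correct and follows essentially the same route as the paper: you define the map into \(\Mult(A)\) by letting \(b\in\cmp{A}\) act on~\(A\) by multiplication, deduce injectivity from essentiality of the ideal, obtain uniqueness because a multiplier is determined by its action on~\(A\), and settle the converse by noting that~\(A\) is essential in~\(\Mult(A)\) and hence in any intermediate unital subalgebra. The paper's proof is just a more compressed version of the same argument.
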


\begin{proof}
  A \Star{}homomorphism \(\iota\colon \cmp{A}\to\Mult(A)\) that
  extends the canonical inclusion \(A\idealin \Mult(A)\) must map
  \(x\in\cmp{A}\) to the multiplier defined by \(a\mapsto x\cdot
  a\in A\) for \(a\in A\).  This indeed defines a unital
  \Star{}homomorphism~\(\iota\), which is injective because the
  ideal~\(A\) in~\(\cmp{A}\) is essential.  So it
  identifies~\(\cmp{A}\) with a unital \(\Cst\)\nb-subalgebra
  of~\(\Mult(A)\) containing~\(A\).  Conversely, \(A\) is an
  essential ideal in~\(\Mult(A)\) and hence in any unital
  \(\Cst\)\nb-subalgebra of~\(\Mult(A)\) that contains~\(A\).
\end{proof}

\begin{lemma}
  \label{lem:compactification}
  Let~\(X\) be a locally compact space.  There is a bijection
  between isomorphism classes of compactifications of \(X\)
  and\/~\(\Cont_0(X)\).
\end{lemma}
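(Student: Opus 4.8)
The plan is to reduce the topological statement to the algebraic one via Gelfand duality, building on Lemma~\ref{lem:compafication_in_MA}. The key preliminary observation is that every noncommutative compactification of \(A \defeq \Cont_0(X)\) is automatically commutative: by Lemma~\ref{lem:compafication_in_MA} such a compactification embeds as a unital \(\Cst\)\nb-subalgebra of \(\Mult(A)\), and \(\Mult(\Cont_0(X)) \cong \Contb(X)\) is commutative. Hence any compactification \(\cmp{A}\) of \(\Cont_0(X)\) is of the form \(\Cont(Y)\) for a compact Hausdorff space~\(Y\), namely its Gelfand spectrum. This reduction is what makes the correspondence clean, and I would state it first.

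Next I would set up the two assignments. Given a topological compactification \(\cmp{X}\) of~\(X\), restriction of functions identifies \(\Cont_0(X)\) with the ideal \(\{f\in\Cont(\cmp{X}) : f|_{\bdy{X}}=0\}\), where \(\bdy{X}=\cmp{X}\setminus X\); this ideal is essential precisely because~\(X\) is dense in~\(\cmp{X}\) and is an ideal precisely because~\(X\) is open. So \(\Cont(\cmp{X})\) is a compactification of \(\Cont_0(X)\). Conversely, given a compactification \(A\idealin\cmp{A}=\Cont(Y)\), the closed ideal~\(A\) corresponds under the hull--kernel bijection to an open subset \(U\subseteq Y\), with \(A = \{f\in\Cont(Y):f|_{Y\setminus U}=0\}\cong\Cont_0(U)\); essentiality of~\(A\) translates into density of~\(U\) in~\(Y\). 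Since \(\Cont_0(U)\cong\Cont_0(X)\), Gelfand duality for (possibly nonunital) commutative \(\Cst\)\nb-algebras gives a homeomorphism \(U\cong X\), so~\(Y\) with this embedding of~\(X\) is a topological compactification.

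The remaining work is bookkeeping: I would verify that these two assignments are mutually inverse and respect the natural notions of isomorphism. A morphism of topological compactifications is a homeomorphism of the compact spaces restricting to the identity on~\(X\); a morphism of \(\Cst\)\nb-compactifications is a \Star{}isomorphism restricting to the identity on~\(A\). Because Gelfand duality is a contravariant equivalence, these two notions match up, and the induced bijection on isomorphism classes then follows formally once the two constructions above are seen to be inverse to one another (composing them returns \(\cmp{X}\), respectively \(\cmp{A}\), up to the canonical identifications).

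The only genuinely delicate point is the dictionary between ideals and open subsets for a commutative unital \(\Cst\)\nb-algebra: the equivalences ``closed ideal \(\leftrightarrow\) open subset'' and ``essential ideal \(\leftrightarrow\) dense open subset''. Both are standard but deserve care. The first is the hull--kernel correspondence. For the second, one notes that a closed ideal \(I=\Cont_0(U)\) is essential if and only if the only \(f\in\Cont(Y)\) with \(f|_U=0\) is \(f=0\), which is exactly density of~\(U\) in~\(Y\). Everything else reduces to Gelfand duality together with the commutativity observation of the first paragraph.
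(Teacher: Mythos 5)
Your proposal is correct and follows essentially the same route as the paper: both use Lemma~\ref{lem:compafication_in_MA} to embed a compactification of \(\Cont_0(X)\) into \(\Contb(X)\), deduce commutativity, and then apply Gelfand duality together with the correspondence between essential closed ideals and dense open subsets. You merely spell out the hull--kernel dictionary and the mutual inverseness in more detail than the paper does.
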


\begin{proof}
  Let \(X\subset \cmp{X}\) be a compactification of~\(X\).  Then
  \(\Cont(\cmp{X})\) is a unital \(\Cst\)\nb-algebra and extension
  by zero is a \Star{}isomorphism from~\(\Cont_0(X)\) onto an ideal
  in~\(\Cont(\cmp{X})\).  This ideal is essential because~\(X\) is
  dense in~\(\cmp{X}\).
  Conversely, let \(\Cont_0(X)\idealin A\) be a compactification.
  By Lemma~\ref{lem:compafication_in_MA}, we may identify~\(A\) with
  a \(\Cst\)\nb-subalgebra of \(\Mult(\Cont_0(X)) \cong \Contb(X)\),
  so~\(A\) is commutative.  Thus \(A\cong\Cont(\cmp{X})\) for some
  compact space~\(\cmp{X}\).  The ideal \(\Cont_0(X)\idealin A\)
  corresponds to an open subset in~\(\cmp{X}\), together with a
  homeomorphism between this open subset and~\(X\).  This open
  subset in~\(\cmp{X}\) is dense because \(\Cont_0(X)\idealin A\) is
  essential.
\end{proof}

A metric gives rise to a compactification as follows:

\begin{example}
  \label{exa:Higson_compactification}
  Let~\(X\) be a locally compact space and let~\(d\) be a
  continuous, proper metric on~\(X\).  A bounded continuous function
  \(f\colon X\to \C\) \emph{oscillates slowly} if for all \(R>0\)
  and \(\varepsilon>0\) there is a compact subset \(K\subseteq X\)
  such that \(\abs{f(x)-f(y)}<\varepsilon\) if \(d(x,y)\le R\) and
  \(x,y\notin K\); the name ``slowly oscillating'' comes
  from~\cite{Willett:Index_band-dominated}.  The slowly
  oscillating functions form a unital \(\Cst\)\nb-subalgebra of
  \(\Contb(X)=\Mult(\Cont_0(X))\), which contains~\(\Cont_0(X)\).
  Hence they provide a compactification of~\(\Cont_0(X)\).  The
  corresponding compactification of~\((X,d)\) is called \emph{Higson
    compactification}.

  More generally, slowly oscillating can be defined on any coarse
  space.  So any coarse space has a (Higson) compactification.
\end{example}

Let \(X\subseteq \cmp{X}\) be a compactification
of~\(X\) and \(\bdy{X} \defeq \cmp{X}\setminus X\).  Call a subset
\(E\subseteq X\times X\) \emph{controlled} if the closure of~\(E\)
in \(\cmp{X}\times\cmp{X}\) meets the set
\[
\cmp{X}\times\cmp{X} \setminus X\times X
= \cmp{X}\times\bdy{X} \cup \bdy{X}\times\cmp{X}
\]
only in the diagonal; that is, if \((x_i,y_i)_{i\in I}\) is a net
in~\(E\) that converges in \(\cmp{X}\times\cmp{X}\) with \(\lim
x_i\in\bdy{X}\) or \(\lim y_i\in\bdy{X}\), then \(\lim x_i = \lim
y_i\).

\begin{proposition}
  \label{pro:compactification2coarse_metrisable}
  The controlled subsets associated to a compactification
  \(X\subseteq \cmp{X}\) form a coarse structure on~\(X\) whose
  bounded subsets are exactly the relatively compact ones.  The
  coarse structure
  on~\(X\) induced by~\(\cmp{X}\) is proper if \(X\subseteq
  \cmp{X}\) is the Higson compactification of a proper coarse
  structure or if~\(X\) is \(\sigma\)\nb-compact and~\(\bdy{X}\) is
  second countable.  In the second case, the resulting Higson
  compactification is isomorphic to the original
  compactification~\(\cmp{X}\).  In general, the coarse structure
  defined by a compactification is proper if and only if the
  separated, locally closed subsets \(\{(x,x)\mid x\in X\}\) and
  \begin{equation}
    \label{eq:bad_points_for_control}
    \{(x,y)\in \cmp{X}\times \cmp{X} \mid x\neq y
    \text{ and }x\in \bdy{X} \text{ or }y \in \bdy{X}\}
  \end{equation}
  of \(\cmp{X}\times \cmp{X}\) are separated by neighbourhoods.
\end{proposition}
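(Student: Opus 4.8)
The plan is to verify the coarse-structure axioms from the closure description of controlled sets and then treat properness through a controlled neighbourhood of the diagonal. Stability under subsets, transposes and finite unions is immediate, since the coordinate flip of \(\cmp{X}\times\cmp{X}\) and the passage to subsets preserve both the diagonal and the bad set~\eqref{eq:bad_points_for_control}, and \(\cl{E_1\cup E_2}=\cl{E_1}\cup\cl{E_2}\); the structure is connected because a singleton \(\{(x,y)\}\) with \(x,y\in X\) is closed and misses~\eqref{eq:bad_points_for_control}. The one substantial axiom is composition: given controlled \(E_1,E_2\) and a net \((x_i,z_i)\) in \(E_1\circ E_2\) with, say, \(\lim x_i\in\bdy{X}\), I would pick \(y_i\in X\) with \((x_i,y_i)\in E_1\) and \((y_i,z_i)\in E_2\), pass by compactness of~\(\cmp{X}\) to a subnet with \(y_i\to y\), and use controlledness of~\(E_1\) to get \(y=\lim x_i\in\bdy{X}\) and then of~\(E_2\) to get \(\lim z_i=y=\lim x_i\). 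A subset \(S\subseteq X\) is bounded iff \(S\times S\) is controlled; if \(\cl S\subseteq X\) is compact then \(\cl{S\times S}=\cl S\times\cl S\) misses~\eqref{eq:bad_points_for_control}, while conversely, if \(S\) is not relatively compact, a net \(b_i\to\xi\in\bdy{X}\) in~\(S\) together with a fixed \(c\in S\) gives \((b_i,c)\in S\times S\) converging to \((\xi,c)\notin\Delta\), so \(S\times S\) is not controlled.

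Because bounded subsets are exactly the relatively compact ones, a coarse structure of this type is proper precisely when the diagonal has a controlled neighbourhood, so I would next prove the separation criterion. If \(E\) is a controlled neighbourhood of~\(\Delta\), then \(\cl E\) is closed, disjoint from~\eqref{eq:bad_points_for_control}, and contains an open neighbourhood of~\(\Delta\); hence \(\mathrm{int}(\cl E)\) and \(\cmp{X}\times\cmp{X}\setminus\cl E\) are disjoint open sets containing, respectively, \(\Delta\) and the set~\eqref{eq:bad_points_for_control}. Conversely, given disjoint open \(U\supseteq\Delta\) and \(V\) containing~\eqref{eq:bad_points_for_control}, the set \(E\defeq U\cap(X\times X)\) is an open neighbourhood of~\(\Delta\) in \(X\times X\) whose closure lies in \(\cl U\subseteq\cmp{X}\times\cmp{X}\setminus V\); hence \(\cl E\) is disjoint from~\eqref{eq:bad_points_for_control} and meets \(\cmp{X}\times\bdy{X}\cup\bdy{X}\times\cmp{X}\) only in the diagonal, so \(E\) is controlled.

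For the Higson compactification of a proper coarse structure~\(\mathcal E_0\), I would show that every \(\mathcal E_0\)-controlled set~\(E\) is controlled for~\(\cmp{X}\); properness of the induced structure then follows because a controlled neighbourhood of the diagonal for~\(\mathcal E_0\) remains a (topological) neighbourhood and is now also controlled. Let \((x_i,y_i)\in E\) with \(x_i\to\xi\in\bdy{X}\) and \(y_i\to\eta\). The case \(\eta\in X\) is excluded because, choosing a compact neighbourhood \(L\) of~\(\eta\), eventually \(y_i\in L\), so \(x_i\) lies in the \(\mathcal E_0\)-bounded, hence relatively compact, section \(E[L]=\{x: (x,y)\in E\text{ for some }y\in L\}\), contradicting \(x_i\to\bdy{X}\). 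When \(\eta\in\bdy{X}\), slow oscillation of each Higson function~\(f\) over~\(E\) outside large compacta forces \(\lim f(x_i)=\lim f(y_i)\), i.e.\ \(f(\xi)=f(\eta)\) for all slowly oscillating~\(f\); since these separate the points of~\(\cmp{X}\), \(\xi=\eta\).

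For a metrisable boundary, \(\sigma\)-compactness writes \(\bdy{X}=\bigcap_n(\cmp{X}\setminus K_n)\) as a closed \(G_\delta\), hence a zero set, giving continuous \(g\colon\cmp{X}\to[0,1]\) with \(g^{-1}(0)=\bdy{X}\); a metric \(\rho\) on~\(\bdy{X}\) extends to a continuous symmetric \(\tilde\rho\) on \(\cmp{X}\times\cmp{X}\), and I would form the continuous pseudometric \(D(x,y)\defeq\tilde\rho(x,y)+\abs{g(x)-g(y)}\), whose point is that \(D(\xi,\eta)=0\) with \(\xi\in\bdy{X}\) forces \(g(\eta)=0\) and \(\rho(\xi,\eta)=0\), so \(\xi=\eta\). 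Then the tapering set \(W\defeq\{(x,y)\in X\times X: D(x,y)<\min(g(x),g(y))\}\) is an open controlled neighbourhood of the diagonal, so the structure is proper. For the identification of the boundary I would first show that each \(\omega\in\bdy{X}\) has the countable neighbourhood basis \(\{g<1/n\}\cap\{\tilde\rho(\blank,\omega)<1/m\}\) — any limit point of a sequence chosen from these sets lies in~\(\bdy{X}\) and has \(\rho\)-distance~\(0\) from~\(\omega\) — and then that the slowly oscillating functions are exactly \(\Cont(\cmp{X})\): continuity on~\(\cmp{X}\) yields slow oscillation as in the general argument, while a slowly oscillating~\(f\) that failed to extend continuously at some~\(\omega\) would, by this first countability, give sequences \(x_n,y_n\to\omega\) with \(\lim f(x_n)\neq\lim f(y_n)\), whereupon \(E=\{(x_n,y_n)\}\) is controlled yet escapes every compactum, contradicting slow oscillation. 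I expect this last boundary analysis to be the crux: extracting~\(g\) from \(\sigma\)-compactness, folding it together with~\(\rho\) into a pseudometric that detects the boundary diagonal, and proving first countability along~\(\bdy{X}\) so that sequences suffice; the axioms and the separation criterion are then routine manipulations of closures.
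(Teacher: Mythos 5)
Your proposal is correct in substance, and for the two routine parts---the coarse-structure axioms together with the identification of bounded sets, and the equivalence of properness with separating the diagonal from the set~\eqref{eq:bad_points_for_control}---it matches the paper's argument (the paper's separation criterion is essentially your closure argument, and it delegates the axioms to Roe, which you instead verify by hand). Where you genuinely diverge is the case of second countable boundary. The paper passes to a second countable quotient compactification \(\cmp{X'}\) of~\(\cmp{X}\) (Lemma~\ref{lem:convergence} and Lemma~\ref{lem:controlled nbd}), pulls back a metric from~\(\cmp{X'}\), and uses the fact that a net in~\(X\) whose image converges to a boundary point of~\(\cmp{X'}\) already converges in~\(\cmp{X}\); the same lemma also yields that boundary points are limits of sequences, which feeds into the identification of~\(\cmp{X}\) with the Higson compactification. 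You instead work directly on~\(\cmp{X}\): you manufacture \(g\in\Cont(\cmp{X})\) with \(g^{-1}(0)=\bdy{X}\) from \(\sigma\)\nb-compactness, Tietze-extend the boundary metric to~\(\tilde\rho\), and use \(D=\tilde\rho+\abs{g(x)-g(y)}\) both to build the tapering controlled neighbourhood~\(W\) and to get a countable neighbourhood basis at each boundary point. This is more self-contained (no detour through a quotient compactification) and delivers the same conclusions; your treatment of the Higson case via the sections \(E[L]\) is likewise a legitimate filling-in of a step the paper only asserts.

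One step as written does fail and needs a one-line repair: the Tietze extension~\(\tilde\rho\) of~\(\rho\) from \(\bdy{X}\times\bdy{X}\) carries no information on the diagonal of~\(X\), so there is no reason that \(\tilde\rho(x,x)=0\) for \(x\in X\); hence \(D(x,x)=\tilde\rho(x,x)\) may exceed \(\min(g(x),g(x))=g(x)\), in which case your set~\(W\) fails to contain \((x,x)\) and is not a neighbourhood of the diagonal. Replace \(\tilde\rho(x,y)\) by \(\max\bigl(0,\tilde\rho(x,y)-\tfrac12\tilde\rho(x,x)-\tfrac12\tilde\rho(y,y)\bigr)\): this is still continuous and symmetric, it agrees with~\(\rho\) on \(\bdy{X}\times\bdy{X}\) because \(\rho\) vanishes on the boundary diagonal, and it now vanishes on the whole diagonal of~\(\cmp{X}\), so \(W\) becomes an open neighbourhood of the diagonal and the rest of your argument (including the neighbourhood-basis claim, which only uses \(\tilde\rho(\omega,\omega)=0\) for \(\omega\in\bdy{X}\)) goes through. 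A cosmetic point: \(D\) is not a pseudometric---no triangle inequality survives the Tietze extension---but nothing in your argument uses one.
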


\begin{proof}
  All this would follow from \cite{Roe:Lectures}*{Theorem~2.27 and
    Proposition~2.48}, but the assumptions made there do not suffice
  for the proof.  The correction in~\cite{Roe:Correction_Lectures}
  explains that the theorem works if~\(\cmp{X}\) is second
  countable.  The main issue is whether there is a controlled
  neighbourhood of the diagonal.  Lemma~\ref{lem:controlled nbd}
  shows this if~\(X\) is \(\sigma\)\nb-compact and~\(\bdy{X}\) is
  second countable; then~\(X\) is paracompact as assumed
  throughout~\cite{Roe:Lectures}.  All functions
  in~\(\Cont(\cmp{X})\) are slowly oscillating for the coarse
  structure defined by~\(\cmp{X}\).  The Higson compactification is
  canonically isomorphic to the original
  compactification~\(\cmp{X}\) if and only if all slowly oscillating
  functions belong to~\(\Cont(\cmp{X})\).  The proof
  in~\cite{Roe:Lectures} that this is so if~\(\cmp{X}\) is second
  countable goes through under our assumptions because any point
  in~\(\bdy{X}\) is a limit of a sequence in~\(X\) by
  Lemma~\ref{lem:convergence}.\ref{en:convergence3}.

  If~\(\cmp{X}\) is the Higson
  compactification of a proper coarse structure, then all controlled
  subsets of the original coarse structure remain controlled in the
  coarse structure induced by~\(\cmp{X}\).  Hence a controlled
  neighbourhood for the original coarse structure remains controlled
  for the new coarse structure, so that the latter is again proper.

  The subset of \(\cmp{X}\times\cmp{X}\)
  in~\eqref{eq:bad_points_for_control} and \(\{(x,x)\mid x\in X\}\)
  are separated, that is, each is disjoint from the other's closure.
  Being separated by neighbourhoods means that there are disjoint
  open subsets \(V\) and~\(U\) of \(\cmp{X}\times\cmp{X}\) that
  contain them.  Then \(U\cap X\times X\) is a controlled
  neighbourhood of the diagonal in~\(X\times X\).  Conversely, a
  controlled neighbourhood of the diagonal in~\(X\times X\) contains
  an open controlled neighbourhood~\(U\).  Thus \(\cl{U}\cap
  \cmp{X}\times\cmp{X} \setminus X\times X\) is contained in the
  diagonal, so that the open subset \(V\defeq
  (\cmp{X}\times\cmp{X})\setminus \cl{U}\) contains the subset
  in~\eqref{eq:bad_points_for_control}.  So the open subsets \(U\)
  and~\(V\) separate our two subsets.
\end{proof}

\begin{proposition}[\cite{Roe:Lectures}*{Proposition~2.47}]
  \label{pro:metric_coarse_compactification_equivalent}
  Let~\(d\) be a continuous proper metric on~\(X\).  Build its
  Higson compactification as in
  Example~\textup{\ref{exa:Higson_compactification}}.  A subset
  \(E\subseteq X\times X\) is controlled with respect to the Higson
  compactification if and only if~\(d|_E\) is bounded.
\end{proposition}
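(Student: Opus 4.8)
The plan is to prove the two implications separately, using the description of the Higson compactification~\(\cmp{X}\) as the spectrum of the commutative \(\Cst\)\nb-algebra of slowly oscillating functions. In particular, every slowly oscillating function extends to a continuous function on~\(\cmp{X}\), these extensions exhaust~\(\Cont(\cmp{X})\), and hence two points of~\(\cmp{X}\) coincide if and only if every slowly oscillating function takes the same value on them. I will also use repeatedly that properness of~\(d\) means that closed bounded subsets are compact; so, fixing a basepoint \(o\in X\), a net in~\(X\) converges to a point of~\(\bdy{X}\) if and only if it eventually leaves every ball \(\bar B(o,r)\), that is, \(d(o,\blank)\to\infty\) along the net.

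For the implication ``\(d|_E\) bounded \(\Rightarrow\) \(E\) controlled'', suppose \(d(x,y)\le R\) for all \((x,y)\in E\) and let \((x_i,y_i)\) be a net in~\(E\) converging in \(\cmp{X}\times\cmp{X}\) with, say, \(x_\infty\defeq\lim x_i\in\bdy{X}\). I would first show \(y_\infty\defeq\lim y_i\in\bdy{X}\) as well: otherwise \(y_\infty\in X\), so \(y_i\to y_\infty\) in~\(X\), and the bound \(d(x_i,y_i)\le R\) together with properness confines the~\(x_i\) to a compact set, contradicting \(x_\infty\in\bdy{X}\). With both limits on the boundary, both nets leave every compact set; so for a slowly oscillating~\(f\) and \(\varepsilon>0\), the compact set~\(K\) from the slow-oscillation condition at scale~\(R\) eventually excludes \(x_i\) and~\(y_i\), whence \(\abs{f(x_i)-f(y_i)}<\varepsilon\) eventually and \(f(x_\infty)=f(y_\infty)\). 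As~\(f\) is arbitrary, \(x_\infty=y_\infty\), so~\(E\) is controlled.

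For the converse I argue by contraposition: if \(d|_E\) is unbounded, pick \((x_n,y_n)\in E\) with \(d(x_n,y_n)\to\infty\) and, by compactness of \(\cmp{X}\times\cmp{X}\), pass to a subnet converging to \((x_\infty,y_\infty)\). Both points cannot lie in a common compact subset of~\(X\) (there~\(d\) would be bounded), so at least one lies in~\(\bdy{X}\); to contradict controlledness I must arrange \(x_\infty\ne y_\infty\). If one of the two limits already lies in~\(X\) and the other in~\(\bdy{X}\), they are automatically distinct; the only substantial case is \(x_\infty,y_\infty\in\bdy{X}\), in which both sequences escape to infinity. Here I would construct a slowly oscillating~\(f\) with \(f(x_\infty)\ne f(y_\infty)\), which forces \(x_\infty\ne y_\infty\).

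The construction is the main obstacle. I would pass to a sparse subsequence, chosen inductively using properness, so that the pairs \((x_{n_k},y_{n_k})\) escape every ball around~\(o\) and are mutually well-separated, and set \(f=\sup_k h_k\) with Lipschitz bumps \(h_k(x)=\max\bigl(0,1-r_k^{-1}d(x,x_{n_k})\bigr)\) whose radii \(r_k\to\infty\) are taken small enough that each support \(\bar B(x_{n_k},r_k)\) is disjoint from the bounded region holding all earlier data and does not contain~\(y_{n_k}\); this is possible because \(d(o,x_{n_k})\to\infty\) and because \(r_k<d(x_{n_k},y_{n_k})\) keeps \(h_k(y_{n_k})=0\). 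Then \(f(x_{n_k})=1\) and \(f(y_{n_k})=0\) for all~\(k\), while the slopes \(r_k^{-1}\to0\) and the fact that, outside a large compact set depending on~\(R\), the supports are pairwise more than~\(R\) apart make \(\Delta_R f\) vanish at infinity for every~\(R\); thus~\(f\) is slowly oscillating. Since \(f\) takes the value~\(1\) at each~\(x_{n_k}\) and~\(0\) at each~\(y_{n_k}\), the limits along the subnet differ, so~\(f\) separates \(x_\infty\) and~\(y_\infty\); hence \(x_\infty\ne y_\infty\) and~\(E\) is not controlled. The delicate point throughout is balancing \(r_k\to\infty\), needed for slow oscillation, against keeping the supports disjoint and clear of the~\(y_{n_k}\), which is exactly what the inductive sparse choice secures.
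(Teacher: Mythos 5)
The paper gives no proof of this proposition at all: it is quoted verbatim from Roe's \emph{Lectures on Coarse Geometry} (Proposition~2.47), so there is nothing in the source to compare against line by line. Your argument is essentially the standard one (and essentially Roe's): the easy direction via the slow-oscillation condition at scale~\(R\), and the converse via a Lipschitz-bump Higson function with growing radii and decaying slopes that takes the value~\(1\) on a sparse subsequence of the \(x_n\) and~\(0\) on the corresponding \(y_n\). The construction of~\(f\) is sound, including the points you correctly identify as delicate (local finiteness and eventual \(R\)\nb-separation of the supports, \(r_k^{-1}R<\varepsilon\) outside a compact set).

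One bookkeeping slip is worth fixing. You first pass to a subnet converging to \((x_\infty,y_\infty)\) and only afterwards extract the sparse subsequence \((x_{n_k},y_{n_k})\) used to build~\(f\); the subnet and the subsequence are then unrelated, so ``\(f(x_{n_k})=1\) and \(f(y_{n_k})=0\) for all~\(k\)'' does not yield \(f(x_\infty)=1\ne 0=f(y_\infty)\) for \emph{that} \((x_\infty,y_\infty)\). Reverse the order: split cases according to whether \((x_n)\) or \((y_n)\) has a bounded subsequence (which lands you in your easy mixed case), and otherwise build~\(f\) first and then take any accumulation point of \(\{(x_{n_k},y_{n_k})\}\) in \(\cmp{X}\times\cmp{X}\). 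Since this set is contained in the closed set \(\tilde f^{-1}(1)\times\tilde f^{-1}(0)\) for the continuous extension~\(\tilde f\) and escapes every compact subset of \(X\times X\), every such accumulation point lies off the diagonal and touches \(\bdy{X}\), so~\(E\) is not controlled. The same reordering also justifies your implicit claim that both \(x_{n_k}\) and \(y_{n_k}\) can be chosen to escape to infinity. With that repair the proof is complete.
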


Proposition~\ref{pro:metric_coarse_compactification_equivalent}
shows that the coarse structure and the Higson compactification
associated to a proper metric on a space~\(X\) contain the same
information, that is, one determines the other uniquely.  There are
coarse structures that do not come from any compactification.  But
the most important examples are those defined by metrics or by
compactifications with metrisable boundary.  In both cases, the
coarse structure and the compactification determine each other by
Propositions \ref{pro:compactification2coarse_metrisable} and
\ref{pro:metric_coarse_compactification_equivalent}.  Hence the
following definition seems legitimate:

\begin{definition}
  \label{def:nc_space}
  A \emph{noncommutative coarse space} is a
  \(\Cst\)\nb-algebra~\(A\) with a compactification
  \(A\idealin \cmp{A}\).
\end{definition}

The following example shows that any unital \(\Cst\)\nb-algebra
appears as a corona algebra in some noncommutative coarse space:

\begin{example}
  \label{exa:standard_coarse_space}
  Let~\(B\) be a unital \(\Cst\)\nb-algebra.  The \emph{cone
    over~\(B\)} is the noncommutative coarse space \(\Cont_0(\N,B)
  \idealin \Cont(\N^+,B)\).  Its corona algebra
  \(\Cont(\N^+,B)\mathbin{/}\Cont_0(\N,B)\) is naturally isomorphic
  to~\(B\) by evaluation at~\(\infty\).
\end{example}

\begin{definition}
  \label{def:translation}
  Let \(A\idealin \cmp{A}\) be a noncommutative coarse space.  An
  automorphism \(\alpha\in\Aut(A)\) is a \emph{translation} if its
  canonical extension to~\(\Mult(A)\) maps \(\cmp{A}\subseteq
  \Mult(A)\) to itself and induces the identity map on \(\bdy{A}
  \defeq \cmp{A}/A\).
\end{definition}

\begin{example}
  \label{exa:nc_compactify_group}
  Let~\(G\) be a locally compact group and let~\(A\) be a
  \(\Cst\)\nb-algebra with a continuous action~\(\alpha\) of~\(G\).  Let
  \[
  \cmp{A}_G \defeq
  \{x\in \Mult(A) \mid \alpha_g(x) -x \in A\text{ for all }g\in G\}.
  \]
  This is a compactification of~\(A\), and it is the
  largest one such that~\(G\) acts by translations, that is, it acts
  trivially on \(\bdy{A}_G \defeq \cmp{A}_G/A\).  Since~\(G\) acts
  continuously on \(A\) and~\(\bdy{A}_G\), it also acts continuously
  on~\(\cmp{A}_G\) by~\cite{Brown:Continuity}.
\end{example}

\begin{lemma}
  \label{lem:Higson_group}
  Let \(A=\Cont_0(G)\) with the \(G\)\nb-action by right
  translation.  Then \(\cmp{A}_G\subseteq \Contb(G)\) is the
  \(\Cst\)\nb-algebra of slowly oscillating functions for the coarse
  structure on~\(G\) in Example~\textup{\ref{exa:coarse_group}}.
\end{lemma}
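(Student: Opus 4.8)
The plan is to write out each side as an explicit condition on a function \(f\in\Contb(G)=\Mult(\Cont_0(G))\) and then match them. For the right translation action \((\alpha_g f)(u)=f(ug)\) the definition in Example~\ref{exa:nc_compactify_group} reads
\[
\cmp{A}_G=\{f\in\Contb(G)\mid u\mapsto f(ug)-f(u)\ \text{belongs to}\ \Cont_0(G)\ \text{for every}\ g\in G\}.
\]
On the coarse side, the controlled sets of Example~\ref{exa:coarse_group} are exactly the subsets of \(G\times G\) on which \((u,v)\mapsto u^{-1}v\) stays relatively compact, because \(E_K=\{(gx,gy)\}=\{(u,v)\mid u^{-1}v\in K^{-1}K\}\). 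Substituting \(v=uc\), the slowly oscillating condition of Example~\ref{exa:Higson_compactification} for this coarse structure becomes: for every compact \(C\subseteq G\) and every \(\varepsilon>0\) there is a compact \(L\subseteq G\) with \(\sup_{c\in C}\abs{f(uc)-f(u)}<\varepsilon\) for all \(u\notin L\). So the lemma asserts the equivalence of these two conditions, and part of the point is that the right translation action is paired with the \emph{left}\nb-invariant coarse structure of Example~\ref{exa:coarse_group}. One inclusion is then immediate: if \(f\) oscillates slowly, applying the condition to the controlled set \(\{(u,ug)\mid u\in G\}\) (controlled since \(u^{-1}(ug)=g\)) gives \(f(ug)-f(u)\to0\) as \(u\to\infty\), that is \(f(\cdot\,g)-f\in\Cont_0(G)\), so \(f\in\cmp{A}_G\).

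The substance is the reverse inclusion, where I must upgrade the \emph{pointwise} control encoded in \(\cmp{A}_G\) (one element \(g\) at a time) to control that is \emph{uniform} over a compact set \(C\) of translations. The extra ingredient is that, by Example~\ref{exa:nc_compactify_group} together with \cite{Brown:Continuity}, the group \(G\) acts \emph{continuously} on the \(\Cst\)\nb-algebra \(\cmp{A}_G\); hence for \(f\in\cmp{A}_G\),
\[
\norm{\alpha_g f-f}_\infty=\sup_{u}\abs{f(ug)-f(u)}\longrightarrow 0 \quad\text{as}\quad g\to e.
\]
Given this equicontinuity, I would fix \(C\) and \(\varepsilon>0\), choose a neighbourhood \(V\) of \(e\) with \(\norm{\alpha_v f-f}_\infty<\varepsilon/2\) for \(v\in V\), and cover \(C\) by finitely many translates \(c_1V,\dots,c_nV\). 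Writing \(c=c_jv\) and splitting
\[
f(uc)-f(u)=\bigl(f(uc_jv)-f(uc_j)\bigr)+\bigl(f(uc_j)-f(u)\bigr),
\]
the first term is at most \(\norm{\alpha_v f-f}_\infty<\varepsilon/2\) uniformly in \(u\), while the second tends to \(0\) as \(u\to\infty\) by membership in \(\cmp{A}_G\) applied to each of the finitely many \(c_j\). Taking \(L\) to be the union of the finitely many associated compacta yields \(\sup_{c\in C}\abs{f(uc)-f(u)}<\varepsilon\) for \(u\notin L\), so \(f\) oscillates slowly.

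The main obstacle is precisely this uniformisation. Pointwise slow oscillation does not obviously self\nb-improve to the uniform statement, since the translates \(c\in C\) cannot be handled one at a time with a single compact exceptional set; it is the norm\nb-continuity of the action on \(\cmp{A}_G\)—a property \emph{not} shared by the full multiplier algebra \(\Contb(G)\)—that bridges the gap, by letting a finite subcover of \(C\) reduce the uniform estimate to finitely many pointwise ones plus a single equicontinuity bound.
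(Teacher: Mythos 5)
Your proposal is correct and takes essentially the same route as the paper: the easy direction is the specialisation \(y=1\) of the uniform oscillation condition, and the substantive converse rests, exactly as in the paper, on the automatic norm\nb-continuity of the \(G\)\nb-action on \(\cmp{A}_G\) from \cite{Brown:Continuity} cited in Example~\ref{exa:nc_compactify_group}. The only difference is in how that continuity is converted into uniformity over a compact set \(C\): the paper applies the Arzel\`a--Ascoli Theorem to the compact image of \(x\mapsto \alpha_x f-f\) in \(\Cont_0(G)\subseteq\Cont(G^+)\), whereas you cover \(C\) by finitely many translates \(c_jV\) of a neighbourhood of the identity on which \(\norm{\alpha_v f-f}_\infty<\varepsilon/2\) and conclude by an \(\varepsilon/2\)\nb-argument --- two standard packagings of the same compactness step.
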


\begin{proof}
  Let \(f\in\Contb(G)\).  By definition, \(f\) oscillates slowly for
  the coarse structure of Example~\ref{exa:coarse_group} if and only
  if \(\abs{f(gx) - f(gy)}\to0\) for \(g\to\infty\), uniformly
  for~\(x,y\) in any compact subset of~\(G\).  Replacing \(g\)
  by~\(gy\), we see that we may as well take \(y=1\).  Let
  \(\alpha_x f(g) \defeq f(g x)\).  We may rewrite the slow
  oscillation condition for fixed~\(x\) as \(\alpha_x f - f\in
  \Cont_0(G)\).  Thus all slowly oscillating functions belong to
  \(\cmp{\Cont_0(G)}_G\).  For the converse, we use the automatic
  continuity of the \(G\)\nb-action on~\(\cmp{A}_G\), which follows
  from~\cite{Brown:Continuity}.  If \(f\in\cmp{\Cont_0(G)}_G\), then
  the map \(x\mapsto \alpha_x f -f\) is continuous and hence maps a
  compact subset of~\(G\) to a compact subset~\(K\) of
  \(\Cont_0(G)\subseteq \Cont(G^+)\).  By the Arzel\`a--Ascoli
  Theorem, the functions \(\alpha_x f -f\) for \(x\in K\) are
  uniformly continuous on the one-point compactification~\(G^+\).
  Thus they vanish uniformly at infinity.  This means that~\(f\)
  oscillates slowly.
\end{proof}

\section{Rieffel deformation of coarse structures}
\label{sec:Rieffel_deform}

Our main examples of noncommutative coarse spaces are Rieffel
deformations of commutative coarse spaces.  We shall use Kasprzak's
description of Rieffel deformations
in~\cite{Kasprzak:Rieffel_deformation} because it simplifies the
proof of functorial properties and is slightly more general.
Mostly, we may treat Rieffel deformation as a black box and use only
some functorial properties listed below.  This may help to extend
the theory to other situations.  A candidate are the deformations
in~\cite{Bieliavsky-Gayral:Deformation_Kaehler}; but we have not
checked whether the following works in that situation.

Let~\(G\) be an Abelian, locally compact group and let~\(\Psi\) be a
continuous \(2\)\nb-cocycle on~\(G\).  A \(G\)\nb-\(\Cst\)-algebra
is a \(\Cst\)\nb-algebra with a (strongly) continuous action
of~\(G\) by automorphisms.  Rieffel deformation with respect
to~\(\Psi\) maps a \(G\)\nb-\(\Cst\)-algebra~\(A\) to another
\(G\)\nb-\(\Cst\)-algebra~\(A^\Psi\).  This construction has the
following properties:
\begin{enumerate}[label=\textup{(RD\arabic*)}]
\item \label{enum:RD1} Rieffel deformation is functorial for
  \(G\)\nb-equivariant \Star{}homomorphisms and morphisms
  (nondegenerate \Star{}homomorphisms to multiplier algebras); this
  functor preserves injectivity of morphisms, see
  \cite{Kasprzak:Rieffel_deformation}*{Proposition~3.8}.
\item \label{enum:RD2} Rieffel deformation is exact, that is, it
  maps an extension of \(G\)\nb-\(\Cst\)\nb-algebras again to an
  extension of \(G\)\nb-\(\Cst\)\nb-algebras, see
  \cite{Kasprzak:Rieffel_deformation}*{Theorem~3.9}.
\item \label{enum:RD3} If~\(G\) acts trivially on~\(A\), then
  \(A^\Psi=A\); this is not stated
  in~\cite{Kasprzak:Rieffel_deformation}, but easy to prove, see
  Lemma~\ref{lem:Rieffel_trivial} below.
\item \label{enum:RD4} If \(\Psi_1,\Psi_2\) are two cocycles then
  \((A^{\Psi_1})^{\Psi_2} = A^{\Psi_1\Psi_2}\) with the product
  cocycle \(\Psi_1 \Psi_2\), see
  \cite{Kasprzak:Rieffel_deformation}*{Lemma~3.5}.
\item \label{enum:RD5} If~\(\Psi\) is the unit cocycle
  (constant~\(1\)), then \(A^\Psi = A\); this is trivial.
\end{enumerate}

\begin{theorem}
  \label{the:Rieffel_deform_action}
  Let \(A\idealin \cmp{A}\) be a noncommutative coarse space,
  equipped with a continuous action of~\(G\); that is, \(G\) acts
  continuously on~\(\cmp{A}\), leaving the ideal~\(A\) invariant.
  Then \(A^\Psi\idealin \cmp{A}{}^\Psi\) is a noncommutative coarse
  space, and its corona algebra is the Rieffel
  deformation~\(\bdy{A}^\Psi\).
\end{theorem}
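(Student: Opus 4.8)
The plan is to run the defining extension of the compactification through the functorial properties \ref{enum:RD1}--\ref{enum:RD5} of Rieffel deformation, and then to check separately the two conditions that make $A^\Psi \idealin \cmp{A}{}^\Psi$ a compactification in the sense of Definition~\ref{def:nc_compactification}: that $\cmp{A}{}^\Psi$ is unital and that the ideal $A^\Psi$ is essential.

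First I would note that the hypotheses give a genuine $G$-equivariant extension. Since $G$ acts continuously on $\cmp{A}$ and leaves $A$ invariant, it also acts continuously on the quotient $\bdy{A} = \cmp{A}/A$, so that
\[
A \into \cmp{A} \prto \bdy{A}
\]
is an extension of $G$-$\Cst$-algebras. Applying exactness \ref{enum:RD2} turns this into an extension
\[
A^\Psi \into \cmp{A}{}^\Psi \prto \bdy{A}^\Psi,
\]
which exhibits $A^\Psi$ as an ideal in $\cmp{A}{}^\Psi$ with quotient $\cmp{A}{}^\Psi/A^\Psi \cong \bdy{A}^\Psi$. This already yields the asserted description of the corona algebra.

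To see that $\cmp{A}{}^\Psi$ is unital, I would deform the unital inclusion of scalars. The unit of $\cmp{A}$ is fixed by every $\alpha_g$, so $\C \to \cmp{A}$, $\lambda \mapsto \lambda\cdot 1$, is a $G$-equivariant \Star{}homomorphism for the trivial action on~$\C$. By \ref{enum:RD3} we have $\C^\Psi = \C$, and functoriality \ref{enum:RD1} deforms this inclusion to a \Star{}homomorphism $\C \to \cmp{A}{}^\Psi$; let $u \in \cmp{A}{}^\Psi$ denote the image of~$1$. Regarding the same map as a morphism into $\Mult(\cmp{A}{}^\Psi)$ and using that a nondegenerate \Star{}homomorphism out of~$\C$ must send~$1$ to the unit of the target multiplier algebra, I would identify $u$ with $1_{\Mult(\cmp{A}{}^\Psi)}$. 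As $u \in \cmp{A}{}^\Psi$, this shows that $\cmp{A}{}^\Psi$ is unital.

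Essentiality is where I expect the real work. By Lemma~\ref{lem:compafication_in_MA}, $A$ is essential in $\cmp{A}$ precisely when the canonical morphism $\cmp{A} \to \Mult(A)$ extending $A \into \Mult(A)$ is injective. This morphism is $G$-equivariant, so \ref{enum:RD1} deforms it to a morphism $\cmp{A}{}^\Psi \to \Mult(A^\Psi)$, and because Rieffel deformation preserves injectivity of morphisms, the deformed map is again injective. By Lemma~\ref{lem:compafication_in_MA} applied to $A^\Psi \idealin \cmp{A}{}^\Psi$, this is exactly the statement that $A^\Psi$ is essential. The delicate points are to confirm that the morphism functoriality in \ref{enum:RD1} really lands in $\Mult(A^\Psi)$ rather than in some other deformation of $\Mult(A)$, and that it is compatible with the inclusion $A^\Psi \into \cmp{A}{}^\Psi$ produced by exactness; here one should keep in mind that the extended $G$-action on $\Mult(A)$ need only be strictly continuous, so it is the morphism framework (source $\cmp{A}$ with continuous action, target the multiplier algebra) that is being used, not a continuous action on all of $\Mult(A)$. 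Once these compatibilities are secured, the three ingredients combine to identify $A^\Psi \idealin \cmp{A}{}^\Psi$ as a noncommutative coarse space with corona $\bdy{A}^\Psi$.
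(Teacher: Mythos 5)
Your proposal is correct and follows essentially the same route as the paper: encode the compactification as an extension together with an injective nondegenerate morphism $\cmp{A}\to\Mult(A)$, apply \ref{enum:RD1} and \ref{enum:RD2} to deform the extension and preserve injectivity (hence essentiality), and deform the equivariant unital map $\C\to\cmp{A}$ using \ref{enum:RD3} to get unitality of $\cmp{A}{}^\Psi$. The compatibility caveats you raise at the end are exactly the points the paper's proof also relies on implicitly.
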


\begin{proof}
  The ideal inclusion \(A\idealin
  \cmp{A}\) is equivalent to a morphism \(\cmp{A}\to A\), that is,
  to a nondegenerate \Star{}homomorphism \(\cmp{A}\to\Mult(A)\);
  this morphism is injective if and only if~\(A\) is an essential
  ideal in~\(\cmp{A}\), compare Lemma~\ref{lem:compafication_in_MA}.
  Thus a noncommutative coarse space is the same as an extension
  of \(\Cst\)-algebras \(A\into \cmp{A} \prto \bdy{A}\) such
  that~\(\cmp{A}\) is unital and the resulting morphism \(\cmp{A}\to
  A\) is injective.

  By assumption, the extension \(A\into \cmp{A} \prto \bdy{A}\) is
  an extension of \(G\)-\(\Cst\)-algebras and the injective morphism
  \(\cmp{A}\to A\) is \(G\)\nb-equivariant.  By \ref{enum:RD1}
  and~\ref{enum:RD2}, Rieffel deformation maps this to an extension
  \(A^\Psi\into \cmp{A}{}^\Psi \prto \bdy{A}^\Psi\), such that the
  resulting morphism \(\cmp{A}{}^\Psi\to A^\Psi\) is injective.  That
  is, \(A^\Psi\) is an essential ideal in~\(\cmp{A}{}^\Psi\), and the
  corona algebra \(\cmp{A}{}^\Psi/A^\Psi\) is the Rieffel
  deformation~\(\bdy{A}^\Psi\) of~\(\bdy{A}\).  An
  algebra~\(\cmp{A}\) is unital if and only if there is a
  nondegenerate \Star{}homomorphism \(\C\to \cmp{A}\), which is
  equivariant for the trivial action on~\(\C\).  Rieffel deformation
  maps this to a nondegenerate \Star{}homomorphism \(\C^\Psi\to
  \cmp{A}{}^\Psi\).  Since \(\C^\Psi=\C\) by~\ref{enum:RD3},
  \(\cmp{A}{}^\Psi\) is unital.
\end{proof}

\begin{theorem}
  \label{the:Rieffel_deform_translation_action}
  Let \(A\idealin \cmp{A}\) be a noncommutative coarse space and
  let~\(G\) act on~\(A\) continuously and by translations.
  Then~\(G\) acts continuously on~\(\cmp{A}\).  And \(A^\Psi\idealin
  \cmp{A}{}^\Psi\) is a noncommutative coarse space with the corona
  algebra \(\bdy{A}^\Psi = \bdy{A}\), and \(G\) acts on~\(A^\Psi\)
  by translations.
\end{theorem}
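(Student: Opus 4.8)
The plan is to assemble the claim from Theorem~\ref{the:Rieffel_deform_action} together with the functorial properties of Rieffel deformation, the only genuinely delicate input being the continuity of the action on~\(\cmp{A}\). First I would show that \(G\) acts continuously on~\(\cmp{A}\). Since \(G\) acts on~\(A\) by translations, each~\(\alpha_g\) extends to an automorphism of~\(\Mult(A)\) that maps~\(\cmp{A}\) into itself and induces the identity on \(\bdy{A}=\cmp{A}/A\); in particular \(\alpha_g(x)-x\in A\) for all \(x\in\cmp{A}\) and \(g\in G\). Hence \(\cmp{A}\) is a \(G\)\nb-invariant unital \(\Cst\)\nb-subalgebra of the largest translation compactification~\(\cmp{A}_G\) of Example~\ref{exa:nc_compactify_group}. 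The action on~\(\cmp{A}_G\) is continuous by that example (which rests on~\cite{Brown:Continuity}), so its restriction to the invariant subalgebra~\(\cmp{A}\) is continuous as well, and it leaves~\(A\) invariant. This is the step I expect to be the main obstacle, since continuity of group actions on multiplier-type algebras is subtle; it is exactly here that we must borrow the automatic-continuity result of Brown rather than argue by hand.

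With continuity in hand, \(G\) acts continuously on~\(\cmp{A}\) leaving the ideal~\(A\) invariant, so Theorem~\ref{the:Rieffel_deform_action} applies and shows that \(A^\Psi\idealin \cmp{A}{}^\Psi\) is a noncommutative coarse space whose corona algebra is the Rieffel deformation~\(\bdy{A}^\Psi\). To identify this corona with~\(\bdy{A}\), I would invoke that ``by translations'' means precisely that the induced action on~\(\bdy{A}\) is trivial; then~\ref{enum:RD3}, in the form of Lemma~\ref{lem:Rieffel_trivial}, identifies the Rieffel deformation of this trivial action with the action itself, giving \(\bdy{A}^\Psi=\bdy{A}\) as \(G\)\nb-\(\Cst\)-algebras, so that the deformed action on this quotient is again trivial.

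Finally I would verify that the deformed action on~\(A^\Psi\) is by translations in the sense of Definition~\ref{def:translation}. By~\ref{enum:RD1} and~\ref{enum:RD2} the extension \(A^\Psi\into \cmp{A}{}^\Psi\prto \bdy{A}^\Psi\) is \(G\)\nb-equivariant, so the deformed action on~\(\cmp{A}{}^\Psi\) restricts to that on~\(A^\Psi\) and induces that on~\(\bdy{A}^\Psi\). The \(G\)\nb-equivariant morphism \(\cmp{A}{}^\Psi\to\Mult(A^\Psi)\) of Lemma~\ref{lem:compafication_in_MA} identifies this action with the restriction of the canonical extension of the \(A^\Psi\)\nb-action to~\(\Mult(A^\Psi)\); hence that extension maps~\(\cmp{A}{}^\Psi\) to itself. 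Since the induced action on~\(\bdy{A}^\Psi=\bdy{A}\) is trivial by the previous paragraph, the extension induces the identity on the boundary, and Definition~\ref{def:translation} gives that \(G\) acts on~\(A^\Psi\) by translations. The bulk of the argument is thus a bookkeeping of equivariance through the deformation functor, once the continuity of the action on~\(\cmp{A}\) has been secured.
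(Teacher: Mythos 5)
Your proposal is correct and follows essentially the same route as the paper: secure continuity of the \(G\)\nb-action on~\(\cmp{A}\) via Brown's automatic continuity result, apply Theorem~\ref{the:Rieffel_deform_action}, and use~\ref{enum:RD3} on the trivial boundary action to get \(\bdy{A}^\Psi=\bdy{A}\) and the translation property. The only cosmetic difference is that you reach continuity by embedding \(\cmp{A}\) into the largest translation compactification~\(\cmp{A}_G\) of Example~\ref{exa:nc_compactify_group} and restricting, whereas the paper applies \cite{Brown:Continuity} directly to the extension \(A\into\cmp{A}\prto\bdy{A}\); both rest on the same automatic-continuity input.
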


\begin{proof}
  By assumption, the action on~\(A\) extends to an action
  on~\(\cmp{A}\) that induces the trivial action on~\(\bdy{A}\).
  Since the trivial action on~\(\bdy{A}\) and the action on~\(A\)
  are continuous, so is the induced action on~\(\cmp{A}\) by the
  main result of~\cite{Brown:Continuity}.  Hence Rieffel deformation
  makes sense in this case.  Theorem~\ref{the:Rieffel_deform_action}
  shows that \(A^\Psi\idealin \cmp{A}{}^\Psi\) is a noncommutative
  coarse space with the corona algebra~\(\bdy{A}^\Psi\).
  Since~\(G\) acts trivially on~\(\bdy{A}\), \ref{enum:RD3} gives
  \(\bdy{A}^\Psi = \bdy{A}\) with the trivial action of~\(G\).
  So~\(G\) acts on~\(A^\Psi\) by translations.
\end{proof}

\begin{lemma}
  \label{lem:Rieffel_trivial}
  Let~\(G\) act trivially on~\(A\).  Then \(A^\Psi=A\) as
  subalgebras of \(\Mult(A\rtimes G)\); even more, the deformed dual
  action on \(A\rtimes G\) used by Kasprzak to define the Rieffel
  deformation is the same as the original dual action.
\end{lemma}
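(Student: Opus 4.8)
The plan is to reduce the whole statement to the single assertion about the deformed dual action and then invoke Landstad duality. Recall that in Kasprzak's framework the deformation is realised inside \(\Mult(A\rtimes G)\): one keeps the crossed product \(A\rtimes_\alpha G\) (for the original action~\(\alpha\)) fixed, replaces the dual action \(\hat\alpha\) of~\(\hat{G}\) by a deformed dual action \(\hat\alpha^\Psi\), and defines \(A^\Psi\) as the corresponding Landstad algebra. The undeformed action~\(\hat\alpha\) has, by ordinary Landstad duality, the original~\(A\) as its Landstad algebra. Since the Landstad algebra is determined, as a subalgebra of \(\Mult(A\rtimes G)\), by the dual action together with the canonical embedding \(\lambda\colon\Cst(G)\to\Mult(A\rtimes G)\), it suffices to prove the \emph{even more} clause, namely \(\hat\alpha^\Psi=\hat\alpha\) when~\(\alpha\) is trivial; the equality \(A^\Psi=A\) in \(\Mult(A\rtimes G)\) then follows at once.

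First I would unwind Kasprzak's definition of \(\hat\alpha^\Psi\) and record its one feature that matters here: the deformation perturbs the dual action by an inner automorphism, \(\hat\alpha^\Psi_\chi=\Ad(U_\chi)\circ\hat\alpha_\chi\), where the unitaries \(U_\chi\in\Mult(A\rtimes G)\) are manufactured solely from the cocycle~\(\Psi\) and the canonical unitaries~\(\lambda_g\) implementing~\(G\); in particular each \(U_\chi\) lies in the image of \(\Mult(\Cst(G))\) under~\(\lambda\). (In the prototypical case where~\(\Psi\) is a nondegenerate bicharacter this reads \(U_\chi=\lambda_{\phi(\chi)}\) for the homomorphism \(\phi\colon\hat{G}\to G\) induced by~\(\Psi\).) The only input I need is that \(U_\chi\) is a ``function of the~\(\lambda_g\)'' and involves no part of~\(A\).

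The key step is then to observe that a trivial action forces these unitaries to be central. When~\(\alpha\) is trivial we have \(\lambda_g a\lambda_g^*=\alpha_g(a)=a\) for all \(g\in G\) and \(a\in A\), so the \(\lambda_g\) commute with~\(A\); as~\(G\) is Abelian they also commute with one another, whence \(\Cst(G)\) is commutative and \(\lambda\bigl(\Mult(\Cst(G))\bigr)\) commutes with every~\(\lambda_h\) as well. Since \(A\) together with the~\(\lambda_g\) generate \(A\rtimes G\), each \(U_\chi\in\lambda(\Mult(\Cst(G)))\) commutes with all of \(A\rtimes G\) and is therefore central in \(\Mult(A\rtimes G)\). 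Hence \(\Ad(U_\chi)=\Id\) for every \(\chi\in\hat{G}\), so \(\hat\alpha^\Psi_\chi=\hat\alpha_\chi\). Feeding this back into Landstad duality yields \(A^\Psi=A\) inside \(\Mult(A\rtimes G)\).

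The main obstacle I anticipate is purely bookkeeping at the level of Kasprzak's definitions: one must verify that the deforming unitaries genuinely come from the copy of \(\Mult(\Cst(G))\) and carry no factor from~\(A\), so that triviality of~\(\alpha\) is exactly what collapses \(\Ad(U_\chi)\) to the identity. Once this is confirmed, the centrality computation is immediate. A mild point to check along the way is that the same embedding~\(\lambda\) of \(\Cst(G)\) is used both in recovering~\(A\) and in defining~\(A^\Psi\), so that ``same dual action'' really does give ``same Landstad algebra''.
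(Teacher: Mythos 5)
Your proof is correct and follows essentially the same route as the paper: both arguments rest on the observation that the deforming unitaries live in (the multiplier algebra of) \(\Cst(G)\subseteq\Mult(A\rtimes G)\), and that triviality of the action makes this copy of \(\Cst(G)\) central, so conjugation by these unitaries is the identity and the dual action is unchanged. The paper phrases the centrality via \(A\rtimes G\cong A\otimes\Cst(G)\) and leaves the Landstad-duality reduction implicit, but the substance is identical to your argument.
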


\begin{proof}
  The deformation of the dual action conjugates the automorphism by
  some unitary elements of \(\Cont_0(\hat{G}) \cong \Cst(G)
  \subseteq \Mult(A\rtimes G)\).  Since the action of~\(G\) on~\(A\)
  is trivial, \(A\rtimes G\cong A\otimes \Cst(G)\), so \(\Cst(G)\)
  is contained in the centre of the multiplier algebra.  Hence
  conjugating by unitaries in~\(\Cst(G)\) is the identity
  automorphism, and so the deformation does, in fact, not change the
  dual action.
\end{proof}

\subsection{The coarse Moyal plane}
\label{sec:coarse_Moyal}

Let \(G\defeq \R^{2n}\) and \(A\defeq \Cont_0(\R^{2n})\) with the
translation action, \(\alpha_g f(x) \defeq f(x-g)\) for all
\(x,g\in\R^{2n}\).  Equip~\(G\) with the usual Euclidean metric and
let~\(\cmp{A}\) be the \(\Cst\)\nb-algebra of slowly oscillating
functions, see Example~\ref{exa:Higson_compactification}.  The
automorphisms~\(\alpha_g\) of~\(A\) are indeed ``translations'' as
in Definition~\ref{def:translation}.  Hence
Theorem~\ref{the:Rieffel_deform_translation_action} allows us to
transport this coarse structure to any Rieffel deformation of~\(A\),
in such a way that the corona algebra stays the same.

If the cocycle~\(\Psi\) comes from a nondegenerate antisymmetric
bilinear map on~\(\R^{2n}\), then the Rieffel deformation for the
translation action on~\(\Cont_0(\R^{2n})\) is a Moyal plane.  Its
underlying \(\Cst\)\nb-algebra is isomorphic to the
\(\Cst\)\nb-algebra of compact operators on the Hilbert space
\(L^2(\R^n)\).  This is well-known for Rieffel's original definition
of his deformation, which is is equivalent to Rieffel's
(see~\cite{Neshveyev:Smooth}).  Thus we
have found some noncommutative compactification of
\(\Comp(L^2\R^n)\), that is, a unital \(\Cst\)\nb-subalgebra
\(\cmp{A}{}^\Psi\) of \(\Bound(L^2\R^n) \cong
\Mult(\Comp(L^2\R^n))\), such that
\(\cmp{A}{}^\Psi/\Comp(L^2\R^n)\), its image in the Calkin algebra,
is commutative.  Namely, \(\cmp{A}{}^\Psi/\Comp(L^2\R^n) \cong
\Cont(\bdy{\R^{2n}})\) is the \(\Cst\)\nb-algebra of continuous
functions on the Higson corona of~\(\R^{2n}\) for the Euclidean
metric.

Next we describe this compactification of the Moyal plane
directly, without any reference to the classical Euclidean plane.
The group~\(\R^{2n}\) acts on~\(L^2\R^n\) by a projective
representation, where one copy of~\(\R^n\) acts by translation, the
other by pointwise multiplication with characters.  This projective
representation on~\(L^2\R^n\)
induces a strongly continuous \(\R^{2n}\)\nb-action~\(\beta\) on
\(B\defeq\Comp(L^2\R^n)\).  This is equivalent to the
\(\R^{2n}\)\nb-action on \(\Cont_0(\R^{2n})^\Psi \cong
\Comp(L^2\R^n)\) as a Rieffel deformation.

\begin{theorem}
  \label{the:coarse_Moyal}
  The coarse structure on the Moyal plane \(B\defeq \Comp(L^2\R^n)\)
  constructed by Rieffel deformation of the classical Euclidean
  plane is
  \[
  \cmp{B} \defeq \{x\in \Bound(L^2\R^n) \mid
  \beta_g(x)-x\in \Comp(L^2\R^n)\ \text{for all}\ g\in \R^{2n}\},
  \]
  the largest compactification for which the
  \(\R^{2n}\)\nb-action~\(\beta\) is by translations; see also
  Example~\textup{\ref{exa:nc_compactify_group}}.
\end{theorem}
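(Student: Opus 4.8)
The plan is to unwind all the identifications so that the statement becomes an equality of two \(\Cst\)\nb-subalgebras of the common algebra \(\Mult(A^\Psi)=\Mult(\Comp(L^2\R^n))=\Bound(L^2\R^n)\). Under the identification \(A^\Psi\cong B=\Comp(L^2\R^n)\) with \(\beta\) the deformed action, the coarse structure produced by Rieffel deformation is precisely \(\cmp{A}{}^\Psi\subseteq\Bound(L^2\R^n)\), while \(\cmp{B}\) as displayed in the theorem is exactly the compactification \(\cmp{B}_G\) obtained by applying the construction of Example~\ref{exa:nc_compactify_group} to \((B,\beta)\), that is, the \emph{largest} compactification of \(B\) for which \(\beta\) acts by translations. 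So the goal is to prove \(\cmp{A}{}^\Psi=\cmp{B}\). The one input I single out first is the identity \(\cmp{A}=\cmp{A}_G\): the Higson compactification of \(\R^{2n}\) for the Euclidean metric coincides with the largest compactification of \(A=\Cont_0(\R^{2n})\) on which the translation action is by translations, by Lemma~\ref{lem:Higson_group} together with the identification of the Euclidean coarse structure with the one of Example~\ref{exa:coarse_group}. This equality, rather than the mere inclusion \(\cmp{A}\subseteq\cmp{A}_G\), is what the argument will need.

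One inclusion is immediate. By Theorem~\ref{the:Rieffel_deform_translation_action}, applied to \(A\idealin\cmp{A}\) and the cocycle \(\Psi\), the pair \(A^\Psi\idealin\cmp{A}{}^\Psi\) is a noncommutative coarse space on which \(G=\R^{2n}\) acts by translations. Since \(\cmp{B}\) is by Example~\ref{exa:nc_compactify_group} the largest compactification of \(B\) with this property, I immediately get \(\cmp{A}{}^\Psi\subseteq\cmp{B}\).

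For the reverse inclusion \(\cmp{B}\subseteq\cmp{A}{}^\Psi\), which is the heart of the matter, I would exploit the invertibility of Rieffel deformation. Let \(\Psi^{-1}\) be the inverse cocycle, so that \(\Psi\Psi^{-1}\) is the unit cocycle. Because \(\beta\) acts on \(B\) by translations with respect to \(\cmp{B}\), Theorem~\ref{the:Rieffel_deform_translation_action} applies to \(B\idealin\cmp{B}\) and the cocycle \(\Psi^{-1}\), and shows that \(B^{\Psi^{-1}}\idealin\cmp{B}{}^{\Psi^{-1}}\) is a coarse space on which \(G\) acts by translations. By~\ref{enum:RD4} and~\ref{enum:RD5} one has \(B^{\Psi^{-1}}=(A^\Psi)^{\Psi^{-1}}=A^{\Psi\Psi^{-1}}=A\), so \(\cmp{B}{}^{\Psi^{-1}}\) is a translation compactification of \(A\). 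Maximality of \(\cmp{A}_G=\cmp{A}\) then forces \(\cmp{B}{}^{\Psi^{-1}}\subseteq\cmp{A}\). Deforming this equivariant inclusion of \(G\)\nb-\(\Cst\)\nb-algebras back by \(\Psi\), and using \ref{enum:RD4} and \ref{enum:RD5} once more to compute \((\cmp{B}{}^{\Psi^{-1}})^\Psi=\cmp{B}{}^{\Psi^{-1}\Psi}=\cmp{B}\), I recover \(\cmp{B}\subseteq(\cmp{A})^\Psi=\cmp{A}{}^\Psi\), completing the equality.

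The step I expect to require the most care is verifying that Rieffel deformation transports containments of compactifications faithfully, as subalgebras of the ambient multiplier algebra. Concretely, for an equivariant injective morphism \(C_1\subseteq C_2\) of \(G\)\nb-\(\Cst\)\nb-algebras that are both compactifications of a common essential ideal, I must check that \(C_1^\Psi\subseteq C_2^\Psi\) holds inside \(\Mult\) of the deformed ideal. This rests on~\ref{enum:RD1}, which preserves injectivity of morphisms, together with the naturality of the embedding of a compactification into the multiplier algebra of its essential ideal supplied by Lemma~\ref{lem:compafication_in_MA}. I also need the canonical isomorphisms \((C^{\Psi^{-1}})^\Psi\cong C\) coming from~\ref{enum:RD4} and~\ref{enum:RD5} to be natural in \(C\), so that the two inclusions above compose correctly under these identifications. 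Granting these functoriality facts, which are built into Kasprzak's picture of the deformation, the remaining bookkeeping is routine.
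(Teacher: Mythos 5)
Your proposal is correct and follows essentially the same route as the paper: both rest on Theorem~\ref{the:Rieffel_deform_translation_action}, the invertibility of deformation via \ref{enum:RD4} and~\ref{enum:RD5}, functoriality \ref{enum:RD1}, and the identification \(\cmp{A}=\cmp{A}_G\) from Lemma~\ref{lem:Higson_group}. The paper merely packages your two inclusions as the single claim that \(\cmp{B}{}^{\Psi^*}\) is the largest translation compactification of~\(A\), which it then matches against the Higson compactification; the content is identical.
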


\begin{proof}
  Let \(B\idealin \cmp{B}\) be the coarse structure constructed in
  Example~\ref{exa:nc_compactify_group}.  It is the
  largest coarse structure such that~\(\R^{2n}\) acts by
  translations.  Let~\(\Psi\) be the cocycle on~\(\R^{2n}\) such
  that \(B=A^\Psi\) with \(A=\Cont_0(\R^{2n})\).  Thus \(B^{\Psi^*} = A =
  \Cont_0(\R^{2n})\) by \ref{enum:RD4} and~\ref{enum:RD5}.  By
  Theorem~\ref{the:Rieffel_deform_translation_action}, the action
  of~\(\R^{2n}\) on~\(A\) is by translations with respect to the
  compactification~\(\cmp{B}{}^{\Psi^*}\).  Even more, we claim that
  \(\cmp{B}{}^{\Psi^*}\) is the largest compactification of~\(A\)
  for which~\(\R^{2n}\) acts by translations.  Indeed, if
  \(\cmp{A}\) is such a compactification of~\(A\), then~\(\R^{2n}\)
  still acts by translations on~\(\cmp{A}{}^\Psi\) by
  Theorem~\ref{the:Rieffel_deform_translation_action}, so that
  \(\cmp{A}{}^\Psi\subseteq \cmp{B}\); and then \(\cmp{A} =
  \cmp{A}{}^{\Psi\Psi^*} = (\cmp{A}{}^\Psi)^{\Psi^*}\subseteq
  \cmp{B}{}^{\Psi^*}\) by \ref{enum:RD5}, \ref{enum:RD4}
  and~\ref{enum:RD1}.  The largest compactification of~\(\R^{2n}\)
  where~\(\R^{2n}\) acts by translations is the one described in
  Example~\ref{exa:nc_compactify_group}.
  Lemma~\ref{lem:Higson_group} identifies it with the standard
  coarse structure on~\(\R^{2n}\), which is the same as the coarse
  structure from the Euclidean metric on~\(\R^{2n}\).  So \(\cmp{B}
  = (\cmp{B}{}^{\Psi^*})^\Psi \cong \cmp{A}{}^\Psi\) is the Rieffel
  deformation of the Higson compactification of~\(\Cont_0(\R^{2n})\)
  for the standard coarse structure on~\(\R^{2n}\).
\end{proof}

Now let~\(\cmp{\R^{2n}}\) be the usual ball compactification with
the boundary \(\bdy{\R^{2n}} \cong \mathbb{S}^{2n-1}\).  All
continuous functions on~\(\cmp{\R^{2n}}\) oscillate slowly with
respect to the Euclidean metric on~\(\R^{2n}\).  When we apply
Rieffel deformation as above, then we get a compactification of the
Moyal plane~\(\Comp(L^2\R)\) by~\(\Cont(\mathbb{S}^{2n-1})\).  The
Rieffel deformation of~\(\Cont(\cmp{\R^{2n}})\) is already studied
by Coburn and Xia~\cite{Coburn-Xia:Toeplitz_Rieffel}.  They identify
it with the Toeplitz algebra of the unit ball in~\(\C^n\).  In
particular, for \(n=1\) we get the usual Toeplitz algebra, the
universal \(\Cst\)\nb-algebra generated by a single isometry.
Since~\(\Cont(\cmp{\R^{2n}})\) is contained in the Higson
compactification for the Euclidean metric, their Rieffel
deformations are also contained in one another.  So the
compactification described in Theorem~\ref{the:coarse_Moyal} may be
related to Toeplitz operators with discontinuous symbols.

\section{Cocompact continuously square-integrable group actions}
\label{sec:cocompact_si}

Example~\ref{exa:coarse_group} describes a unique
``\(G\)\nb-invariant'' coarse structure on a locally compact
space~\(X\) with a continuous, proper, cocompact action of a locally
compact group~\(G\).  The space~\(X\) equipped with this
coarse structure is coarsely equivalent to~\(G\) with the standard
coarse structure: the orbit inclusions \(G\to X\), \(g\mapsto g\cdot
x\), for \(x\in X\) are coarse equivalences.  We are going to extend
this construction to the noncommutative case.  So let~\(A\) be a
\(\Cst\)\nb-algebra with a continuous action~\(\alpha\) of~\(G\).
We first have to define when~\(\alpha\) is ``proper'' and
``cocompact.''  We interpret ``proper'' as
\emph{continuously square-integrable} as
in~\cite{Meyer:Generalized_Fixed}, see also \cites{Rieffel:Proper,
  Rieffel:Integrable_proper}.

A continuously square-integrable action comes with some additional
data.  We shall use the equivalent characterisation of continuous
square-integrability in \cite{Meyer:Generalized_Fixed}*{Theorem 6.1}
(in the case of the Hilbert \(A\)\nb-module
\(\mathcal{E}=A\)):
a continuously square-integrable structure for~\((A,\alpha,G)\)
is equivalent to a Hilbert module~\(\mathcal{F}\)
over \(A\rtimes_{\mathrm{r},\alpha} G\)
with a \(G\)\nb-equivariant unitary of Hilbert \(A\)\nb-modules
\begin{equation}
  \label{eq:csi_unitary}
  U\colon A\congto
  \mathcal{F}\otimes_{A\rtimes_{\mathrm{r},\alpha}G} L^2(G,A);
\end{equation}
here we use the faithful representation
of the reduced crossed product \(A\rtimes_{\mathrm{r},\alpha} G\) on
the Hilbert \(A\)\nb-module~\(L^2(G,A)\) by \(G\)\nb-equivariant
operators described in~\cite{Meyer:Generalized_Fixed}*{§3}.
Let \(\Mult(A)^G\)
be the \(\Cst\)\nb-algebra
of \(G\)\nb-invariant
multipliers of~\(A\).
The \emph{generalised fixed point algebra} of the continuously
square-integrable action with Hilbert module~\(\mathcal{F}\)
is defined to be \(\Comp(\mathcal{F})\),
viewed as a \(\Cst\)\nb-subalgebra
of~\(\Mult(A)^G\) through the representation
\[
\Comp(\mathcal{F}) \xrightarrow{T\mapsto T\otimes 1}
\Bound\bigl(\mathcal{F} \otimes_{A\rtimes_{\mathrm{r},\alpha} G}
L^2(G,A)\bigr)^G
\xrightarrow[\cong]{\Ad U^*} \Bound(A)^G = \Mult(A)^G.
\]
This contains the construction of a generalised fixed point algebra by
Rieffel~\cite{Rieffel:Proper} as a special case.

Let~\(X\) be a locally compact space with a continuous action
of~\(G\).  Let \(A=\Cont_0(X)\) and let~\(\alpha\) be the induced
action of~\(G\) on~\(A\), which is continuous.  As shown in
\cite{Meyer:Generalized_Fixed}*{§9}, this action is continuously
square-integrable if and only if the action on~\(G\) is proper; the
Hilbert module~\(\mathcal{F}\) is unique in this case; and the
generalised fixed point algebra is \(\Cont_0(X/G)\).  This is unital
if and only if~\(X/G\) is compact.  This justifies the following
definition:

\begin{definition}
  \label{def:cocompact}
  A continuously square-integrable action is \emph{cocompact} if its
  generalised fixed point algebra is unital.
\end{definition}

From now on, we assume the action~\(\alpha\)
to be cocompact continuously square-integrable.  Implicitly, this
fixes a Hilbert module~\(\mathcal{F}\)
over \(B\defeq A\rtimes_{\mathrm{r},\alpha} G\)
and a \(G\)\nb-equivariant
unitary Hilbert \(A\)\nb-module
isomorphism~\(U\)
as in~\eqref{eq:csi_unitary}.  In this situation, we want to define a
coarse structure on~\(A\).

\begin{proposition}
  \label{pro:cocompact_Hilbert_module}
  Let~\(B\) be a \(\Cst\)\nb-algebra and~\(\mathcal{F}\) a Hilbert
  \(B\)\nb-module.  The \(\Cst\)\nb-algebra \(\Comp(\mathcal{F})\)
  is unital if and only if there are \(n\in\N\), a projection \(p\in
  \Mat_n(B)\), and a unitary \(\mathcal{F} \cong p\cdot B^n\).  The
  projection~\(p\) is unique up to Murray--von Neumann equivalence.
\end{proposition}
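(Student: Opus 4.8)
The plan is to reduce the statement to a property of the identity operator and then to build an explicit ``tight frame'' for~\(\mathcal{F}\). First I would dispose of the easy direction: if \(\mathcal{F}\cong p\cdot B^n\) with a projection \(p\in\Mat_n(B)\), then under the standard identification \(\Comp(B^n)\cong\Mat_n(B)\) (the closed linear span of the rank-one operators \(\ket{\xi}\bra{\eta}\), \(\ket{\xi}\bra{\eta}(\zeta)=\xi\braket{\eta}{\zeta}\), for \(\xi,\eta\in B^n\)) one has \(\Comp(\mathcal{F})\cong p\,\Comp(B^n)\,p=p\,\Mat_n(B)\,p\), which is unital with unit~\(p\) because \(p\in\Mat_n(B)\). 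For the converse I would use that \(\Comp(\mathcal{F})\) is an essential ideal in \(\Bound(\mathcal{F})=\Mult(\Comp(\mathcal{F}))\): if a unit \(e\) of this ideal exists, then \((eT-T)x=0\) for all \(T\in\Bound(\mathcal{F})\) and \(x\in\Comp(\mathcal{F})\), whence \(e=\Id_\mathcal{F}\) by essentiality. Thus \(\Comp(\mathcal{F})\) is unital if and only if \(\Id_\mathcal{F}\in\Comp(\mathcal{F})\).

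The heart of the argument is to upgrade ``\(\Id_\mathcal{F}\) is compact'' to an \emph{exact} finite presentation, that is, to produce \(\xi_1,\dots,\xi_n\in\mathcal{F}\) with \(\sum_k\ket{\xi_k}\bra{\xi_k}=\Id_\mathcal{F}\) on the nose. I would approximate \(\Id_\mathcal{F}\) in norm by a finite-rank operator \(S=\sum_k\ket{\mu_k}\bra{\nu_k}\) and then pass to \(T\defeq S^*S\). Writing \(S=\Theta_\mu\Theta_\nu^*\) with \(\Theta_\mu(c)=\sum_k\mu_k c_k\), a short computation absorbing the positive Gram matrix \(M=\Theta_\mu^*\Theta_\mu=(\braket{\mu_i}{\mu_j})\in\Mat_n(B)\) through its square root gives \(T=\Theta_\nu M\Theta_\nu^*=\Theta_\eta\Theta_\eta^*=\sum_k\ket{\eta_k}\bra{\eta_k}\) with \(\eta_l=\sum_k\nu_k(M^{1/2})_{kl}\); so \(T\) lies in the special positive cone. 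Choosing \(S\) close enough to \(\Id_\mathcal{F}\) forces \(\norm{\Id_\mathcal{F}-T}<1\), so \(T\) is positive and invertible in the (now unital) \(\Comp(\mathcal{F})\); replacing \(\eta_k\) by \(\xi_k\defeq T^{-1/2}\eta_k\) yields \(\sum_k\ket{\xi_k}\bra{\xi_k}=T^{-1/2}TT^{-1/2}=\Id_\mathcal{F}\), as desired.

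From here everything is formal. I would define \(\Xi\colon\mathcal{F}\to B^n\) by \(\Xi(\zeta)_k=\braket{\xi_k}{\zeta}\); it is adjointable with \(\Xi^*(b)=\sum_k\xi_k b_k\), and \(\Xi^*\Xi=\sum_k\ket{\xi_k}\bra{\xi_k}=\Id_\mathcal{F}\), so \(\Xi\) is an isometry of Hilbert modules. Then \(p\defeq\Xi\Xi^*\) is a self-adjoint projection with matrix entries \((\braket{\xi_i}{\xi_k})_{i,k}\), hence \(p\in\Mat_n(B)=\Comp(B^n)\), and \(\Xi\) corestricts to a unitary \(\mathcal{F}\congto p\cdot B^n\). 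For the uniqueness clause I would compare two presentations directly: a unitary \(w\colon p\cdot B^n\to q\cdot B^m\) gives \(v\defeq w p\in\Comp(B^n,B^m)=\Mat_{m,n}(B)\) with \(v^*v=p\) and \(vv^*=q\), which is exactly a Murray--von Neumann equivalence \(p\sim q\).

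The step I expect to be the main obstacle is the normalisation producing an \emph{exact} frame. A generic finite-rank approximant of \(\Id_\mathcal{F}\) is not of the positive form \(\sum_k\ket{\xi_k}\bra{\xi_k}\), and over a non-unital~\(B\) there are no standard basis vectors of \(B^n\) to fall back on. The device of passing to \(S^*S\) (to land in the correct cone) together with the \(T^{-1/2}\) rescaling (to reach the identity exactly) is precisely what makes the isometry \(\Xi\) and the projection~\(p\) drop out, and it is where the argument must be handled with care.
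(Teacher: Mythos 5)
Your proof is correct, but it takes a genuinely different route from the paper's. Both arguments begin by reducing unitality of \(\Comp(\mathcal{F})\) to \(\Id_{\mathcal{F}}\in\Comp(\mathcal{F})\) and dispose of the easy direction the same way. For the converse, the paper argues abstractly: unitality makes \(\Comp(\mathcal{F})\) \(\sigma\)\nb-unital, hence \(\mathcal{F}\) countably generated, so the Kasparov Stabilisation Theorem gives \(\mathcal{F}\cong p_1\cdot\ell^2(\N,B)\) with \(p_1\in\Bound\bigl(\ell^2(\N,B)\bigr)\); compactness of \(\Id_{\mathcal{F}}\) forces \(p_1\in\Comp\bigl(\ell^2(\N,B)\bigr)\), and since finite matrices are dense there, one approximates \(p_1\) within \(\nicefrac12\) by some \(p_2\in\Mat_n(B)\) and perturbs \(p_2\) to a projection \(p\in\Mat_n(B)\) that is Murray--von Neumann equivalent to~\(p_1\). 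You instead construct an explicit finite tight frame: approximate \(\Id_{\mathcal{F}}\) by a finite-rank operator~\(S\), pass to \(T=S^*S=\sum_k\ket{\eta_k}\bra{\eta_k}\) by absorbing the square root of the Gram matrix, and rescale by \(T^{-1/2}\) to reach \(\sum_k\ket{\xi_k}\bra{\xi_k}=\Id_{\mathcal{F}}\) exactly, from which the isometry~\(\Xi\) and the projection \(p=\Xi\Xi^*\in\Mat_n(B)\) drop out. Your computation checks out (the identity \(\ket{\xi b}\bra{\eta}=\ket{\xi}\bra{\eta b^*}\) is what makes the Gram-matrix absorption work, and \(M^{1/2}\in\Mat_n(B)\) even for non-unital~\(B\) since \(M\ge0\)). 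What each approach buys: yours is more self-contained and constructive --- it avoids the Stabilisation Theorem and the projection-perturbation lemma, and produces the frame and the unitary explicitly --- while the paper's is shorter given the standard toolbox. Your uniqueness argument via the partial isometry \(v=wp\) with \(v^*v=p\), \(vv^*=q\) is exactly the observation the paper dismisses as trivial.
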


\begin{proof}
  The second claim is trivial.  The first one is contained in well
  known results if~\(B\) is unital.  See, for instance,
  \cite{Wegge-Olsen:K-theory}*{Theorem 15.4.2} and the remarks
  following it.  If~\(B\) is not unital, then it still follows by
  similar arguments.  If
  \(\mathcal{F}\cong p\cdot B^n\) for a projection
  \(p\in\Mat_n(B)\), then \(\Id_{\mathcal{F}}\) is compact because
  so is~\(p\) and compact operators form an ideal.  Conversely,
  assume that \(\Id_{\mathcal{F}}\) is compact.  Then
  \(\Comp(\mathcal{F})\) is \(\sigma\)\nb-unital and
  hence~\(\mathcal{F}\) is countably generated.  So \(\mathcal{F}
  \cong p_1\cdot \ell^2(\N,B)\) for a projection
  \(p_1\in\Bound\bigl(\ell^2(\N,B)\bigr)\) by the Kasparov
  Stabilisation Theorem.  Actually,
  \(p_1\in\Comp\bigl(\ell^2(\N,B)\bigr)\) because
  \(\Id_{\mathcal{F}}\) is compact.  Since finite matrices are dense
  in the compact operators, there are \(n\in\N\) and \(p_2
  \in\Mat_n(B)\) so that \(\norm{p_2-p_1}<\nicefrac12\).
  Then~\(p_2\) is close to a projection \(p\in\Mat_n(B)\) that is
  Murray--von Neumann equivalent to~\(p_1\).  Thus \(p\cdot B^n
  \cong \mathcal{F}\).
\end{proof}

Fix a projection \(p\in \Mat_n(B)\) as in
Proposition~\ref{pro:cocompact_Hilbert_module}.  Using the
unitary~\(U\) in~\eqref{eq:csi_unitary}, we identify
\[
A \cong \mathcal{F}\otimes_B L^2(G,A)
\cong p\cdot B^n\otimes_B L^2(G,A)
\cong p\cdot L^2(G,A^n)
\]
for the representation of \(\Mat_n(B)\) on~\(L^2(G,A^n)\) induced by
the standard representation of~\(B\) on~\(L^2(G,A)\) by
\(G\)\nb-equivariant operators.  Let \(\Cont(\eta G)\subseteq
\Contb(G)\) be the Higson compactification for the coarse structure
on~\(G\) in Example~\ref{exa:coarse_group}.  This is equal
to the compactification of~\(\Cont_0(G)\) described in
Example~\ref{exa:nc_compactify_group} by
Lemma~\ref{lem:Higson_group}.  Let~\(M\) denote the representation
of~\(\Cont(\eta G)\) on \(L^2(G,A)\otimes \C^n \cong L^2(G,A^n)\) by
pointwise multiplication in the \(L^2G\)\nb-direction.  Let
\[
\varphi\colon \Cont(\eta G) \to \Bound(A),\qquad
f\mapsto p M(f) p,
\]
the compression of the representation by pointwise multiplication to
the direct summand \(p\cdot L^2(G,A^n) \cong A\).

\begin{theorem}
  \label{the:cocompact_csi_coarse}
  The subspace \(\cmp{A} \defeq A + \varphi(\Cont(\eta G)) \subseteq
  \Mult(A)\) is a compactification of~\(A\) with boundary
  \(\cmp{A}/A \cong \Cont(\bdy{\eta G})\).
\end{theorem}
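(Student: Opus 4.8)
The plan is to verify, via Lemma~\ref{lem:compafication_in_MA}, that \(\cmp{A}\) is a unital \(\Cst\)\nb-subalgebra of \(\Mult(A)\) containing~\(A\); essentiality of the ideal and the ideal property are then automatic because \(A\idealin\Mult(A)\) is essential. Throughout I use the identification \(A\cong p\cdot L^2(G,A^n)\), under which \(\Mult(A)=\Bound(p\cdot L^2(G,A^n))\) and \(A\) sits inside \(\Mult(A)\) as \(\Comp(p\cdot L^2(G,A^n))=p\Comp(L^2(G,A^n))p\); write \(q\colon\Mult(A)\to\Mult(A)/A\) for the quotient map. First I record the easy structural facts: each \(\varphi(f)=pM(f)p\) is adjointable on \(p\cdot L^2(G,A^n)\), so \(\varphi(f)\in\Mult(A)\); the map \(\varphi\) is linear and contractive with \(\varphi(f^*)=\varphi(f)^*\); and \(\varphi(1)=p\) is the identity operator on \(p\cdot L^2(G,A^n)\), i.e.\ the unit of \(\Mult(A)\), so \(1\in\cmp{A}\).

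The analytic core is a compactness estimate. Elements of \(\Mat_n(\Contc(G,A))\) act on \(L^2(G,A^n)\) as integral operators whose kernels are supported in a controlled neighbourhood \(\{(s,t):d(s,t)\le R\}\) of the diagonal, and they are dense in \(\Mat_n(B)\), so \(p\) is a norm-limit of such finite-propagation operators. For a finite-propagation \(b\) with kernel \(k\) and a slowly oscillating \(f\in\Cont(\eta G)\), the commutator \([M(f),b]\) has kernel \((f(s)-f(t))k(s,t)\); this is again supported near the diagonal, and by slow oscillation its norm is small once \(s,t\) leave a compact set. Splitting the kernel into a compactly supported part (which yields a compact operator) and a remainder of small operator norm (by a Schur estimate using that \(R\)\nb-balls have bounded Haar measure), I get \([M(f),b]\in\Comp(L^2(G,A^n))\), hence \([M(f),p]\in\Comp(L^2(G,A^n))\) in the limit. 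The same decomposition applied to \(M(f)b\) when \(f\in\Cont_0(G)\) — where now the kernel \(f(s)k(s,t)\) itself vanishes at infinity with controlled support — shows \(M(f)p\in\Comp(L^2(G,A^n))\), and therefore \(\varphi(f)=p\,M(f)p\in A\) for all \(f\in\Cont_0(G)\).

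From \([M(f),p]\in\Comp(L^2(G,A^n))\) I deduce that the defect \(\varphi(fg)-\varphi(f)\varphi(g)=-p\,[M(f),p]\,M(g)\,p\) lies in \(p\Comp(L^2(G,A^n))p=A\), so \(q\circ\varphi\colon\Cont(\eta G)\to\Mult(A)/A\) is multiplicative; being also linear and \Star{}preserving, it is a unital \Star{}homomorphism, whose image is closed. Since \(\cmp{A}=q^{-1}\bigl(q\varphi(\Cont(\eta G))\bigr)\), this exhibits \(\cmp{A}\) as a unital, norm-closed \(\Cst\)\nb-subalgebra of \(\Mult(A)\) containing~\(A\), that is, a compactification. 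For the boundary I must compute \(\Ker(q\varphi)\). The inclusion \(\Cont_0(G)\subseteq\Ker(q\varphi)\) is exactly the last statement of the previous paragraph, so \(q\varphi\) factors through \(\Cont(\eta G)/\Cont_0(G)=\Cont(\bdy{\eta G})\), and \(\cmp{A}/A=q\varphi(\Cont(\eta G))\) is a quotient of \(\Cont(\bdy{\eta G})\).

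It remains to prove the reverse inclusion \(\Ker(q\varphi)\subseteq\Cont_0(G)\), which I expect to be the main obstacle. Equivalently, if \(f\in\Cont(\eta G)\) does not vanish at infinity then \(\varphi(f)\notin A\), and I would detect the value of \(f\) at infinity by translated localized vectors. Using the projection~\(p\) from Proposition~\ref{pro:cocompact_Hilbert_module}, fix \(e\in L^2(G,A^n)\) compactly supported with \(v\defeq p\,e\neq0\); then \(v\in A\) is approximately supported in a compact set, since the finite propagation of \(p\) enlarges supports only boundedly. For \(g\in G\) put \(v_g\defeq\lambda_g v=p\,\lambda_g e\in A\), where \(\lambda_g\) is translation, which commutes with \(p\) by \(G\)\nb-equivariance of the representation of~\(B\). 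A direct computation gives \(\langle v_g,\varphi(f)v_g\rangle=\int_G f(gs)\,v(s)^*v(s)\,ds\), and slow oscillation forces \(f(gs)\approx f(g)\) uniformly for \(s\) in the bounded support of \(v\) as \(g\to\infty\); hence \(\norm{\langle v_g,\varphi(f)v_g\rangle}\) tends to \(\abs{f(g)}\,\norm{\langle v,v\rangle}\) along a sequence \(g\to\infty\) with \(\abs{f(g)}\to\limsup_{t\to\infty}\abs{f(t)}>0\). On the other hand \(v_g\to0\) weakly, so \(\langle v_g,Tv_g\rangle\to0\) for every compact \(T\); thus \(\varphi(f)\) cannot be compact. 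This yields \(\Ker(q\varphi)=\Cont_0(G)\), so the induced map \(\Cont(\bdy{\eta G})\to\cmp{A}/A\) is the asserted isomorphism. The delicate points are the clean construction of a localized \(v\) inside \(p\cdot L^2(G,A^n)\) and the uniformity of the weak-convergence and asymptotic-constancy estimates, which is precisely where the cocompact, controlled-support structure of~\(p\) is needed.
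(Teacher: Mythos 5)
Your proof is correct in substance, but the decisive step --- injectivity of the induced map \(\Cont(\bdy{\eta G})\to\cmp{A}/A\) --- is handled by a genuinely different argument than the paper's. For the first half you and the paper both need \([M(f),p]\in\Comp(L^2(G,A^n))\) for \(f\in\Cont(\eta G)\) and \(M(f)p\in\Comp(L^2(G,A^n))\) for \(f\in\Cont_0(G)\): the paper gets this algebraically from the identity \(\Cont_0(G)\cdot(A\rtimes_{\mathrm{r},\alpha}G)=\Comp(L^2(G,A))\) (Imai--Takai) together with \(\lambda_g(f)-f\in\Cont_0(G)\), whereas you argue by hand with finite-propagation kernels and a Schur estimate; both work, though you should phrase the translations as the \(\alpha\)\nb-twisted operators \(U_g\) (these, not plain translations, are what commute with~\(p\)) and watch the left/right conventions in the kernel supports. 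The real divergence is in proving \(\Ker(q\circ\varphi)\subseteq\Cont_0(G)\): the paper constructs an explicit unital completely positive map \(\chi\colon\cmp{A}\to\Cont(\eta G)\) from matrix coefficients \(C_{w^g}\) against a bump function and a state, with \(\chi(A)\subseteq\Cont_0(G)\) and \(\chi\circ\varphi=\Id\) modulo \(\Cont_0(G)\); you instead detect non-compactness of \(\varphi(f)\) directly by testing against translated, approximately localized vectors \(v_g=U_g(pe)\). Your detection argument is sound: \(\braket{\eta}{v_g}\to0\) in \(A\)\nb-norm for every fixed~\(\eta\) (because \(v\) is a norm limit of compactly supported vectors and the~\(U_g\) are isometric), which together with \(\sup_g\norm{v_g}<\infty\) forces \(\braket{v_g}{Tv_g}\to0\) for every compact~\(T\), while slow oscillation makes \(\norm{\braket{v_g}{\varphi(f)v_g}}\) accumulate at \(\limsup\abs{f}\cdot\norm{v}^2>0\); just remember that the inner products are \(A\)\nb-valued, so the comparison with \(f(g)\braket{v}{v}\) carries an (isometric) twist by~\(\alpha_g\). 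What you lose relative to the paper is that~\(\chi\) is not a throwaway device: it is reused immediately afterwards as the noncommutative coarse map \(A\to\Cont_0(G)\) that, together with~\(\varphi\), exhibits the coarse equivalence between \(A\) and~\(G\). Your proof of the theorem as stated is fine, but you would still have to build something like~\(\chi\) for that application.
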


\begin{proof}
  Consider the dense subalgebras \(\Contc(G)\subseteq \Cont_0(G)\)
  and \(\Contc(G)\subseteq \Cred(G)\).  When we represent both
  on~\(L^2(G)\),
  then \(M(\Contc(G)) \cdot \varrho(\Contc(G))\) is dense in the
  algebra of compactly supported integral kernels.  Hence
  \(\Cont_0(G)\cdot \Cred(G) = \Comp(L^2 G)\).  Similarly, when we
  represent \(\Cont_0(G)\) and \(A\rtimes_{\mathrm{r},\alpha} G\) on
  \(L^2(G,A)\) as above, then
  \[
  \Cont_0(G) \cdot (A\rtimes_{\mathrm{r},\alpha} G) =
  \Comp(L^2(G,A)).
  \]
  This is equivalent to the Imai--Takai Duality Theorem for crossed
  products for group actions and group coactions, which asserts an
  isomorphism
  \((A\rtimes_\mathrm{r} G) \rtimes \hat{G} \cong A\otimes \Comp(L^2
  G)\)
  (see \cite{Imai-Takai:Duality} or
  \cite{Echterhoff-Kaliszewski-Quigg-Raeburn:Categorical}*{Theorem~A.69}).

  The images of \(\Cont(\eta G)\)
  and~\(\Mat_n(A)\)
  in \(\Bound(L^2(G,A^n))\)
  commute.  And \(u_g f u_g^* - f = \lambda_g(f)-f\in \Cont_0(G)\) for
  all \(f\in\Cont(\eta G)\), \(g\in G\) by the definition of~\(\eta
  G\).  Therefore, \([B,\Cont(\eta G)] \subseteq \Cont_0(G) \cdot B
  = \Comp(L^2(G,A))\) as well.  So the map~\(\varphi\) above is
  multiplicative modulo \(\Comp(A)=A\).  Since \(\varphi(x^*)
  =\varphi(x)^*\), the induced map \(\Cont(\eta G) \to \Mult(A)/A\)
  is a \Star{}homomorphism.  Thus its image is a
  \(\Cst\)\nb-subalgebra of \(\Mult(A)/A\).  So
  \(A+\varphi(\Cont(\eta G))\) is a \(\Cst\)\nb-subalgebra
  of~\(\Mult(A)\).  It is unital because \(\varphi(1)=p\) is the
  identity operator on \(A \cong p \cdot L^2(G,A^n)\).  The argument
  above also shows that \(\varphi(\Cont_0(G)) \subseteq A\).  Thus
  the boundary \(\cmp{A} / A\) is a quotient of \(\Cont(\eta
  G)/\Cont_0(G) = \Cont(\bdy{\eta G})\).  To show that the induced
  map \(\Cont(\eta G)/\Cont_0(G) \to \cmp{A} / A\) is an
  isomorphism, we sketch the construction of a map \(\chi\colon
  \cmp{A} \to \Cont(\eta G)\) with \(\chi(A)\subseteq \Cont_0(G)\)
  and so that \(\chi\circ\varphi\) induces the identity map on
  \(\Cont(\eta G)/\Cont_0(G)\).

  Let \(w\in \Contc(G)\) be a positive function supported in a small
  neighbourhood of~\(1\) with \(\int_G w^2(g) \,\mathrm{d}g = 1\).
  Let \(w^g(x) \defeq w(g^{-1} x)\) for \(g,x\in G\).  We map
  \[
  \chi_1^g\colon
  \Bound(A) \cong \Bound(p\cdot L^2(G,A^n)) \to \Bound(L^2(G,A^n))
  \xrightarrow{C_{w^g}} \Bound(A^n) = \Mat_n(\Mult(A)),
  \]
  where the first arrow extends an operator by~\(0\) on the orthogonal
  complement and~\(C_{w^g}\) takes the matrix coefficient of \(w^g\in
  L^2(G)\), that is, \(\braket{C_{w^g}(x)\xi}{\eta} =
  \braket{x(\xi\otimes w^g)}{\eta\otimes w^g}\) for all
  \(\xi,\eta\in A^n\), \(x\in \Bound(L^2(G,A^n))\), \(g\in G\).  The
  operator \(\chi_1^1(1) = C_w(p)\) is positive.  If this is~\(0\),
  then~\(p\) vanishes on \(w\otimes A^n\) and hence on \(g\cdot w
  \otimes A^n\) for all \(g\in G\).  Since the \(G\)\nb-orbits of
  functions of the form~\(w\) are dense in~\(L^2 G\) and \(p\neq0\),
  there must be \(w\in\Contc(G)\) as above with \(C_w(p)\neq0\).
  Then we can find a positive linear functional \(\sigma\in A'\)
  with \(\sigma(C_w(p))=1\) and \(\norm{\sigma} \norm{C_w(p)}=1\).

  Recall that \(p\in \Mat_n(A)\rtimes_\mathrm{r} G\) acts on
  \(L^2(G,A^n)\) by a \(G\)\nb-equivariant operator, that is, \(p\)
  commutes with the operators
  \[
  U_g\colon L^2(G,A^n) \to L^2(G,A^n),\qquad
  (U_g f)(x) \defeq \alpha_g(f(g^{-1} x)).
  \]
  Thus \(C_w(p) = C_w(U_g^{-1} p U_g) =
  \alpha_g(C_{w^g}(p))\) for all \(g\in G\).  We define
  \[
  \chi\colon \Mult(A) = \Bound(A) \to \C^G,\qquad
  \chi(x)(g) \defeq \sigma\circ\alpha_g(\chi_1^g(x)).
  \]
  The map~\(\chi\) is completely positive by construction, and
  \[
  \chi(1)(g)
  = \sigma\alpha_g(C_{w^g}(p))
  = \sigma(C_w(p))
  = 1
  \]
  for all \(g\in G\).  Thus the function~\(\chi(x)\) on~\(G\) is
  bounded for all \(x\in \Mult(A)\) and \(\norm{\chi}=1\) as a map
  \(\Mult(A)\to\C^G\).

  If \(x\in A=\Comp(A)\),
  then its image in~\(\Bound(L^2(G,A^n))\)
  is compact, so we may approximate it by compactly supported integral
  kernels \(G\times G\to \Mat_n(A)\).
  For such an integral kernel~\(k\),
  \(g\mapsto \sigma(C_{w^g}(k))\)
  is continuous of compact support.  Hence
  \(\chi(A)\subseteq \Cont_0(G)\)
  as desired.  Let \(f\in\Cont(\eta G)\).  Then
  \begin{multline*}
    \chi(\varphi(f))(g)
    = \sigma\alpha_g(C_{w^g}(pM(f)p))
    \\= \sigma(C_w(U_g^{-1} pM(f)p U_g))
    = \sigma(C_w(p M(\lambda_{g^{-1}} f)p)).
  \end{multline*}
  Approximate~\(p\)
  by some \(\tilde{p}\in\Contc(G,\Mat_n(A))\)
  supported in a compact subset \(K\subseteq G\).
  Then \(C_w(\tilde{p} M(\lambda_{g^{-1}} f)\tilde{p})\)
  only involves the values of~\(\lambda_{g^{-1}} f\)
  on \(\supp w\cdot K\),
  that is, the values of~\(f\)
  on \(g\cdot \supp w \cdot K\).
  Since \(f\in\Cont(\eta G)\),
  \(f|_{g\cdot K\cdot \supp w}\)
  gets more and more constant.  Therefore,
  \[
  0 = \lim_{g\to\infty} \chi(\varphi(f))(g) - f(g)\sigma(C_w(p))
  = \lim_{g\to\infty} \chi(\varphi(f))(g) - f(g).
  \]
  Furthermore, since the \(G\)\nb-action on \(\Cont(\eta G)\) is
  continuous, \(\chi(\varphi(f))\) is a continuous function
  on~\(G\).  Thus \(\chi(\varphi(f)) - f \in \Cont_0(G)\).  So
  \(\chi(\cmp{A}) \subseteq \Cont(\eta G)\) and
  \(\chi\circ\varphi\) induces the identity map on \(\Cont(\eta
  G)/\Cont_0(G)\).
\end{proof}

\section{Noncommutative coarse maps and equivalences}
\label{sec:ncg_coarse_map}

We have encountered two situations that smell of a coarse equivalence
between two noncommutative spaces.  The first one is Rieffel
deformation for an action of an Abelian locally compact group~\(G\)
by translations, where we expect a coarse equivalence between
\(A\idealin \cmp{A}\)
and \(A^\Psi\idealin \cmp{A}{}^\Psi\).
The second is the coarse structure for a cocompact continuously
square-integrable action in Theorem~\ref{the:cocompact_csi_coarse},
which we expect to be coarsely equivalent to~\(\Cont_0(G)\)
with its usual coarse structure.  How should we define coarse maps
between noncommutative coarse spaces, so as to cover these two
situations?  In both cases, the corona algebras are isomorphic.  But a
\Star{}homomorphism between the boundaries is a poor definition of a
coarse map.

The example of the Moyal plane as a Rieffel deformation of
\(\Cont_0(\R^{2n})\)
shows that we cannot expect a \Star{}homomorphism between \(A\)
and~\(A^\Psi\).
This is not surprising because already in the commutative case, we
need discontinuous coarse maps, say, for the coarse equivalence
between \(\R\) and~\(\Z\).
An obvious way to allow for ``discontinuous'' \Star{}homomorphisms
would be to replace a \(\Cst\)\nb-algebra by its bidual
\(\Wst\)\nb-algebra.  This contains the \(\Cst\)\nb-algebra of Borel
functions on~\(X\) for \(\Cont_0(X)\), so that a normal
\Star{}homomorphism between the biduals would allow for a Borel map
between the locally compact spaces.  This idea fails, however, for
the Moyal plane: its underlying \(\Cst\)\nb-algebra is that of
compact operators, so its bidual is the
\(\Wst\)\nb-algebra~\(\Bound(\Hils)\) of bounded operators on a
Hilbert space.  There are no normal \Star{}homomorphisms from this
to a commutative \(\Wst\)\nb-algebra.

Rieffel's definition of his deformation is based on an isomorphism
between the Fr\'echet subspaces of smooth elements in \(A\)
and~\(A^\Psi\)
for the actions of~\(G\).
Once again, such a densely defined, unbounded map seems a poor way to
define noncommutative coarse maps.  Another ``quantisation map''
between \(A\)
and~\(A^\Psi\)
constructed in~\cite{Kaschek-Neumaier-Waldmann:Positivity} has much
better properties.  These were the motivation for the following
definition of a noncommutative coarse map.  As we shall see, the
coarse structure for a cocompact continuously square-integrable group
action in Theorem~\ref{the:cocompact_csi_coarse} also comes with
noncommutative coarse maps in this sense by construction.

\begin{definition}
  \label{def:coarse_map}
  A \emph{noncommutative coarse map} between two noncommutative coarse
  spaces \(A\idealin\cmp{A}\) and \(B\idealin\cmp{B}\) is a commuting
  diagram of maps
  \[
  \begin{tikzpicture}[baseline=(current bounding box.west)]
    \matrix (m) [cd,column sep=1em] {
      A & \cmp{A} & \bdy{A} \\
      B & \cmp{B} & \bdy{B}, \\
    };
    \draw[>->] (m-1-1) -- (m-1-2);
    \draw[>->] (m-2-1) -- (m-2-2);
    \draw[->>] (m-1-2) -- (m-1-3);
    \draw[->>] (m-2-2) -- (m-2-3);
    \draw[cdar] (m-1-1) -- node {\(\varphi\)} (m-2-1);
    \draw[cdar] (m-1-2) -- node {\(\cmp{\varphi}\)} (m-2-2);
    \draw[cdar] (m-1-3) -- node {\(\bdy{\varphi}\)} (m-2-3);
  \end{tikzpicture}
  \]
  with the following properties:
  \begin{enumerate}
  \item \(\cmp{\varphi}\) is a unital, completely positive map that is
    strictly continuous, that is, a continuous map if both \(\cmp{A}\)
    and~\(\cmp{B}\) carry the strict topology;
  \item \(\bdy{\varphi}\) is a \Star{}homomorphism.
  \end{enumerate}
\end{definition}

Since~\(\cmp{\varphi}\) is strictly continuous and completely
positive, so is~\(\varphi\).  Even more, if \((u_i)_{i\in I}\) is an
approximate unit in~\(A\), then the net~\((u_i)_{i\in I}\) converges
strictly to \(1\in\cmp{A}\) and hence \(\varphi(u_i)_{i\in I}\)
converges strictly to \(\cmp{\varphi}(1) = 1\) in~\(\Mult(B)\).
Thus~\(\varphi\) is a \emph{nondegenerate completely positive map}.
Conversely, let \(\varphi\colon A\to B\) be a nondegenerate
completely positive map; that is, \(\varphi\) is completely positive
and the net \(\varphi(u_i)_{i\in I}\) converges strictly to~\(1\)
if~\((u_i)_{i\in I}\) is an approximate unit in~\(A\).
Then~\(\varphi\) extends uniquely to a strictly continuous, unital,
completely positive map \(\Mult(\varphi)\colon
\Mult(A)\to\Mult(B)\); this follows, say, from the Stinespring
Dilation Theorem for nondegenerate completely positive maps, see
\cite{Lance:Hilbert_modules}*{Corollary 5.7}.  (Lance only asserts
strict continuity on the unit balls.  The Cohen--Hewitt
Factorisation Theorem allows to prove strict continuity everywhere.)

The arguments above show that a noncommutative coarse map is already
determined by the map \(\varphi\colon A\to B\).  This is a
nondegenerate, completely positive map \(\varphi\colon A\to B\) with
the following two extra properties:
\begin{itemize}
\item \(\Mult(\varphi)\colon \Mult(A)\to\Mult(B)\) maps
  \(\cmp{A}\subseteq \Mult(A)\) to \(\cmp{B}\subseteq \Mult(B)\),
  giving \(\cmp{\varphi}\colon \cmp{A}\to\cmp{B}\);
\item \(\cmp{\varphi}(a_1 a_2) - \cmp{\varphi}(a_1)
  \cmp{\varphi}(a_2)\in B\) for all \(a_1,a_2\in\cmp{A}\).
\end{itemize}
The second condition says that the induced map \(\bdy{\varphi}\colon
\bdy{A}\to\bdy{B}\) is multiplicative; this map exists because
\(\cmp{\varphi}(\cmp{A})\subseteq \cmp{B}\) and
\(\cmp{\varphi}(A)\subseteq B\).  Then~\(\bdy{\varphi}\) is a
\Star{}homomorphism because, as a completely positive map, it is
linear and preserves adjoints.  Any nondegenerate, completely
positive map \(\varphi\colon A\to B\) with these two properties
gives a noncommutative coarse map.

To define coarse equivalences, we must also define when to
noncommutative coarse maps are ``close.''  In the commutative case,
closeness may be defined for maps from any set into a coarse space.
We define closeness in similar generality:

\begin{definition}
  \label{def:close}
  Let \(A\idealin \cmp{A}\) be a noncommutative coarse space and
  let~\(B\) be a \(\Cst\)\nb-algebra.  Two nondegenerate completely
  positive maps \(\varphi,\psi\colon A\rightrightarrows B\) are
  \emph{close} if their strictly continuous extensions
  \(\Mult(\varphi),\Mult(\psi)\colon \Mult(A)\rightrightarrows
  \Mult(B)\) satisfy \(\bigl(\Mult(\varphi)-\Mult(\psi)\bigr)
  (\cmp{A}) \subseteq B\).  Thus two noncommutative coarse maps
  \((\varphi,\cmp{\varphi},\bdy{\varphi})\) and
  \((\psi,\cmp{\psi},\bdy{\psi})\) from \(A\idealin \cmp{A}\) to
  \(B\idealin \cmp{B}\) are close if and only if
  \(\bdy{\varphi}=\bdy{\psi}\).
\end{definition}

Noncommutative coarse maps may be composed in an obvious fashion.
So they define a category of noncommutative coarse spaces.  This
composition respects the closeness relation.  So equivalence classes
of noncommutative coarse maps up to closeness still form a category.
Taking the boundary quotient is a functor in this quotient category.
A noncommutative coarse equivalence is defined as an isomorphism in
this quotient category.  More explicitly:

\begin{definition}
  \label{def:coarse_equivalence}
  A noncommutative coarse map \(\varphi\) is called a \emph{coarse
    equivalence} if there is a noncommutative coarse map \(\psi\) in
  the opposite direction such that \(\varphi\circ\psi\) and
  \(\psi\circ\varphi\) are close to the identity maps.
\end{definition}

\begin{example}
  Let~\(A\) carry a cocompact continuously square-integrable action
  of a locally compact group~\(G\).  The maps \(\varphi\colon
  \Cont_0(G) \to A\) and \(\chi\colon A\to \Cont_0(G)\) constructed
  before Theorem~\ref{the:cocompact_csi_coarse} and during its proof
  are noncommutative coarse maps that are inverse to each other up
  to closeness.  So they form a coarse equivalence between \(A\)
  and~\(\Cont_0(G)\).  This is expected because in the commutative
  case, the unique \(G\)\nb-invariant coarse structure on a
  cocompact proper \(G\)\nb-space is coarsely equivalent to~\(G\).
  We have already constructed \(\varphi\) and~\(\chi\) as unital,
  completely positive maps between \(\cmp{A}\) and~\(\Cont(\eta
  G)\).  It is routine to check that \(\varphi\) and~\(\chi\) are
  strictly continuous, using the known fact that all states on~\(A\)
  are strictly continuous.  We checked during the proof of
  Theorem~\ref{the:cocompact_csi_coarse} that~\(\varphi\) induces a
  \Star{}homomorphism between the boundaries; this is an isomorphism
  by construction of~\(\cmp{A}\).  And we checked that~\(\chi\)
  induces the inverse map~\(\varphi^{-1}\) between the boundaries,
  so~\(\bdy{\chi}\) is a \Star{}homomorphism as well.
\end{example}

To further justify our definition of a noncommutative coarse map, we
now construct them in the context of Rieffel deformations.  So
let~\(G\) be a locally compact Abelian group and let~\(\Psi\) be a
\(2\)\nb-cocycle on~\(G\).  Let \(A\idealin \cmp{A}\) be a
noncommutative coarse space with a \(G\)\nb-action~\(\alpha\) by
translations.  We want to construct a noncommutative coarse map
\((\varphi,\cmp{\varphi},\bdy{\varphi})\) from \(A\idealin \cmp{A}\)
to \(A^\Psi\idealin \cmp{A}{}^\Psi\) such that \(\bdy{\varphi}\) is
the canonical isomorphism \(\bdy{A} \cong \bdy{A}^\Psi\) in
Theorem~\ref{the:Rieffel_deform_translation_action}.

The following construction works for an arbitrary
\(G\)\nb-\(\Cst\)-algebra~\(A\), so it also applies to \(\cmp{A}\)
and~\(\bdy{A}\).  We need Kasprzak's description of the Rieffel
deformation~\(A^\Psi\) and recall some of his notation,
see~\cite{Kasprzak:Rieffel_deformation}.  Let \(B\defeq
A\rtimes_\alpha G\) and let \(\hat\alpha\colon \hat{G}\to\Aut(B)\) be
the dual action on the crossed product.  Let \(\lambda\colon
\Cred(G)\to \Mult(B)\) be the standard inclusion.  The triple
\((B,\hat\alpha,\lambda)\) is called a \emph{\(G\)\nb-product}.  There
is also a canonical embedding \(A\subseteq \Mult(B)\), and its image
may be described using the \(G\)\nb-product structure: it consists of
those elements of~\(\Mult(B)\) that satisfy \emph{Landstad's
  conditions}.  In order to deform~\(A\), Kasprzak deforms the dual
action~\(\hat\alpha\) using the \(2\)\nb-cocycle~\(\Psi\).  The new
dual action~\(\hat\alpha^\Psi\) is still part of a \(G\)\nb-product
\((B,\hat\alpha^\Psi,\lambda)\), and~\(A^\Psi\) is the resulting
Landstad algebra.
Let \(b\in B\) and \(f_1,f_2\in \Cred(G)\cap L^2(G)\).  Then
\[
E^\Psi(f_1 b f_2) \defeq
\int_{\hat{G}} \hat\alpha^\Psi_\gamma(f_1 b f_2) \,\textup{d}\gamma
\]
belongs to~\(A^\Psi\), and \(\norm{E^\Psi(f_1 b f_2)} \le
\norm{f_1}_2 \norm{b} \norm{f_2}_2\); here the integral converges in
the strict topology on~\(\Mult(B)\).  Furthermore, Kasprzak shows
that elements of this form are dense in~\(A^\Psi\).

We want to use the same formula for \(b\in A\subseteq \Mult(B)\),
so~\(b\) no longer belongs to~\(B\).  But then \(f\cdot b\in B\) and
\(b\cdot f\in B\) for any \(f\in\Cred(G)\).  The \(\Cred(G)\)-module
\(\Cred(G)\cap L^2(G)\) with the norm \(f\mapsto \norm{f}_{\Cred(G)}
+ \norm{f}_2\) is nondegenerate because \(\Contc(G)\) is dense.  By
the Cohen--Hewitt Factorisation Theorem, any \(f_1\in \Cred(G)\cap
L^2(G)\) may also be written as \(f_1=f_1'\cdot f_1''\) with
\(f_1'\in \Cred(G)\cap L^2(G)\), \(f_1''\in\Cred(G)\), and
\[
\norm{f_1'}_{\Cred(G)\cap L^2(G)} \norm{f_1''}_{\Cred(G)} \le
\norm{f_1}_{\Cred(G)\cap L^2(G)} + \varepsilon
\]
for any \(\varepsilon>0\).
Since \(\Cred(G)\cdot A\subseteq B\),
we get \(E^\Psi(f_1 a f_2)\in A^\Psi\) and
\[
\norm{E^\Psi(f_1 a f_2)} \le \norm{f_1}_2 \norm{a} \norm{f_2}_2
\]
if \(f_1,f_2\in \Cred(G)\cap L^2(G)\), \(a\in A\).

\begin{theorem}
  \label{the:nc_map_Rieffel}
  Let~\(G\)
  be a locally compact Abelian group with a \(2\)\nb-cocycle~\(\Psi\).
  Let \(A\idealin \cmp{A}\)
  be a noncommutative coarse space with a continuous \(G\)\nb-action
  by translations.  Let \(f\in L^2(G)\cap \Cred(G)\)
  satisfy \(\norm{f}_2=1\).
  Then \(\cmp{\varphi}(a) \defeq E^\Psi(f a f^*)\)
  for \(a\in\cmp{A}\)
  and the same formula for \(\varphi = \cmp{\varphi}|_A\)
  and \(\bdy{\varphi}\)
  define a noncommutative coarse map from \(A\idealin \cmp{A}\)
  to \(A^\Psi\idealin \cmp{A}{}^\Psi\).
  The map \(\bdy{\varphi}\)
  is the canonical isomorphism \(\bdy{A} = \bdy{A}^\Psi\)
  for any~\(f\).  All these maps for different~\(f\) are close.
\end{theorem}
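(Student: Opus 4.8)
The plan is to verify Definition~\ref{def:coarse_map} directly for the triple $(\varphi,\cmp\varphi,\bdy\varphi)$. The crucial structural observation is that the assignment $a\mapsto E^\Psi(faf^*)$ is defined for \emph{every} $G$-$\Cst$-algebra, not just for~$A$. Applying it to the equivariant extension $A\into\cmp{A}\prto\bdy{A}$ and using that $\Cred(G)\cdot\cmp{A}\subseteq\cmp{A}\rtimes G$ exactly as $\Cred(G)\cdot A\subseteq B$, I get elements $E^\Psi(faf^*)\in\cmp{A}{}^\Psi$ for $a\in\cmp{A}$; the target extension $A^\Psi\into\cmp{A}{}^\Psi\prto\bdy{A}{}^\Psi$ is the one produced by Theorem~\ref{the:Rieffel_deform_translation_action}. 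Since $E^\Psi$ commutes with equivariant morphisms---this is functoriality of Rieffel deformation~\ref{enum:RD1} together with equivariance of the integration over~$\hat G$---the three maps assemble into a commuting diagram, and in particular $\varphi=\cmp\varphi|_A$ lands in~$A^\Psi$.

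That $\cmp\varphi$ is completely positive is immediate: conjugation $a\mapsto faf^*$ is completely positive, and $E^\Psi=\int_{\hat G}\hat\alpha^\Psi_\gamma(\blank)\,\mathrm{d}\gamma$ is an integral of \Star{}automorphisms; the norm estimate $\norm{E^\Psi(faf^*)}\le\norm f_2^2\norm a=\norm a$ gives contractivity. For unitality I compute $\cmp\varphi(1)=E^\Psi(ff^*)$, where now $ff^*\in\Cred(G)$. The deformation of the dual action conjugates by unitaries lying in the abelian algebra $\Cred(G)$ (see the proof of Lemma~\ref{lem:Rieffel_trivial}), so $\hat\alpha^\Psi_\gamma$ and $\hat\alpha_\gamma$ agree on~$\Cred(G)$; writing $ff^*=\lambda(f*\tilde f)$ with $\tilde f(g)=\conj{f(g^{-1})}$ and using that the undeformed $E$ picks out the value of the convolution kernel at the identity, $E(\lambda(h))=h(e)\cdot 1$, yields $E^\Psi(ff^*)=(f*\tilde f)(e)\cdot 1=\norm f_2^2\cdot 1=1$.

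The boundary map is handled next, and this simultaneously settles closeness. Because $G$ acts by translations, its action on~$\bdy{A}$ is trivial, so~\ref{enum:RD3} gives $\bdy{A}{}^\Psi=\bdy{A}$ and, by Lemma~\ref{lem:Rieffel_trivial}, the deformed dual action on $\bdy{A}\rtimes G=\bdy{A}\otimes\Cst(G)$ coincides with the undeformed one. Any $a\in\bdy{A}$ is fixed by the dual action and commutes with $\Cred(G)$, so $\bdy\varphi(a)=E^\Psi(a\cdot ff^*)=a\,E^\Psi(ff^*)=a$. Hence $\bdy\varphi=\Id$; this is a \Star{}homomorphism and, under the identification of Theorem~\ref{the:Rieffel_deform_translation_action}, the canonical isomorphism $\bdy{A}=\bdy{A}^\Psi$. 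Note that $\bdy\varphi$ being a \Star{}homomorphism is exactly the asymptotic multiplicativity $\cmp\varphi(a_1a_2)-\cmp\varphi(a_1)\cmp\varphi(a_2)\in A^\Psi$ required of a coarse map. Moreover $\bdy\varphi=\Id$ is independent of~$f$, so by Definition~\ref{def:close}---two noncommutative coarse maps are close precisely when they induce the same boundary map---all the maps for different~$f$ are close.

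The one genuinely analytic point, and the step I expect to be the main obstacle, is strict continuity of~$\cmp\varphi$. Since $\cmp\varphi$ is contractive it suffices to treat bounded nets, and my plan is to reduce this to nondegeneracy of $\varphi\colon A\to A^\Psi$ and the general principle recorded after Definition~\ref{def:coarse_map}, whereby a nondegenerate completely positive map extends uniquely to a strictly continuous unital completely positive map on multipliers. The difficulty is that $\cmp{A}$ carries the strict topology from~$\Mult(A)$ while the integral defining~$E^\Psi$ lives in~$\Mult(B)$ with $B=A\rtimes G$; bridging the two requires the factorisation $f=f'f''$ from the Cohen--Hewitt theorem so that $f''a\in\cmp{A}\rtimes G$, the strict convergence of Kasprzak's integral, the norm bound $\norm{E^\Psi(f_1bf_2)}\le\norm{f_1}_2\norm b\norm{f_2}_2$, and the nondegeneracy of the Landstad algebra~$A$ in~$\Mult(B)$. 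Carrying these estimates through to identify $\cmp\varphi$ with the canonical strict extension of~$\varphi$ is where the real work of the proof lies; everything else is forced by the computations above.
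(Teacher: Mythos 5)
Your argument coincides with the paper's on every substantive point but one.  The commuting diagram via naturality of \(E^\Psi\) with respect to equivariant morphisms, complete positivity of \(a\mapsto E^\Psi(faf^*)\) as a strict limit of integrals of completely positive maps, the identification \(\bdy{\varphi}(a)=a\cdot E^\Psi(f*f^*)=\norm{f}_2^2\,a=a\) using that \(\bdy{A}\) commutes with \(\Cred(G)\) and is fixed by the deformed dual action, unitality by the same computation, and closeness for different~\(f\) from the independence of~\(\bdy{\varphi}\) on~\(f\) --- all of this is exactly the paper's proof.  Your evaluation of the constant via \(E(\lambda(h))=h(e)\cdot 1\) and the observation that \(\hat\alpha^\Psi\) agrees with \(\hat\alpha\) on the commutative algebra \(\Cred(G)\) is a correct, if anything more explicit, version of the paper's computation of \(\norm{f}_2^2=1\).

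The one genuine gap is the step you yourself flag: strict continuity is announced as a plan but not carried out, and the route you propose (identifying \(\cmp{\varphi}\) with the abstractly constructed strict extension of~\(\varphi\)) is more roundabout than what is needed.  The paper verifies strict continuity directly by testing against elements of the deformed algebra: for \(a_2\in A^\Psi\) one has \(\hat\alpha^\Psi_\gamma(a_2)=a_2\), hence \(a_2\cdot E^\Psi(faf^*)=E^\Psi(a_2\, f a f^*)\) and \(E^\Psi(faf^*)\cdot a_2=E^\Psi(f a f^*\, a_2)\).  The elements \(a_2 f\) and \(f^* a_2\) lie in the Banach \(A\)\nb-module \(\Cred(G,A)\cap L^2(G,A)\), on which \(A\) acts nondegenerately; by Cohen--Hewitt, \(a_2 f=\xi\cdot a'\) with \(a'\in A\), so if \(a\to 0\) strictly in \(\Mult(A)\) then \(a_2 f a=\xi\cdot(a'a)\to 0\) in the module norm, and the estimate \(\norm{E^\Psi(f_1 b f_2)}\le\norm{f_1}_2\norm{b}\norm{f_2}_2\) then forces \(a_2\cdot E^\Psi(faf^*)\to 0\) in norm (and symmetrically on the right).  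This is precisely the seminorm statement defining strict continuity into \(\Mult(A^\Psi)\), so no comparison with a canonical extension is required.  With that short argument inserted, your proof is complete and agrees with the paper's.
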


\begin{proof}
  The naturality of the construction of~\(\varphi\)
  gives a commuting diagram as in Definition~\ref{def:coarse_map}.  A
  map of the
  form \(a\mapsto f a f^*\) is always completely positive.  Since
  automorphisms are \Star{}homomorphisms, they are completely
  positive, and so is an integral over a family of completely
  positive maps.  Thus \(a\mapsto \int_{K} \hat\alpha^\Psi_\gamma(f
  a f^*)\,\textup{d}\gamma\) is a completely positive map \(A\mapsto
  \Mult(B)\) for any compact subset \(K\subseteq \hat{G}\).
  Finally, we let \(K\to\hat{G}\) and take a strict limit.  This
  still gives a completely positive map.  So \(a\mapsto E^\Psi(f a
  f^*)\) is a completely positive map \(A\to A^\Psi\) for any \(f\in
  \Cred(G)\cap L^2(G)\).  If \(a_2\in A^\Psi\), then
  \(\hat\alpha^\Psi_\gamma(a_2)=a_2\) for all \(\gamma\in \hat{G}\).  So
  \[
  a_2\cdot E^\Psi(f a f^*)
  = E^\Psi(a_2\cdot f a f^*),\qquad
  E^\Psi(f a f^*)\cdot a_2
  = E^\Psi(f a f^*\cdot a_2).
  \]
  The products \(a_2 f\) and~\(f^* a_2\) belong to \(\Cred(G,A)\cap
  L^2(G,A)\), on which~\(A\) acts nondegenerately.  So \(a_2 f a\)
  and \(a f^* a_2\) go to~\(0\) in the norm of \(\Cred(G,A)\cap
  L^2(G,A)\) if \(a\) goes to~\(0\) strictly.  This suffices for
  \(a_2\cdot E^\Psi(f a f^*)\) and \(E^\Psi(f a f^*)\cdot a_2\) to
  go to~\(0\) in norm if \(a\) goes to~\(0\) strictly.  That is, the
  map \(a\mapsto E^\Psi(f a f^*)\) is strictly continuous.

  Since the \(G\)\nb-action on~\(\bdy{A}\) is trivial, \(\bdy{A}\)
  and~\(\Cred(G)\) commute in~\(B\); so
  \[
  E^\Psi(f a f^*) = a\cdot
  \int_{\hat{G}} \hat\alpha^\Psi_\gamma(f* f^*) \,\textup{d}\gamma
  \]
  in \(\bdy{A}^\Psi\) for all \(a\in \bdy{A}\).  The integral above
  is a constant multiple of the identity element, where the constant
  is \(\bigl| \hat{f}(1) \bigr|{}^2 = \norm{f}_2^2 = 1\) by the
  normalisation assumption.  So the map \(a\mapsto E^\Psi(f a f^*)\)
  is the identity map from \(\bdy{A}\subseteq\Mult(B)\) to
  \(\bdy{A}^\Psi\subseteq \Mult(B)\), which is a
  \Star{}homomorphism.  Since it does not depend on~\(f\), the maps
  for different~\(f\) are all close to each other.  The same
  computation ensures that \(\cmp{\varphi}(1)=1\).
\end{proof}

\begin{proposition}
  \label{pro:Rieffel_coarse_equivalence}
  Let \(A\idealin \cmp{A}\)
  be a noncommutative coarse space with a continuous \(G\)\nb-action
  by translations.  Let \(\Psi_1,\Psi_2\)
  be continuous \(2\)\nb-cocycles
  on~\(G\).  Define noncommutative coarse maps
  \begin{align*}
    (\varphi_{01},\cmp{\varphi}_{01},\bdy{\varphi}_{01})&\colon
    (A\idealin \cmp{A}) \to (A^{\Psi_1}\idealin \cmp{A}{}^{\Psi_1}),\\
    (\varphi_{12},\cmp{\varphi}_{12},\bdy{\varphi}_{12})&\colon
    (A^{\Psi_1}\idealin \cmp{A}{}^{\Psi_1})
    \to
    \bigl((A^{\Psi_1})^{\Psi_2}\idealin
    (\cmp{A}{}^{\Psi_1})^{\Psi_2}\bigr),\\
    (\varphi_{02},\cmp{\varphi}_{02},\bdy{\varphi}_{02})&\colon
    (A\idealin \cmp{A}) \to (A^{\Psi_1+\Psi_2}\idealin
    \cmp{A}{}^{\Psi_1+\Psi_2}).
  \end{align*}
  Identify \(A^{\Psi_1+\Psi_2} \cong (A^{\Psi_1})^{\Psi_2}\) and
  \((\cmp{A}{}^{\Psi_1})^{\Psi_2} \cong \cmp{A}{}^{\Psi_1+\Psi_2}\) as
  in~\ref{enum:RD4}.  Then \(\varphi_{02}\) is close to
  \(\varphi_{12}\circ\varphi_{01}\).
\end{proposition}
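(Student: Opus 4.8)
The plan is to reduce the statement to the final assertion of Definition~\ref{def:close}, which says that two noncommutative coarse maps with the same source and target are close if and only if they induce the same \Star{}homomorphism on the boundary. Since noncommutative coarse maps compose, the composite $\varphi_{12}\circ\varphi_{01}$ is again a noncommutative coarse map, and it has the same source $A\idealin\cmp{A}$ as $\varphi_{02}$. Thus it suffices to check that $\varphi_{02}$ and $\varphi_{12}\circ\varphi_{01}$ have a common target and induce the same boundary map.

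First I would settle the identifications. Because the $G$\nb-action is by translations, it is trivial on~$\bdy{A}$, so \ref{enum:RD3} gives $\bdy{A}^{\Psi_i}=\bdy{A}$ for each cocycle, and the isomorphism $(\cmp{A}{}^{\Psi_1})^{\Psi_2}\cong\cmp{A}{}^{\Psi_1+\Psi_2}$ of \ref{enum:RD4} restricts on the boundary to the identity $\bdy{A}=\bdy{A}$. This makes the targets of $\varphi_{02}$ and $\varphi_{12}\circ\varphi_{01}$ literally equal, so comparing them is meaningful. Next, by the last sentence of Theorem~\ref{the:nc_map_Rieffel}, each of $\bdy{\varphi}_{01}$, $\bdy{\varphi}_{12}$ and $\bdy{\varphi}_{02}$ is the canonical isomorphism $\bdy{A}=\bdy{A}^{\Psi}$, which under the above identifications is $\Id_{\bdy{A}}$, independently of the functions~$f$ used to define the maps. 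Since the boundary quotient is functorial on noncommutative coarse maps, $\bdy{(\varphi_{12}\circ\varphi_{01})}=\bdy{\varphi}_{12}\circ\bdy{\varphi}_{01}=\Id_{\bdy{A}}=\bdy{\varphi}_{02}$, and the closeness criterion of Definition~\ref{def:close} finishes the proof.

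The only genuine obstacle is the compatibility step: one has to verify that the purely algebraic identification $(A^{\Psi_1})^{\Psi_2}\cong A^{\Psi_1+\Psi_2}$ of \ref{enum:RD4}, together with its analogue for the compactifications, matches the canonical boundary identifications coming from \ref{enum:RD3}, so that all three boundary maps genuinely live on one and the same $\bdy{A}$. This is a naturality check rather than a computation; once it is in place, the whole statement follows formally from Theorem~\ref{the:nc_map_Rieffel} and functoriality of the boundary, with no analysis of the maps $E^{\Psi}$ themselves required.
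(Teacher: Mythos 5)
Your proposal is correct and follows exactly the paper's (one-line) argument: all three maps induce the canonical isomorphism on the corona algebras by Theorem~\ref{the:nc_map_Rieffel}, and two noncommutative coarse maps with the same source and target are close if and only if they induce the same boundary map. Your extra care about matching the identifications from \ref{enum:RD3} and \ref{enum:RD4} on the boundary is a reasonable elaboration of what the paper leaves implicit.
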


\begin{proof}
  Both \(\varphi_{02}\) and \(\varphi_{12}\circ\varphi_{01}\) induce
  the canonical isomorphism on the corona algebras, hence they are
  close.
\end{proof}

\begin{theorem}
  The coarse map from \(A\idealin \cmp{A}\) to \(A^\Psi\idealin
  \cmp{A}{}^\Psi\) constructed in
  Theorem~\textup{\ref{the:nc_map_Rieffel}} is a coarse
  equivalence.
\end{theorem}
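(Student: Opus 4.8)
The plan is to produce an explicit inverse noncommutative coarse map and then to verify that the two composites are close to the identity maps by looking only at the boundaries. First I would apply Theorem~\ref{the:nc_map_Rieffel} a second time, this time to the noncommutative coarse space \(A^\Psi\idealin \cmp{A}{}^\Psi\). This is allowed: by Theorem~\ref{the:Rieffel_deform_translation_action} the group~\(G\) acts continuously on~\(A^\Psi\) by translations, so the hypotheses of Theorem~\ref{the:nc_map_Rieffel} hold. I would deform with the inverse cocycle~\(\Psi^*\), the one with \(\Psi\Psi^*\) the unit cocycle. Then \((A^\Psi)^{\Psi^*} = A^{\Psi\Psi^*} = A\) and \((\cmp{A}{}^\Psi)^{\Psi^*} = \cmp{A}\) by \ref{enum:RD4} and~\ref{enum:RD5}, so Theorem~\ref{the:nc_map_Rieffel} yields a noncommutative coarse map \((\psi,\cmp{\psi},\bdy{\psi})\) from \(A^\Psi\idealin \cmp{A}{}^\Psi\) back to \(A\idealin \cmp{A}\), whose boundary map \(\bdy{\psi}\) is the canonical identification \(\bdy{A}^\Psi = \bdy{A}\).

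The remaining task is to show that \(\psi\circ\varphi\) is close to \(\mathrm{id}_A\) and that \(\varphi\circ\psi\) is close to \(\mathrm{id}_{A^\Psi}\). The key simplification is Definition~\ref{def:close}: two noncommutative coarse maps with the same source and target are close if and only if they induce the same \Star{}homomorphism on the boundary. Hence no norm estimates are needed, and only the bookkeeping of the boundary maps matters. By Theorem~\ref{the:nc_map_Rieffel}, \(\bdy{\varphi}\) is the canonical isomorphism \(\bdy{A} = \bdy{A}^\Psi\) and \(\bdy{\psi}\) the canonical isomorphism \(\bdy{A}^\Psi = \bdy{A}\). Since deformation by a translation action leaves the boundary literally unchanged (these identifications come from~\ref{enum:RD3}), the two boundary maps are mutually inverse, so \(\bdy{(\psi\circ\varphi)} = \bdy{\psi}\circ\bdy{\varphi} = \mathrm{id}_{\bdy{A}} = \bdy{(\mathrm{id}_A)}\). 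Therefore \(\psi\circ\varphi\) is close to~\(\mathrm{id}_A\); and since likewise \(\bdy{(\varphi\circ\psi)} = \bdy{\varphi}\circ\bdy{\psi} = \mathrm{id}_{\bdy{A}^\Psi} = \bdy{(\mathrm{id}_{A^\Psi})}\), the composite \(\varphi\circ\psi\) is close to~\(\mathrm{id}_{A^\Psi}\). By Definition~\ref{def:coarse_equivalence}, \(\varphi\) is then a coarse equivalence.

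An alternative route for the closeness of the composites is Proposition~\ref{pro:Rieffel_coarse_equivalence} applied with \(\Psi_1 = \Psi\) and \(\Psi_2 = \Psi^*\): it gives that \(\psi\circ\varphi = \varphi_{12}\circ\varphi_{01}\) is close to the map \(\varphi_{02}\) built from the unit cocycle, which under the identification \(A^{\Psi\Psi^*}=A\) of~\ref{enum:RD5} induces the identity on the boundary and is thus close to~\(\mathrm{id}_A\). I do not expect a serious obstacle: all the analytic content — complete positivity, strict continuity, and the computation that the boundary map is the canonical one — was already discharged in Theorem~\ref{the:nc_map_Rieffel}. The one point that genuinely needs care is the justification that Theorem~\ref{the:nc_map_Rieffel} applies to the deformed space \(A^\Psi\idealin \cmp{A}{}^\Psi\), together with the correct matching of the identifications \((A^\Psi)^{\Psi^*}\cong A\) and \((\cmp{A}{}^\Psi)^{\Psi^*}\cong\cmp{A}\) from~\ref{enum:RD4}; once these are in place, the closeness of both composites is a one-line consequence of the boundary characterisation.
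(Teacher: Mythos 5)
Your proposal is correct and follows essentially the same route as the paper: the paper applies Proposition~\ref{pro:Rieffel_coarse_equivalence} with \(\Psi_1=\Psi\), \(\Psi_2=\Psi^*\) and then observes that the resulting self-maps of \(A\idealin\cmp{A}\) are close to the identity because they induce the identity on the corona algebra, which is exactly the boundary-map criterion you invoke (and your ``alternative route'' is verbatim the paper's argument). Your direct comparison of \(\psi\circ\varphi\) with \(\mathrm{id}_A\) merely skips the intermediate map \(\varphi_{02}\), which changes nothing of substance.
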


\begin{proof}
  Apply Proposition~\ref{pro:Rieffel_coarse_equivalence} with
  \(\Psi_1=\Psi\) and \(\Psi_2=\Psi^*\) and use that the
  noncommutative coarse maps from \(A\idealin \cmp{A}\) to itself
  constructed in Theorem~\ref{the:nc_map_Rieffel} are close to
  the identity map because they induce the identity map on the
  corona algebras.
\end{proof}

\section{Coarse maps: the commutative case}
\label{sec:coarse_maps_commutative}

How are our noncommutative coarse maps related to coarse maps between
ordinary coarse spaces?  As we shall see, ordinary coarse maps give
noncommutative coarse maps.  The converse holds for coarse structures
from compactifications with second countable boundary.  It is unclear,
however, for coarse structures defined by metrics.  For commutative
\(\Cst\)\nb-algebras,
we are working with compactifications and not with coarse structures.
Hence we first have to discuss two categories of spaces with different
extra structure.

\begin{definition}
  \label{def:coarse_categories}
  Let~$\Coarse$ be the category of proper coarse spaces with coarse
  maps, that is, maps that are proper and preserve controlled subsets.
  Let~$\Compactified$ be the category of compactified spaces
  $X \subseteq \cmp{X}$, where $X$ is a \(\sigma\)\nb-compact,
  locally compact space with a compactification~$\cmp{X}$; the
  morphisms in~$\Compactified$ from \(X\subseteq \cmp{X}\)
  to \(Y\subseteq \cmp{Y}\)
  are maps \(\cmp{X}\to\cmp{Y}\)
  that map \(X\)
  into~\(Y\)
  and \(\bdy{X}\)
  into~\(\bdy{Y}\),
  and that are continuous on~\(\bdy{X}\).
  Two morphisms in~\(\Compactified\)
  are called \emph{close} if they induce the same map
  \(\bdy{X}\to\bdy{Y}\).
\end{definition}

Let \(X\subseteq \cmp{X}\)
and \(Y\subseteq \cmp{Y}\)
be objects of~\(\Compactified\)
and let \(\cmp{F}\colon \cmp{X}\to\cmp{Y}\)
be a morphism between them.  Then~\(F\)
restricts to a map \(F\colon X\to Y\)
and to a continuous map \(\bdy{F}\colon \bdy{X}\to\bdy{Y}\).
We claim that~\(F\)
determines~\(\cmp{F}\) uniquely.
Let \(x\in \bdy{X}\).
Since~\(X\)
is dense in~\(\cmp{X}\),
there is a net \((x_i)_{i\in I}\)
in~\(X\)
that converges towards~\(x\)
in~\(\cmp{X}\).
Since~\(\cmp{F}\)
is continuous at~\(x\), \(\cmp{F}(x) = \lim F(x_i)\).

Thus we may also define the morphisms in~\(\Compactified\)
as maps \(F\colon X\to Y\)
that have an extension \(\cmp{F}\colon \cmp{X}\to\cmp{Y}\)
that is continuous on~\(\bdy{X}\)
and that maps~\(\bdy{X}\)
to~\(\bdy{Y}\).
The latter condition means that~\(F\)
is proper, that is, preimages of relatively compact subsets of~\(Y\)
are relatively compact in~\(X\).
The map~\(F\)
need not be continuous, just as coarse maps are not required to be
continuous.

\begin{proposition}[\cite{Roe:Lectures}*{Proposition~2.41}]
  \label{pro:Higson_functor_and_back}
  The Higson compactification is part of a functor
  \(H\colon \Coarse\to\Compactified\),
  that is, a coarse map \(X\to Y\)
  has an extension \(\cmp{X}\to\cmp{Y}\)
  that is continuous on~\(\bdy{X}\)
  and maps~\(\bdy{X}\)
  to~\(\bdy{Y}\).  This functor preserves closeness of morphisms.
\end{proposition}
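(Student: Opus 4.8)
The plan is to define $H$ on objects by the Higson compactification and then to build it on morphisms from the pullback of slowly oscillating functions. On objects I would set $H(X)\defeq(X\subseteq\cmp X)$, the Higson compactification of Example~\ref{exa:Higson_compactification}; this is an object of $\Compactified$ because the slowly oscillating functions form a unital $\Cst$\nb-subalgebra of $\Contb(X)$ containing $\Cont_0(X)$ and $X$ is $\sigma$\nb-compact. All of the content is in the morphism map and in the continuity on the boundary.

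The computational heart is the following claim: if $f\colon X\to Y$ is a coarse map, that is (Definition~\ref{def:coarse_categories}) a proper map that preserves controlled subsets, and $g$ is slowly oscillating on $Y$, then $g\circ f$ is a bounded slowly oscillating function on $X$. This uses exactly the two defining properties of $f$: controlled-preservation turns a controlled $E\subseteq X\times X$ into the controlled set $(f\times f)(E)\subseteq Y\times Y$ over which $g$ oscillates slowly, and properness pulls a compact exhaustion of $Y$ back to one of $X$. If $f$ were continuous, then $g\circ f$ would be continuous, so pullback would be a unital \Star{}homomorphism $\Cont(\cmp Y)\to\Cont(\cmp X)$ carrying $\Cont_0(Y)$ into $\Cont_0(X)$; dualising gives a continuous map $\cmp X\to\cmp Y$ that restricts to $f$ on $X$ and to a map $\bdy X\to\bdy Y$ on the boundary, that is, a morphism in $\Compactified$.

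The \emph{main obstacle} is that coarse maps need not be continuous, so $g\circ f$ need not lie in $\Cont(\cmp X)$ and the pullback does not directly dualise. I would resolve this by the standard reduction of coarse geometry: choose a continuous coarse map $f'$ close to $f$, so that $(f(x),f'(x))$ lies in a single controlled subset of $Y\times Y$ for all $x\in X$. The previous paragraph produces a continuous $\cmp{f'}\colon\cmp X\to\cmp Y$, and I define $\cmp f$ to agree with $f$ on $X$ and with $\cmp{f'}$ on $\bdy X$. The glue is a lemma that is immediate from the description of controlled sets preceding Proposition~\ref{pro:compactification2coarse_metrisable}: if a net $(u_i)$ in $Y$ converges to $u\in\bdy Y$ in $\cmp Y$ and $(u_i,v_i)$ lies in one controlled set, then $v_i\to u$ as well. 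Indeed, any subnet of $(v_i)$ has a convergent sub-subnet $v_i\to v$; the limit $(u,v)$ lies in the closure of a controlled set and in $\bdy Y\times\cmp Y$, hence on the diagonal, so $v=u$, and compactness of $\cmp Y$ then forces $v_i\to u$. Applying this with $u_i=f'(x_i)$ and $v_i=f(x_i)$ for a net $x_i\to x\in\bdy X$ shows that $\cmp f$ is continuous at each boundary point and that $\cmp f(\bdy X)\subseteq\bdy Y$; thus $\cmp f$ is a morphism in $\Compactified$ extending $f$.

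Functoriality and preservation of closeness then follow formally. Identities clearly go to identities. For $X\to Y\to Z$ the restrictions to $X$ obviously compose, while on the boundary $f(x_i)\to\bdy f(x)\in\bdy Y$ together with continuity of $\cmp g$ on $\bdy Y$ gives $g(f(x_i))\to\cmp g(\bdy f(x))$, so the boundary maps compose; hence $H(g\circ f)=H(g)\circ H(f)$. Finally, if $f_0$ and $f_1$ are close coarse maps, then $(f_0(x),f_1(x))$ lies in a controlled set for all $x$, and the same lemma, applied to a net $x_i\to x\in\bdy X$, yields $\bdy f_0(x)=\bdy f_1(x)$; thus $H$ preserves closeness. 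The discontinuity of coarse maps is the only real difficulty throughout; everything else is the clean pullback computation and the boundary-diagonal lemma.
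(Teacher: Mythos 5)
Your object-level setup, the pullback computation showing that $g\circ f$ is bounded and slowly oscillating for every Higson function $g$ on $Y$, and the boundary-diagonal lemma you use for gluing are all correct. But the step you yourself identify as the main obstacle is resolved incorrectly: you cannot, in general, ``choose a continuous coarse map $f'$ close to $f$.'' Take $X=\R$, $Y=\Z$ with their standard coarse structures and $f$ the floor function. Any continuous map $\R\to\Z$ is constant, hence not proper, and moreover no continuous map $\R\to\Z$ is close to $f$, since $\{(\lfloor x\rfloor,c)\mid x\in\R\}$ is never controlled in $\Z\times\Z$. So the object $\cmp{f'}$ on which your whole construction of $\cmp{f}$ on the boundary rests need not exist, and the argument collapses exactly at the point where the real difficulty lies.

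The repair is to smooth the pulled-back \emph{functions} rather than the map. Since the coarse structure on $X$ is proper, there is a controlled open neighbourhood of the diagonal, hence a uniformly bounded open cover of the ($\sigma$\nb-compact, paracompact) space $X$ with a subordinate partition of unity $\sum_i\psi_i=1$; choosing base points $x_i\in\supp\psi_i$ one replaces the possibly discontinuous slowly oscillating function $g\circ f$ by the continuous function $x\mapsto\sum_i\psi_i(x)\,g(f(x_i))$, which differs from $g\circ f$ by a function vanishing at infinity because $g\circ f$ oscillates slowly over the controlled set containing all $(x,x_i)$ with $\psi_i(x)\neq0$. This is exactly the device used in the paper's Proposition~\ref{pro:coarse_map_nc} (and in Roe's proof of the cited Proposition~2.41). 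It yields a well-defined unital \Star{}homomorphism $\Cont(\bdy{Y})\to\Cont(\bdy{X})$, $g|_{\bdy{Y}}\mapsto$ the boundary class of the smoothed function, hence a continuous map $\bdy{f}\colon\bdy{X}\to\bdy{Y}$; one then defines $\cmp{f}$ as $f$ on $X$ and $\bdy{f}$ on $\bdy{X}$ and verifies continuity at boundary points with precisely the net argument you already wrote down. Your functoriality and closeness arguments go through unchanged once the extension is constructed this way. Note also that the paper itself does not reprove this statement but cites Roe; the function-smoothing route is the one taken there.
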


Conversely, a compactification~\(\cmp{X}\) of a
\(\sigma\)\nb-compact, locally compact space~\(X\) induces a coarse
structure~\(T(\cmp{X})\) on~\(X\).  Let \(\Compactified_p\subseteq
\Compactified\) be the full subcategory consisting of those
compactified spaces where the subset
in~\eqref{eq:bad_points_for_control} and \(\{(x,x)\mid x\in X\}\) in
\(\cmp{X}\times\cmp{X}\) are separated by neighbourhoods.  This is
equivalent to \(T(\cmp{X})\) being proper, see
Proposition~\ref{pro:compactification2coarse_metrisable}.  It is
also observed there that \(H(\Coarse)\subseteq \Compactified_p\).

\begin{lemma}
  \label{lem:functoriality}
  A morphism~\(\cmp{F}\)
  from \(X\subseteq \cmp{X}\)
  to \(Y\subseteq \cmp{Y}\)
  in~$\Compactified_p$ restricts to a coarse map \(X\to Y\)
  with respect to the coarse structures \(T(\cmp{X})\)
  and~\(T(\cmp{Y})\).
  Thus we get a functor \(T\colon \Compactified_p\to\Coarse\).
  It preserves closeness of morphisms.
\end{lemma}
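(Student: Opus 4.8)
The plan is to verify, for the restriction \(F \defeq \cmp{F}|_X\colon X\to Y\), the two conditions defining a coarse map — properness and preservation of controlled subsets — and then to deduce functoriality and the preservation of closeness essentially formally. Throughout I rely on the fact that \(T(\cmp{X})\) and \(T(\cmp{Y})\) genuinely are proper coarse structures, which holds because both objects lie in \(\Compactified_p\), by Proposition~\ref{pro:compactification2coarse_metrisable}; this is also what guarantees that the restriction lands in \(\Coarse\), so that \(T\) is well defined on objects and morphisms.

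First I would record properness of \(F\), which is already implicit in the discussion preceding the lemma. Let \(K\subseteq Y\) be relatively compact with compact closure \(\cl{K}\subseteq Y\). Since \(\cmp{F}(\bdy{X})\subseteq \bdy{Y}\) is disjoint from \(\cl{K}\), the preimage \(\cmp{F}^{-1}(\cl{K})\) is contained in~\(X\); moreover its closure in~\(\cmp{X}\) cannot meet~\(\bdy{X}\), for if a net in \(\cmp{F}^{-1}(\cl{K})\) converged to some \(x\in\bdy{X}\), then continuity of \(\cmp{F}\) on~\(\bdy{X}\) would force \(\cmp{F}(x)\in\cl{K}\cap\bdy{Y}=\emptyset\). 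Hence \(\cmp{F}^{-1}(\cl{K})\) is relatively compact in~\(X\), and so is \(F^{-1}(K)\subseteq \cmp{F}^{-1}(\cl{K})\).

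The substantive step is that \(F\) preserves controlled subsets. Let \(E\subseteq X\times X\) be controlled for \(T(\cmp{X})\), and let \((F(x_j),F(y_j))_{j}\) be a net in \((F\times F)(E)\) converging in \(\cmp{Y}\times\cmp{Y}\) to a point \((u,v)\) with \(u\in\bdy{Y}\) or \(v\in\bdy{Y}\); I must show \(u=v\). By compactness of \(\cmp{X}\times\cmp{X}\) I may pass to a subnet with \((x_j,y_j)\to(x,y)\) in \(\cmp{X}\times\cmp{X}\). If \(x\in\bdy{X}\) or \(y\in\bdy{X}\), then controlledness of~\(E\) gives \(x=y\in\bdy{X}\), and continuity of \(\cmp{F}\) at this point yields \(u=\cmp{F}(x)=\cmp{F}(y)=v\), as wanted. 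The main obstacle is the remaining case \(x,y\in X\): here I must exclude that the image net escapes to the corona, i.e. that \(F(x_j)\to u\in\bdy{Y}\) while \(x_j\to x\in X\). This is exactly where properness of~\(F\) must be brought to bear, since such a net eventually lies in a relatively compact neighbourhood of~\(x\); confining its image to a relatively compact subset of~\(Y\) — equivalently, promoting ``preimages of relatively compact sets are relatively compact'' to suitable control on forward images of the sets actually occurring in~\(E\) — is the delicate heart of the argument and the step I expect to be hardest to make airtight (one may have to feed in the controlledness of~\(E\), or a controlled neighbourhood of the diagonal supplied by \(\Compactified_p\), rather than relying on properness alone).

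Granting that \(F\) is a coarse map, functoriality is formal. A composite \(\cmp{G}\circ\cmp{F}\) of morphisms in \(\Compactified_p\) is again one: it maps \(X\to Z\) and \(\bdy{X}\to\bdy{Z}\), and it is continuous at each \(x\in\bdy{X}\) because \(\cmp{F}(x)\in\bdy{Y}\), where \(\cmp{G}\) is continuous. Since \(\cmp{F}(X)\subseteq Y\), restriction to the interiors satisfies \((\cmp{G}\circ\cmp{F})|_X=(\cmp{G}|_Y)\circ(\cmp{F}|_X)\), and identities plainly restrict to identities, so \(T\) is a functor. Finally, for preservation of closeness, suppose \(\cmp{F}\) and \(\cmp{G}\) induce the same map \(\bdy{F}=\bdy{G}\colon\bdy{X}\to\bdy{Y}\). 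I run the same net argument on \(\{(F(x),G(x))\mid x\in X\}\): a convergent subnet whose source limit lies in~\(\bdy{X}\) has \(F(x_j)\to\bdy{F}(x)\) and \(G(x_j)\to\bdy{G}(x)=\bdy{F}(x)\), forcing the two coordinates to share their boundary limit, while the interior case is ruled out exactly as in the previous paragraph. Hence this set is controlled in \(Y\times Y\), that is, \(F\) and~\(G\) are close, so \(T\) preserves closeness.
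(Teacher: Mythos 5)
Your strategy coincides with the paper's own: properness of \(F=\cmp{F}|_X\) from continuity of~\(\cmp{F}\) on~\(\bdy{X}\) together with \(\cmp{F}(\bdy{X})\subseteq\bdy{Y}\); control of \((F\times F)(E)\) by lifting a net to~\(E\), passing to a subnet convergent in \(\cmp{X}\times\cmp{X}\), and invoking controlledness of~\(E\) plus continuity of~\(\cmp{F}\) at boundary points; and formal arguments for functoriality and closeness. All the steps you actually carry out are correct. But the step you leave open --- excluding that a subnet of \((\hat{x}_j,\hat{y}_j)\) converging to a point of \(X\times X\) has image converging to a point outside \(Y\times Y\) --- is a genuine gap, and your suspicion is well founded: properness in the sense ``preimages of relatively compact sets are relatively compact'' says nothing about forward images of convergent nets, and neither controlledness of~\(E\) nor membership in~\(\Compactified_p\) constrains what~\(F\) does near an interior point.

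In fact the missing implication is false in the stated generality. Take \(X=Y=\R\) with \(\cmp{X}=\cmp{Y}=[-\infty,+\infty]\), define \(F(x)=x\) for \(\abs{x}\ge1\) and \(F(x)=\tan(\pi x/2)\) for \(\abs{x}<1\), and extend by the identity on \(\{\pm\infty\}\). This~\(\cmp{F}\) maps \(X\) into~\(Y\) and \(\bdy{X}\) into~\(\bdy{Y}\), is continuous at~\(\pm\infty\), and is proper, hence is a morphism in~\(\Compactified_p\); yet \(E=\{(x,y):\abs{x-y}\le1\}\) is controlled for \(T(\cmp{X})\) while \((F\times F)(E)\) contains \(\bigl(F(1-1/n),F(1+1/n)\bigr)\to(+\infty,1)\), so it is not controlled. (The same example with the identity map shows the closeness claim fails too, since both morphisms induce the identity on \(\{\pm\infty\}\).) You should know that the paper's proof takes exactly your route and simply asserts the disputed step (``accumulation points \dots belong to \(Y\times Y\)'') without justification, so it shares this gap; it cannot be closed by a cleverer use of the ingredients at hand, but only by strengthening the hypotheses on morphisms of~\(\Compactified_p\), e.g.\ requiring that~\(F\) map relatively compact subsets of~\(X\) to relatively compact subsets of~\(Y\) (which holds for the maps produced in Proposition~\ref{pro:Higson_functor_and_back} and Theorem~\ref{the:lift_metrisable}, so the paper's later applications survive).
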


\begin{proof}
  Continuity of~\(\cmp{F}\)
  at \(x\in\bdy{X}\)
  means that \(\cmp{F}^{-1}(U)\)
  is a neighbourhood of~\(x\)
  if~\(U\)
  is a neighbourhood of~\(\cmp{F}(x)\).
  Since~\(\cmp{F}\)
  is continuous on~\(\bdy{X}\),
  \(\cmp{F}^{-1}(U)\)
  is a neighbourhood of~\(\bdy{X}\)
  if~\(U\)
  is a neighbourhood of \(\bdy{Y}\supseteq \cmp{F}(\bdy{X})\).
  This means that \(F\defeq \cmp{F}|_X\colon X\to Y\) is proper.

  Let \(E\subseteq X\times X\)
  be controlled in~\(T(\cmp{X})\).
  Equivalently, if \((x_i)_{i\in I}\)
  and~\((y_i)_{i\in I}\)
  are nets in~\(X\)
  that converge in~\(\cmp{X}\)
  with \(\lim x_i\in \bdy{X}\) or \(\lim y_i\in \bdy{X}\)
  and that satisfy \((x_i,y_i)\in E\)
  for all \(i\in I\),
  then \(\lim x_i=\lim y_i\).
  Any net \((x_i,y_i)_{i\in I}\)
  in \((F\times F)(E)\)
  lifts to a net \((\hat{x}_i,\hat{y}_i)_{i\in I}\)
  in~\(E\).
  Since~\(E\)
  is controlled, accumulation points of
  \((\hat{x}_i,\hat{y}_i)_{i\in I}\)
  belong to \(X\times X\)
  or are of the form~\((x,x)\)
  with \(x\in \bdy{X}\).
  Hence all accumulation points of the net \((x_i,y_i)_{i\in I}\)
  in \((F\times F)(E)\)
  belong to \(Y\times Y\supseteq F(X\times X)\)
  or are of the form~\((y,y)\)
  with \(y\in \bdy{Y}\).
  Thus \((F\times F)(E)\)
  is controlled in~\(T(\cmp{Y})\).  So~\(F\) is a coarse map.

  Let \(\cmp{F_1}\)
  and~\(\cmp{F_2}\)
  be close morphisms, that is, \(\bdy{F_1} = \bdy{F_2}\).
  If \(F_1\)
  and~\(F_2\)
  are not close with respect to the coarse structure~\(T(\cmp{Y})\)
  on~\(Y\),
  then \(\{(F_1(x),F_2(x)) \mid x\in X\}\)
  is not controlled.  Hence there is a net \((x_i)_{i\in I}\)
  in~\(X\)
  for which \(\lim F_1(x_i)\)
  and \(\lim F_2(x_i)\)
  exist and are different and one limit belongs to~\(\bdy{Y}\).
  Since~\(\cmp{X}\)
  is compact, we may pass to a subnet of~\((x_i)_{i\in I}\)
  that converges in~\(\cmp{X}\).
  Since \(F_1\)
  and~\(F_2\)
  are proper, one limit must belong to~\(\bdy{X}\).
  Then
  \(\lim F_1(x_i) = \bdy{F_1}(\lim x_i) = \bdy{F_2}(\lim x_i) = \lim
  F_2(x_i)\), contradiction.  Thus \(F_1\) and~\(F_2\) are close.
\end{proof}

\begin{corollary}
  \label{cor:coarse_map_metrisable_boundary}
  If the coarse structures on \(X\)
  and~\(Y\)
  come from proper metrics or from compactifications with second
  countable boundary, then a map \(F\colon X\to Y\)
  is coarse if and only if it is a morphism in~\(\Compactified_p\),
  and two coarse maps are close if and only if they are close as
  morphisms in~\(\Compactified_p\).
\end{corollary}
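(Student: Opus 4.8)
The plan is to assemble the two functors \(H\) and~\(T\) with the recovery statements for the two kinds of compactification. In either case write \(\cmp{X}\) and~\(\cmp{Y}\) for the underlying compactifications: in the metric case \(\cmp{X}\) is the Higson compactification of Example~\ref{exa:Higson_compactification}, and in the second case it is the given compactification with second countable boundary. In both cases the coarse structure on~\(X\) equals \(T(\cmp{X})\) -- by definition in the second case and by Proposition~\ref{pro:metric_coarse_compactification_equivalent} in the metric case. Moreover \(X\) is \(\sigma\)\nb-compact (automatic from a proper metric, since \(X\) is then a countable union of closed balls; assumed in the second case) and \(T(\cmp{X})\) is proper by Proposition~\ref{pro:compactification2coarse_metrisable} (via its first clause for a metric, its second clause when \(\bdy{X}\) is second countable), so \(\cmp{X}\) is an object of~\(\Compactified_p\). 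The crucial recovery statement is that the Higson compactification of \(T(\cmp{X})\) is again~\(\cmp{X}\), that is, \(H(T(\cmp{X}))\cong\cmp{X}\) as compactifications of~\(X\); this is part of Proposition~\ref{pro:compactification2coarse_metrisable} when \(\bdy{X}\) is second countable, and in the metric case it follows by combining Example~\ref{exa:Higson_compactification} with Proposition~\ref{pro:metric_coarse_compactification_equivalent}.

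First I would settle the equivalence of the two conditions on a fixed map \(F\colon X\to Y\). If \(F\) is a morphism in~\(\Compactified_p\), then Lemma~\ref{lem:functoriality} shows it is coarse for \(T(\cmp{X})\) and~\(T(\cmp{Y})\), which are the given coarse structures. Conversely, if \(F\) is coarse, then Proposition~\ref{pro:Higson_functor_and_back} extends it to a morphism \(H(T(\cmp{X}))\to H(T(\cmp{Y}))\) in~\(\Compactified\), continuous on the boundary and mapping boundary into boundary. Transporting along the recovery isomorphisms -- which fix~\(X\) and hence restrict to homeomorphisms on the boundaries -- turns this into a morphism \(\cmp{X}\to\cmp{Y}\) extending~\(F\). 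Since \(\cmp{X},\cmp{Y}\in\Compactified_p\), this is a morphism in~\(\Compactified_p\).

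For the closeness statement I would use that both functors preserve closeness. If coarse maps \(F_1,F_2\) are close, then Proposition~\ref{pro:Higson_functor_and_back} makes their extensions close, so they induce the same boundary map; as the recovery isomorphisms fix~\(X\), this gives \(\bdy{F_1}=\bdy{F_2}\), i.e.\ closeness in~\(\Compactified_p\). Conversely, if \(\bdy{F_1}=\bdy{F_2}\), then Lemma~\ref{lem:functoriality} makes the restrictions close for \(T(\cmp{X})\) and~\(T(\cmp{Y})\), which are the given coarse structures, so \(F_1\) and~\(F_2\) are close as coarse maps.

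The only point requiring care is the recovery isomorphism \(H(T(\cmp{X}))\cong\cmp{X}\) and its compatibility with boundary maps. The compatibility is automatic once one observes that this is an isomorphism of compactifications, namely the identity on~\(X\): since \(X\) is dense and the relevant maps are continuous on the boundary, a morphism is determined by its restriction to~\(X\) (as noted before Proposition~\ref{pro:Higson_functor_and_back}), so the induced boundary map does not change under the identification. Everything else is a direct application of the functoriality in Proposition~\ref{pro:Higson_functor_and_back} and Lemma~\ref{lem:functoriality}.
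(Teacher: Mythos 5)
Your proposal is correct and follows essentially the same route as the paper: the paper's (much terser) proof likewise invokes Propositions~\ref{pro:compactification2coarse_metrisable} and~\ref{pro:metric_coarse_compactification_equivalent} for the recovery statements \(H\circ T(X,\cmp{X})=(X,\cmp{X})\) and \(T\circ H(X,d)=(X,d)\), and then uses that \(H\) and~\(T\) are functors preserving closeness. Your write-up simply fills in the details (properness of the induced coarse structure, fullness of \(\Compactified_p\), and compatibility of the recovery isomorphism with boundary maps) that the paper leaves implicit.
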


\begin{proof}
  Propositions \ref{pro:compactification2coarse_metrisable} and
  \ref{pro:metric_coarse_compactification_equivalent} say that the
  functors \(H\colon \Coarse\to\Compactified_p\)
  and \(T\colon \Compactified_p\to\Coarse\)
  satisfy \(H\circ T(X,\cmp{X}) = (X,\cmp{X})\)
  and \(T\circ H(X,d)=(X,d)\)
  if \(X\subseteq \cmp{X}\)
  is a compactification with second countable boundary or if~\((X,d)\)
  denotes~\(X\)
  with the coarse structure of a metric~\(d\)
  on~\(X\).
  Now use that both \(H\)
  and~\(T\) are functors that preserve closeness.
\end{proof}

What are noncommutative coarse maps in the commutative case?  If
\(\varphi\colon \Cont_0(X)\to\Cont_0(Y)\) is a nondegenerate
completely positive contraction, then so are the maps
\(\varphi_y\colon \Cont_0(X)\to\C\), \(h\mapsto \varphi(h)(y)\), for
\(y\in Y\).  So each~\(\varphi_y\) is a state on~\(\Cont_0(X)\).
All states are of the form \(h\mapsto \int_X
h(x)\,\textup{d}\mu_y(x)\) for a unique probability
measure~\(\mu_y\) on~\(X\).  Let \(\mathcal{P}(X)\) be the set of
probability measures on~\(X\) with the weak topology defined by the
pairing with \(\Cont_0(X)\).  The functions~\(\varphi(h)\) for
\(h\in\Cont_0(X)\) are continuous if and only if the map
\(Y\to\mathcal{P}(X)\), \(y\mapsto\mu_y\), is continuous.  And the
function~\(\varphi(h)\) vanishes at~\(\infty\) if and only if
\(\lim_{y\to\infty} \int_X h(x)\,\textup{d}\mu_y(x)=0\).  So
nondegenerate completely positive contractions
\(\Cont_0(X)\to\Cont_0(Y)\) are equivalent to continuous maps
\(\mu\colon Y\to\mathcal{P}(X)\) that vanish at~\(\infty\).

\begin{lemma}
  \label{lem:nc_coarse_map}
  A continuous map \(\mu\colon Y\to\mathcal{P}(X)\) that vanishes
  at~\(\infty\) corresponds to a noncommutative coarse map from
  \(\Cont_0(X)\idealin \Cont(\cmp{X})\) to \(\Cont_0(Y)\idealin
  \Cont(\cmp{Y})\) if and only if the following holds: if a
  net~\((y_i)_{i\in I}\) converges in~\(\cmp{Y}\) to some
  \(y_\infty\in\bdy{Y}\), then there is \(x_\infty\in \bdy{X}\) such
  that \(\lim_i \int_X h(x)\,\textup{d}\mu_{y_i}(x) = h(x_\infty)\)
  for all \(h\in\Cont(\cmp{X})\).  In brief,
  \[
  \lim_{y\to y_\infty}
  \int_X h(x)\,\textup{d}\mu_y(x) = h(x_\infty)
  \]
  for all \(h\in\Cont(\cmp{X})\).  Two
  continuous maps \(\mu_1,\mu_2\colon
  Y\rightrightarrows\mathcal{P}(X)\) that vanish at~\(\infty\) are
  close as noncommutative coarse maps from \(\Cont_0(X)\idealin
  \Cont(\cmp{X})\) to \(\Cont_0(Y)\) if and only if
  \begin{equation}
    \label{eq:close_nc_maps_measures}
    \lim_{y\to\infty} \int_X h(x)\,\textup{d}\mu_{1,y}(x)
    - \int_X h(x)\,\textup{d}\mu_{2,y}(x) = 0\qquad
    \text{for all }h\in\Cont(\cmp{X}).
  \end{equation}
\end{lemma}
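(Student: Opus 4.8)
The plan is to compute the strictly continuous extension $\cmp{\varphi}$ of the map $\varphi$ attached to $\mu$ explicitly, and then read off the two requirements of Definition~\ref{def:coarse_map} in terms of the measures $\mu_y$. First I would verify that $\cmp{\varphi}=\Mult(\varphi)$ is given by the same integral formula $\cmp{\varphi}(h)(y)=\int_X h\,\textup{d}\mu_y$ for all $h\in\Contb(X)=\Mult(\Cont_0(X))$. Indeed this formula defines a unital, completely positive map $\Contb(X)\to\Contb(Y)$ (each $\mu_y$ is a probability measure) that restricts to $\varphi$ on $\Cont_0(X)$ and is strictly continuous; by the uniqueness of the strictly continuous extension discussed after Definition~\ref{def:coarse_map}, it must coincide with $\Mult(\varphi)$. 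For $h\in\Cont(\cmp{X})$ I write $g_h(y)\defeq\int_X h\,\textup{d}\mu_y\in\Contb(Y)$; in the commutative case $\bdy{A}=\Cont(\bdy{X})$ and $\bdy{B}=\Cont(\bdy{Y})$, so $\bdy{\varphi}$ is a unital, completely positive map between commutative $\Cst$\nb-algebras.

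Next I would translate requirement~(1), that $\cmp{\varphi}$ map $\Cont(\cmp{X})$ into $\Cont(\cmp{Y})$. Since $Y$ is dense in $\cmp{Y}$, this says exactly that each $g_h$ extends continuously to $\cmp{Y}$, that is, along every net $(y_i)$ in $Y$ converging to some $y_\infty\in\bdy{Y}$ the limit $\lim_i g_h(y_i)$ exists and depends only on $y_\infty$. For fixed $y_\infty$ the assignment $h\mapsto\lim_i g_h(y_i)$ is then a state on $\Cont(\cmp{X})$, hence a probability measure $\nu_{y_\infty}$ on $\cmp{X}$. Because $\varphi(\Cont_0(X))\subseteq\Cont_0(Y)$ and any net converging to $y_\infty\in\bdy{Y}$ eventually leaves every compact subset of~$Y$, we get $g_h(y_i)\to0$ for $h\in\Cont_0(X)$; thus $\nu_{y_\infty}$ annihilates $\Cont_0(X)$ and so is supported in $\bdy{X}$. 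The induced boundary map is $\bdy{\varphi}(\bar h)(y_\infty)=\int_{\bdy{X}}\bar h\,\textup{d}\nu_{y_\infty}$.

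The heart of the argument is requirement~(2), that $\bdy{\varphi}$ be a \Star{}homomorphism. As $\bdy{\varphi}$ is automatically unital and positive, it is a \Star{}homomorphism if and only if it is multiplicative, and by Gelfand duality a unital positive map between commutative $\Cst$\nb-algebras is multiplicative if and only if each composite $\ev_{y_\infty}\circ\bdy{\varphi}$ is a character of $\Cont(\bdy{X})$, that is, a point evaluation. Hence requirement~(2) says precisely that $\nu_{y_\infty}=\delta_{x_\infty}$ for some $x_\infty\in\bdy{X}$, which combined with requirement~(1) is exactly the stated condition $\lim_{y\to y_\infty}\int_X h\,\textup{d}\mu_y=h(x_\infty)$. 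I would also note, by an interleaving-of-nets argument, that once this limit equals $h(x_\infty)$ for every convergent net (with an a priori net-dependent $x_\infty$), the point $x_\infty$ is forced to depend only on $y_\infty$, since $\Cont(\cmp{X})$ separates the points of $\cmp{X}$; this reconciles the net formulation with the continuous-extension formulation. I expect this middle step -- correctly identifying the boundary functional $\nu_{y_\infty}$ and recognising that multiplicativity of $\bdy{\varphi}$ is equivalent to each $\nu_{y_\infty}$ being a point mass -- to be the main obstacle, the rest being bookkeeping with the integral formula.

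Finally, the closeness statement is immediate from Definition~\ref{def:close}: two such maps are close if and only if $\bigl(\cmp{\varphi}_1-\cmp{\varphi}_2\bigr)(\Cont(\cmp{X}))\subseteq\Cont_0(Y)$, and a function in $\Contb(Y)$ lies in $\Cont_0(Y)$ exactly when it vanishes at infinity. Applied to $g_h^{1}-g_h^{2}$ for $h\in\Cont(\cmp{X})$, this is precisely~\eqref{eq:close_nc_maps_measures}.
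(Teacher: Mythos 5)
Your proposal is correct and follows essentially the same route as the paper's proof: translate the two requirements of Definition~\ref{def:coarse_map} into conditions on the measures, identify the boundary functional at each \(y_\infty\in\bdy{Y}\) as a probability measure supported on \(\bdy{X}\), and observe that multiplicativity of \(\bdy{\varphi}\) forces these to be point masses, with the closeness claim read off directly from Definition~\ref{def:close}. The paper merely organises the sufficiency direction by constructing the map \(\bdy{\mu}\colon\bdy{Y}\to\bdy{X}\) first and invoking the dense-subset continuity criterion that you also use implicitly.
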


\begin{proof}
  The condition in the lemma for a noncommutative coarse map is
  clearly necessary.  Conversely, assume this
  condition for all convergent nets~\((y_i)_{i\in I}\).  Merging two
  nets with the same limit \(y_\infty\in\bdy{Y}\), we see that they
  give the same \(x_\infty\in\bdy{X}\).  So \(y_\infty\mapsto
  x_\infty\) well-defines a map \(\bdy{\mu}\colon
  \bdy{Y}\to\bdy{X}\).  If \(h\in\Cont(\cmp{X})\), then we define
  \(\cmp{\varphi}(h)\colon \cmp{Y}\to\C\) by \(\cmp{\varphi}(h)(y) =
  \int_X h(x)\,\textup{d}\mu_y(x)\) for \(y\in Y\) and
  \(\cmp{\varphi}(h)(y) = h(\bdy{\mu}(y))\) for \(y\in\bdy{Y}\).
  Then \(\lim \cmp{\varphi}(h)(y_i) = \cmp{\varphi}(h)(\lim y_i)\)
  whenever~\((y_i)_{i\in I}\) is a net in~\(Y\) that converges
  in~\(\cmp{Y}\): if the limit lies in~\(Y\), this is the continuity
  of \(y\mapsto \mu_y\), and if the limit lies in~\(\bdy{Y}\), it is
  the construction of~\(\bdy{\mu}\).  Since~\(Y\) is dense
  in~\(\cmp{Y}\), this already implies the continuity
  of~\(\cmp{\varphi}(h)\) on~\(\cmp{Y}\) (see
  \cite{Banerjee:Thesis}*{Lemma~3.30}).
  So~\(\cmp{\varphi}\) is a noncommutative coarse map.
  Two maps \(\mu_1,\mu_2\colon \Cont_0(X)\rightrightarrows\Cont_0(Y)\)
  are close
  if and only if the strictly continuous extension of
  \(\mu_1-\mu_2\) maps \(\Cont(\cmp{X})\) to~\(\Cont_0(Y)\).  This
  is equivalent to~\eqref{eq:close_nc_maps_measures}.
\end{proof}

\begin{proposition}
  \label{pro:coarse_map_nc}
  Let \(X\subseteq \cmp{X}\)
  and \(Y\subseteq \cmp{Y}\)
  be objects of~\(\Compactified_p\)
  and let \(\cmp{F}\colon \cmp{X}\to\cmp{Y}\)
  be a morphism in~\(\Compactified_p\).
  There is a noncommutative coarse map~\(\varphi\)
  from \(\Cont_0(Y)\idealin \Cont(\cmp{Y})\)
  to \(\Cont_0(X)\idealin \Cont(\cmp{X})\)
  so that \(\bdy{\varphi} = \bdy{F}^*\).
  Two morphisms \(\cmp{F_1}\)
  and~\(\cmp{F_2}\)
  are close if and only if the corresponding noncommutative coarse
  maps are close.
\end{proposition}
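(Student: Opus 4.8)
The plan is to realise~$\varphi$ through a continuous family of probability measures and then invoke Lemma~\ref{lem:nc_coarse_map} with the roles of $X$ and~$Y$ interchanged. Thus I seek a continuous map $\mu\colon X\to\mathcal{P}(Y)$ that vanishes at infinity and satisfies $\lim_{x\to x_\infty}\int_Y h\,\textup{d}\mu_x = h(\bdy{F}(x_\infty))$ for every $x_\infty\in\bdy{X}$ and every $h\in\Cont(\cmp{Y})$; the lemma then produces a noncommutative coarse map $\varphi\colon\Cont_0(Y)\to\Cont_0(X)$ whose boundary map is the \Star{}homomorphism dual to $\bdy{F}$, that is, $\bdy{\varphi}=\bdy{F}^*$. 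The naive choice $\mu_x=\delta_{F(x)}$ already behaves correctly on~$\bdy{X}$, because $\cmp{F}$ is continuous there and $F=\cmp{F}|_X$; but it is discontinuous on the interior, since $F$ itself need not be continuous. The heart of the matter is therefore to smooth $\delta_{F(x)}$ over a controlled neighbourhood without disturbing its limit along the boundary.

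To do this I would use that $X\subseteq\cmp{X}$ lies in~$\Compactified_p$. By Proposition~\ref{pro:compactification2coarse_metrisable} the induced coarse structure $T(\cmp{X})$ is proper, so there is an open controlled neighbourhood $E\subseteq X\times X$ of the diagonal; each section $E[x]=\{y\mid(x,y)\in E\}$ is then a relatively compact (because $E$ is controlled and the coarse structure is proper) open neighbourhood of~$x$, and these cover~$X$. As $X$ is $\sigma$\nb-compact and locally compact, hence paracompact, I choose a locally finite partition of unity $(\psi_j)_{j\in J}$ subordinate to a refinement of $\{E[x]\}_{x\in X}$ by sets $V_j$ with $V_j\subseteq E[x_j]$ for chosen centres $x_j\in X$, so that $x\in V_j$ forces $(x_j,x)\in E$. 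I then set
\[
\mu_x \defeq \sum_{j\in J}\psi_j(x)\,\delta_{F(x_j)}\in\mathcal{P}(Y).
\]
This is a genuine probability measure supported in~$Y$, and $x\mapsto\int_Y g\,\textup{d}\mu_x=\sum_j\psi_j(x)\,g(F(x_j))$ is continuous for every $g\in\Cont_0(Y)$ because the sum is locally finite; so $\mu$ is continuous.

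The key verification is the boundary condition, and this is where the main obstacle lies: I must show that the limiting measure is $\delta_{\bdy{F}(x_\infty)}$ \emph{uniformly} over the indices contributing near~$x_\infty$. The mechanism is controlledness of~$E$: if $(x_j,x_i)\in E$ and $x_i\to x_\infty\in\bdy{X}$ in~$\cmp{X}$, then any accumulation point of $(x_j,x_i)$ has the form $(z,x_\infty)\in\cl{E}$, whence $z=x_\infty$ by the definition of controlled, so $x_j\to x_\infty$ as well. A compactness argument upgrades this to the statement that for each neighbourhood~$W$ of~$x_\infty$ in~$\cmp{X}$ there is a smaller neighbourhood~$W'$ such that $x_i\in W'$ and $(x_j,x_i)\in E$ force $x_j\in W$. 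Since $h$ is continuous and $\cmp{F}$ is continuous at~$x_\infty$, this makes $h(F(x_j))=h(\cmp{F}(x_j))$ close to $h(\cmp{F}(x_\infty))=h(\bdy{F}(x_\infty))$ for all contributing~$j$, and since $\sum_j\psi_j(x_i)=1$ the convex combination $\int_Y h\,\textup{d}\mu_{x_i}$ converges to $h(\bdy{F}(x_\infty))$. The same argument, applied after extending $h\in\Cont_0(Y)$ by zero on~$\bdy{Y}$ and using that $\cmp{F}$ sends $\bdy{X}$ into~$\bdy{Y}$, shows that $\mu$ vanishes at infinity. Lemma~\ref{lem:nc_coarse_map} then yields~$\varphi$ with $\bdy{\varphi}=\bdy{F}^*$. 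For the closeness statement I would simply note that, by Definition~\ref{def:close}, two noncommutative coarse maps are close precisely when they induce the same boundary \Star{}homomorphism; since $\bdy{\varphi_k}=\bdy{F_k}^*$ and Gelfand duality makes $\bdy{F}\mapsto\bdy{F}^*$ injective, the maps $\varphi_1$ and~$\varphi_2$ are close if and only if $\bdy{F_1}=\bdy{F_2}$, which is exactly closeness of $\cmp{F_1}$ and~$\cmp{F_2}$ in~$\Compactified$.
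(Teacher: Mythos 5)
Your construction is essentially the paper's own proof: an open controlled neighbourhood of the diagonal, a subordinate partition of unity $(\psi_j)$ with chosen centres $x_j$, and the map $h\mapsto\sum_j\psi_j(\cdot)\,h(F(x_j))$, with the boundary behaviour extracted from controlledness and the continuity of $\cmp{F}$ on $\bdy{X}$, and the closeness claim reduced to equality of boundary maps. Packaging the verification through probability measures and Lemma~\ref{lem:nc_coarse_map} rather than checking the axioms directly is only a cosmetic difference, so the proposal is correct and follows the paper's route.
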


\begin{proof}
  By assumption, there is a neighbourhood~\(U\)
  of the diagonal in \(X\times X\)
  that is controlled with respect to the coarse
  structure~\(T(\cmp{X})\)
  associated to~\(\cmp{X}\).
  The open subsets \(V\subseteq X\)
  with \(V\times V\subseteq U\)
  cover~\(X\).
  Since~\(X\)
  is locally compact and \(\sigma\)\nb-compact,
  it is paracompact.  So there is a partition of unity
  \(\sum_{i\in \N} \psi_i = 1\)
  with \((\supp \psi_i)^2\subseteq U\).
  Choose \(x_i\in \supp \psi_i\)
  and let \(y_i\defeq F(x_i)\)
  for \(i\in\N\).
  Define
  \[
  (\varphi h)(x) \defeq \sum_{i\in\N} h(y_i)\psi_i(x)
  \]
  for \(h\in\Cont_0(Y)\),
  \(x\in X\).
  We claim that~\(\varphi\)
  is a noncommutative coarse map.

  The map~\(\varphi\)
  is a completely positive contraction because \(\psi_i\ge0\)
  for all \(i\in\N\)
  and \(\sum \psi_i=1\).
  The value~\(\varphi(h)(x)\)
  is a convex combination of \(h(F(x_i))\)
  for those \(i\in\N\)
  with \(\psi_i(x)\neq0\).
  So \((x,x_i)\in(\supp\psi_i)^2 \subseteq U\).
  This allows to prove both that
  \(\varphi(\Cont_0(Y))\subseteq \Cont_0(X)\)
  and that \(\lim \varphi(u_i)=1\)
  if~\((u_i)\)
  is an approximate unit for~\(\Cont_0(Y)\).
  So~\(\varphi\)
  is a nondegenerate completely positive contraction
  \(\Cont_0(Y)\to\Cont_0(X)\).
  If \(h\in\Cont(\cmp{Y})\),
  then \(\abs{h(F(x))-h(F(x_i))}\to0\)
  as \(x\to\bdy{X}\)
  in~\(U\).
  Thus the function \(x\mapsto h\circ F(x) - \varphi(h)(x)\)
  on~\(X\)
  vanishes at~\(\infty\).
  Since the function~\(h\circ \cmp{F}\)
  on~\(\cmp{X}\)
  is continuous on~\(\bdy{X}\),
  where~\(\cmp{F}\)
  is continuous, it follows that \(\varphi(h)\)
  is continuous on~\(\bdy{X}\).
  Since continuity on~\(X\)
  is clear, we get
  \(\varphi(\Cont(\cmp{Y})) \subseteq \Cont(\cmp{X})\).
  Furthermore, \(\bdy{F}^* = \bdy{\varphi}\)
  as desired.  By definition, \(\cmp{F_1}\)
  and~\(\cmp{F_2}\)
  are close if and only if \(\bdy{F_1} = \bdy{F_2}\),
  if and only if \(\bdy{\varphi_1} = \bdy{\varphi_2}\),
  if and only if \(\varphi_1\) and~\(\varphi_2\) are close.
\end{proof}

It is unclear, in general, whether any noncommutative coarse map from
\(\Cont_0(Y)\idealin \Cont(\cmp{Y})\)
to \(\Cont_0(X)\idealin \Cont(\cmp{X})\)
comes from a morphism \(\cmp{F}\colon \cmp{X}\to\cmp{Y}\)
in~\(\Compactified_p\).
Theorem~\ref{the:lift_metrisable} will show this
if~\(\bdy{Y}\)
is second countable.  Even more, any unital \Star{}homomorphism
\(\Cont(\bdy{Y})\to\Cont(\bdy{X})\)
lifts to a morphism in~\(\Compactified_p\).
The authors tried without success to prove this for Higson
compactifications of metric coarse structures.
We mention partial results that we obtained in this direction.
The problem whether every noncommutative coarse map
\(\varphi\colon \Cont_0(Y)\to\Cont_0(X)\)
lifts to an ordinary coarse map is invariant under coarse
equivalence by Proposition~\ref{pro:coarse_map_nc}.
Any (proper) coarse space is coarsely equivalent to a discrete one.
This reduces the problem to the case where \(X\)
and~\(Y\)
are countable sets equipped with proper metrics.  If a noncommutative
coarse map does not lift to an ordinary coarse map, then this is
witnessed by evaluation at a sequence of points~\((x_n)_{n\in\N}\)
in~\(X\)
that goes to~\(\infty\).
Any subsequence of~\((x_n)_{n\in\N}\)
still witnesses the non-existence of a lifting.  We may pass to a
subsequence to arrange that \(d(x_n,x_m)>\abs{2^n-2^m}\)
for all \(n,m\in\N\).
Replacing~\(X\)
by the subset \(\{x_n\mid n\in\N\}\),
we get another counterexample where \(X=\N\)
with the discrete coarse structure.  This is the unique coarse
structure on~\(\N\)
where the Higson compactification is the Stone--\v{C}ech
compactification, that is, \(\Cont_0(X)\idealin \Cont(\cmp{X})\)
is \(\Cont_0(\N)\idealin \ell^\infty(\N)\).

Composing~\(\varphi\)
with evaluation at \(x\in X\)
gives a state on~\(\Cont_0(Y)\),
that is, a probability measure on~\(Y\).
We may arrange these probability measures to have finite supports that
go to~\(\infty\)
for \(x\to\infty\)
without changing~\(\bdy{\varphi}\).
Hence we may assume this without loss of generality.  Passing to
another subsequence, we may then arrange that the finite supports of
these probability measures for different points in \(X=\N\)
are disjoint.  Then we may replace~\(Y\)
be the disjoint union of these supports.  So if a non-liftable
noncommutative coarse map exists, then it exists in the case where
\(X=\N\)
with the discrete coarse structure, \(Y\)
is a box space \(Y=\bigsqcup Y_n\), and
\[
(\varphi h)(n) = \sum_{y\in Y_n}  c(n,y) h(n,y)
\]
for all \(h\in\Cont_0(Y)\),
where \(c(n,y)\)
for \(y\in Y_n\)
are the point masses of a probability measure on~\(Y_n\)
for each \(n\in\N\).
The challenge is to show that if such a map~\(\varphi\)
induces a \Star{}homomorphism on
\(\Cont(\cmp{Y})/\Cont_0(Y) \to \Cont(\cmp{X})/\Cont_0(X)\),
then there are points \(y_n\in Y_n\)
so that \(\lim_{n\to\infty} (\varphi h)(n) - h(n,y_n)=0\).

\section{Coarse maps from the corona algebra}
\label{sec:coarse_from_boundary}

In this section, we consider noncommutative coarse spaces with nuclear
and separable boundary and \(\sigma\)\nb-compact
interior.  Among such noncommutative coarse spaces, any
\Star{}homomorphism between the boundaries lifts to a noncommutative
coarse map, which is automatically unique up to closeness.  This
follows easily from the following theorem, which compares a given
noncommutative coarse space to the cone over its boundary in
Example~\ref{exa:standard_coarse_space}.

\begin{theorem}
  \label{the:coarse_map_from_standard}
  Let \(A\idealin \cmp{A}\) be a noncommutative coarse space.
  Assume that \(\bdy{A} = B\) is separable and nuclear and
  that~\(A\) is \(\sigma\)\nb-unital.  There is a noncommutative
  coarse equivalence from \(\Cont_0(\N,B) \idealin \Cont(\N^+,B)\)
  to \(A\idealin \cmp{A}\) that induces the identity map on the
  corona algebras.
\end{theorem}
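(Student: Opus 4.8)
The plan is to realise the asserted coarse equivalence by a pair of noncommutative coarse maps, \(\varphi\) from the cone \(\Cont_0(\N,B)\idealin\Cont(\N^+,B)\) to \(A\idealin\cmp{A}\) and \(\psi\) in the reverse direction, each inducing the identity on the common corona \(B\); the equivalence then follows formally. Two structural inputs drive everything. Since \(\cmp{A}\) is unital, \(B=\cmp{A}/A\) is unital, and since \(B\) is separable and nuclear the Choi--Effros lifting theorem furnishes a unital completely positive section \(s\colon B\to\cmp{A}\) of the quotient map \(\pi\colon\cmp{A}\to B\), so that \(\pi\circ s=\Id_B\). Since \(A\) is \(\sigma\)\nb-unital, I would then fix an increasing approximate unit \((u_n)_{n\in\N}\) of \(A\) with \(u_0=0\) that is quasi-central relative to the separable set \(s(B)\); after passing to a subsequence the commutators \([u_n,s(b)]\) decay summably and the layers \(w_n\defeq(u_{n+1}-u_n)^{1/2}\) are almost orthogonal, while \(\sum_n w_n^2=1\) strictly.

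For the forward map I set \(\varphi\bigl((b_n)_n\bigr)\defeq\sum_n w_n\,s(b_n)\,w_n\). As \(w_n\in A\) each summand lies in \(A\), the series converges in norm because \(b_n\to0\) and the layers are almost orthogonal, and \(\varphi\) is a nondegenerate completely positive contraction. Its strict extension \(\cmp{\varphi}\colon\Cont(\N^+,B)\to\cmp{A}\) is unital, using \(s(1)=1\) and \(\sum_n w_n^2=1\). On a constant sequence with value \(b\) one computes \(\cmp{\varphi}(b)=\sum_n w_n^2 s(b)+\sum_n w_n[s(b),w_n]=s(b)+r_b\), where \(r_b\in A\) by the summable quasi-centrality bound (first for \(b\) in the chosen dense set, then for all \(b\) since \(\pi\circ\cmp{\varphi}\) is contractive); hence \(\pi(\cmp{\varphi}(b))=\pi(s(b))=b\). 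A general convergent sequence differs from its constant limit by an element of \(\Cont_0(\N,B)\), which \(\cmp{\varphi}\) maps into \(A\), so \(\bdy{\varphi}=\Id_B\).

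The reverse map \(\psi\) is the hard part. What is needed is a strictly continuous unital completely positive map \(\cmp{\psi}\colon\cmp{A}\to\Cont(\N^+,B)\), \(\cmp{\psi}(x)=(\psi_n(x))_n\), whose components \(\psi_n\colon\cmp{A}\to B\) converge pointwise to \(\pi\) and yet restrict to \emph{nondegenerate} maps \(A\to B\) for each \(n\); then \(\psi\defeq\cmp{\psi}|_A\) lands in \(\Cont_0(\N,B)\) because \(\psi_n(a)\to\pi(a)=0\), is nondegenerate because each \(\psi_n\) is, and has boundary value \(\lim_n\psi_n(x)=\pi(x)\), giving \(\bdy{\psi}=\Id_B\). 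The obstruction is that \(\pi\) annihilates the ideal \(A\), and the same holds for any map of the inner form \(x\mapsto\pi(v^*xv)\) with \(v\in\Mult(A)\), so the \(\psi_n\) cannot be built inside \(\Mult(A)\); they must play the role of evaluation at interior points tending to the boundary. I would obtain them from the minimal Stinespring dilation \(s(b)=V^*\rho(b)V\) of the lift, with \(\rho\colon B\to\Bound(\mathcal{E})\) a nondegenerate representation on a Hilbert \(A\)\nb-module and \(V\colon A\to\mathcal{E}\) an isometry, and use the layers \(w_n\) to manufacture a family of \(B\)-valued states on \(\cmp{A}\) that remain nondegenerate on \(A\) but are pushed towards the boundary as \(n\to\infty\); here separability and nuclearity are exactly what make such a family available and force its pointwise limit to be \(\pi\). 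Verifying nondegeneracy, strict continuity, and the limit \(\psi_n\to\pi\) is where the real work lies.

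Granting both maps, the equivalence is immediate. By Definition~\ref{def:close} two noncommutative coarse maps are close precisely when they induce the same \Star{}homomorphism on the coronas, and the boundary assignment is functorial, so \(\bdy{(\psi\circ\varphi)}=\bdy{\psi}\circ\bdy{\varphi}=\Id_B=\bdy{(\Id)}\) and likewise \(\bdy{(\varphi\circ\psi)}=\Id_B\). Thus \(\psi\circ\varphi\) and \(\varphi\circ\psi\) are close to the identity maps, so \(\varphi\) is a noncommutative coarse equivalence in the sense of Definition~\ref{def:coarse_equivalence}, and by construction it induces the identity on the corona algebras. The entire difficulty of the proof is therefore concentrated in the construction of the interior maps \(\psi_n\) in the third step.
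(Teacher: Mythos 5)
Your construction of the forward map \(\varphi\) is essentially identical to the paper's: a Choi--Effros section \(s\colon B\to\cmp{A}\), a quasi-central approximate unit with layers \(w_n=(u_{n+1}-u_n)^{1/2}\) chosen along a subsequence so that \(\norm{[w_n,s(b)]}\) is summable on a dense set, and \(\varphi(f)=\sum w_n s(f(n)) w_n\). (One small point: norm convergence of the series is best justified not by ``almost orthogonality'' of the layers but by the telescoping positivity bound \(\sum_{i=n}^N w_i s(f(i)) w_i \le \varepsilon\,(u_{N+1}-u_n)\le\varepsilon\) for \(f\ge0\) with \(\norm{f(i)}\le\varepsilon\) for \(i\ge n\).) Your verification that \(\bdy{\varphi}=\Id_B\) and the formal closing argument via functoriality of the boundary are also correct and match the paper.

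The genuine gap is the reverse map \(\psi\), and you have said so yourself: you describe the properties the components \(\psi_n\colon\cmp{A}\to B\) must have, correctly observe that no inner compression \(x\mapsto\pi(v^*xv)\) can work, and then defer the actual construction to an unspecified use of the Stinespring dilation of \(s\). That sketch does not obviously close: the natural maps one extracts from a dilation of \(s\) still factor through \(\pi\) and annihilate \(A\), which is exactly the obstruction you identified. The paper's mechanism is quite different and is where nuclearity is really used. One factors \(\Id_B\) approximately as \(\beta_k\circ\alpha_k\) through matrix algebras \(\Mat_{n(k)}(\C)\) and sets \(\psi(a)(k)=\beta_k\gamma_k(a)\) for completely positive contractions \(\gamma_k\colon A\to\Mat_{n(k)}(\C)\). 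These \(\gamma_k\) are produced by the correspondence between completely positive maps into \(\Mat_{n}(\C)\) and positive functionals on \(\Mat_{n}(A)\): since \(b\mapsto\varphi(\const b)\) is a faithful completely positive map \(B\to\cmp{A}\) (even modulo \(A\)), states of the form \(l\circ\varphi^{(n(k))}\circ\const\) with \(l\) a state on \(\Mat_{n(k)}(A)\) are weak\(^*\)-dense in the states of \(\Mat_{n(k)}(B)\), so one can choose \(\gamma_k\) with \(\Mult(\gamma_k)(\varphi(\const b))\approx\alpha_k(b)\) on finite sets while also forcing \(\gamma_k(u'_k)<2^{-k}\); the latter condition is what makes \(\psi(A)\subseteq\Cont_0(\N,B)\) and the former gives \(\lim_k\psi(\varphi(\const b))(k)=b\), hence \(\bdy{\psi}=\Id\). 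Without this (or some equally concrete) construction, the coarse equivalence is not established; as your proposal stands, only one direction of the equivalence has been proved.
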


\begin{proof}
  We first construct a noncommutative coarse map from
  \(\Cont_0(\N,B) \idealin \Cont(\N^+,B)\) to \(A\idealin \cmp{A}\)
  that induces the identity map on the corona algebras, following ideas
  of~\cite{Arveson:Extensions_Cstar}.  For this, we need two
  ingredients.  The first is a unital, completely positive section
  \(\sigma\colon \bdy{A}\to\cmp{A}\) for the quotient map
  \(\pi\colon \cmp{A}\prto\bdy{A}\), which exists by the Choi--Effros
  Lifting Theorem because~\(\bdy{A}\) is separable and nuclear
  (see~\cite{Choi-Effros:CP_lifting}).  The second ingredient is a
  sequential approximate unit of~\(A\) that is quasi-central with
  respect to~\(\sigma(\bdy{A})\).  This is an increasing sequence
  \(0= u_0\le u_1 \le u_2 \le \dotsb\) in~\(A\), such that \(\lim
  \norm{u_n a-a}=0\) for all \(a\in A\) and \(\lim \norm{u_n
    \sigma(b) - \sigma(b) u_n}=0\) for all \(b\in B\).  This exists
  because~\(A\) is \(\sigma\)\nb-unital and~\(B\) is separable.  The
  quickest way to deduce this from
  \cite{Arveson:Extensions_Cstar}*{Theorem 1} is by replacing~\(A\)
  by the separable \(\Cst\)\nb-subalgebra~\(A'\) that is generated
  by some sequential approximate unit (which exists by the
  \(\sigma\)\nb-unitality assumption) and~\(\sigma(B)\).

  Let \((F_n)_{n\in\N}\) be an increasing sequence of finite subsets
  of~\(B\) such that \(\bigcup F_n\) is dense in~\(B\); this exists
  because~\(B\) is separable.  We also assume that \(1\in F_0\).  We
  are going to specify a subsequence \(u'_i \defeq u_{n(i)}\) for
  some increasing function \(\N\to\N\), \(i\mapsto n(i)\), with
  \(n(0)=0\).  Let
  \(e_i \defeq (u'_{i+1} - u'_i)^{1/2}\) for \(i\in\N\); this is a
  sequence of positive elements in~\(A\) with \(\sum_{i=0}^{n-1}
  e_i^2 = u'_n\).  Hence \(\sum_{i=0}^\infty e_i^2 = 1\) with strict
  convergence.  By a lemma in~\cite{Arveson:Extensions_Cstar}, for
  each \(\varepsilon>0\) there is \(\delta>0\) such that
  \(\norm{[e_i,\sigma(b)]}<\varepsilon\) holds for all \(b\in B\)
  and \(i\in\N\) with \(\norm{[u'_i,\sigma(b)]} +
  \norm{[u'_{i+1},\sigma(b)]}<\delta\).  Now choose~\(\delta\) for
  \(\varepsilon = 2^{-i}\) and then choose \(n(i)\ge n(i-1)\) so
  that \(\norm{[u'_j,\sigma(b)]} <\delta/2\) for all \(j\ge n(i)\)
  and all \(b\in F_i\).  For this subsequence~\((n(i))_{i\in\N}\),
  we get
  \[
  \norm{[e_i,\sigma(b)]} < 2^{-i}\qquad
  \text{for all }b\in F_i,\ i\in\N.
  \]

  We define
  \[
  \varphi\colon \Cont_0(\N,B) \to A,\qquad
  \varphi(f) \defeq \sum_{i=0}^\infty e_i \sigma(f(i)) e_i.
  \]
  We claim that this sum is norm convergent for \(f\in\Cont_0(\N,B)\).
  It suffices to check this for \(f\ge0\).
  Given \(\varepsilon>0\)
  there is \(n\in\N\)
  with \(\norm{f(i)}\le \varepsilon\) for \(i\ge n\).  Hence
  \[
  0\le \sum_{i=n}^N e_i \sigma(f(i)) e_i
  \le \sum_{i=n}^N e_i \norm{f(i)} e_i
  \le \varepsilon \sum_{i=n}^N e_i^2
  = \varepsilon (u'_{N+1}-u'_n)
  \le \varepsilon
  \]
  if \(N\ge n\).
  This verifies the Cauchy criterion for convergence.  Since
  \(\sigma\)
  is completely positive and \(e_i\ge0\),
  each summand is a completely positive map.  Thus so is~\(\varphi\).

  Let \(1_{[0,n]}\in\Cont_0(\N,B)\) be the characteristic function
  of \([0,n]\); this is an approximate unit in~\(\Cont_0(\N,B)\).
  It is mapped to the sequence
  \(\sum_{i=0}^n e_i^2 = u'_{n+1}\) because~\(\sigma\) is unital.
  Since~\((u'_{n+1})\) is an approximate
  unit in~\(A\), the map~\(\varphi\) is nondegenerate.
  Thus~\(\varphi\) extends uniquely to a strictly continuous map
  from \(\Mult(\Cont_0(\N,B)) \cong \ell^\infty(\N,B)\)
  to~\(\Mult(A)\).  This map still has the form \(f \mapsto
  \sum_{i=0}^\infty e_i \sigma(f(i)) e_i\) for
  \(f\in\ell^\infty(\N,B)\), now with strict convergence of the
  infinite sum.  If \(b\in F_n\), then \(\norm{[e_i,\sigma(b)]} <
  2^{-i}\) for \(i\ge n\).  For the constant function with
  value~\(b\), this implies
  \[
  \sigma(b) - \sum_{i=0}^\infty e_i \sigma(b) e_i
  = \sum_{i=0}^\infty (e_i^2 \sigma(b) - e_i \sigma(b) e_i)
  = \sum_{i=0}^\infty e_i [e_i,\sigma(b)],
  \]
  which is norm-convergent in~\(A\).
  So \(\sigma(b) - \varphi(\const(b)) \in A\)
  for all \(b\in F_n\)
  for all~\(n\),
  and hence for all \(b\in B\);
  here~\(\const b\)
  denotes the constant function \(\N\to B\)
  with value~\(b\).
  Since~\(\varphi\)
  maps \(\Cont_0(\N,B)\)
  and \(\const(b)\)
  for \(b\in B\)
  into~\(\cmp{A}\),
  it maps~\(\Cont(\N^+,B)\)
  to~\(\cmp{A}\).
  The induced map on the corona algebras is the identity map
  \(B\to\bdy{A}\).

  Next we construct a noncommutative coarse~\(\psi\)
  map from \(A\idealin \cmp{A}\)
  to \(\Cont_0(\N,B) \idealin \Cont(\N^+,B)\)
  that induces the identity map \(\bdy{A} \to B\).
  This is inverse to the map constructed above up to closeness, so
  both maps form a coarse equivalence.
  The construction of~\(\psi\) uses the map~\(\varphi\) above, and
  some extra data, namely, an approximation of the identity map
  on~\(B\) by maps of the form \(\beta_k\circ\alpha_k\), \(k\in\N\),
  with unital completely positive maps \(\alpha_k\colon
  B\to\Mat_{n(k)}(\C)\) and \(\beta_k\colon \Mat_{n(k)}(\C)\to B\)
  for some matrix sizes~\(n(k)\) for \(k\in\N\); this exists
  because~\(B\) is nuclear and separable.  Our Ansatz is to define
  \[
  \psi\colon A\to \Cont_0(\N,B),\qquad
  \psi(a)(k) \defeq \beta_k\gamma_k(a)
  \quad\text{for }k\in\N,\ a\in A,
  \]
  where the maps \(\gamma_k\colon A\to\Mat_{n(k)}(\C)\) are
  completely positive contractions with the following extra
  properties:
  \begin{enumerate}
  \item \label{enum:gammak_1} \(\lim_{k\to\infty} \gamma_k(u'_n)=0\)
    for each \(n\in\N\);
  \item \label{enum:gammak_2} \(\lim_{n\to\infty} \gamma_k(u'_n)=1\)
    for each \(k\in\N\);
  \item \label{enum:gammak_3} let \(\Mult(\gamma_k)\colon
    \Mult(A)\to\Mat_{n(k)}(\C)\) be the unique strictly continuous
    extension of~\(\gamma_k\) (see below for its existence); then
    \[
    \lim_{k\to\infty}
    \abs{\Mult(\gamma_k)(\varphi(\const b)) - \alpha_k(b)}
    =0
    \]
    for all \(b\in B\), where \(\varphi\colon \Cont(\N^+,B)\to
    \cmp{A}\subseteq \Mult(A)\) is the map built above.
  \end{enumerate}
  First we show that~\(\psi\) is a noncommutative coarse map
  that induces the identity map on the corona algebra if the
  maps~\((\gamma_k)_{k\in \N}\) have the properties listed above.
  Since the maps \(\gamma_k\) and~\(\beta_k\) are completely
  positive contractions, so are the maps \(\beta_k \gamma_k\).
  Hence~\(\psi\) is a well-defined completely positive contraction
  \(A\to\ell^\infty(\N,B)\).  It maps the approximate
  unit~\((u_n')\) into~\(\Cont_0(\N,A)\) by
  property~\ref{enum:gammak_1}.  Then
  \begin{multline*}
    \psi(a (u_n')^{1/2})^* \psi(a (u_n')^{1/2})
    \le \psi\bigl((u_n')^{1/2} a^* a (u_n')^{1/2}\bigr)
    \\\le \norm{a} \psi\bigl((u_n')^{1/2} (u_n')^{1/2}\bigr)
    \in \Cont_0(\N,B)
  \end{multline*}
  for all \(a\in A\) by \cite{Lance:Hilbert_modules}*{Lemma 5.3}.
  Since \(\Cont_0(\N,B)\) is an ideal in \(\ell^\infty(\N,B)\), this
  implies \(\psi(a (u_n')^{1/2})\in \Cont_0(\N,B)\) for \(a\in A\),
  and then \(\psi(a)\in \Cont_0(\N,B)\) because \((u_n')^{1/2}\) is
  an approximate unit.  Thus \(\psi(A)\subseteq \Cont_0(\N,B)\).

  Since~\(\psi\) is completely positive, the sequence \(\psi(u_n')\)
  in \(\Cont_0(\N,B)\) is increasing.  Property~\ref{enum:gammak_2}
  of the maps~\(\gamma_k\) says that \(\psi(u_n')(k) \to 1\) for
  each \(k\in\N\) because \(\beta_k(1)=1\).  Thus \(\psi(u_n')\) is
  an approximate unit in~\(\Cont_0(\N,B)\), so~\(\psi\) is
  nondegenerate.  Thus~\(\psi\) has a unique strictly continuous
  extension \(\Mult(\psi)\colon \Mult(A) \to \Mult(\Cont_0(\N,B))
  \cong \ell^\infty(\N,B)\).  Evaluation at \(k\in\N\) gives
  \(\Mult(\psi)(a)(k) = \beta_k\circ \Mult(\gamma_k)(a)\) for the
  unique strictly continuous extension~\(\Mult(\gamma_k)\)
  of~\(\gamma_k\).

  We have \(\cmp{A} = A + \varphi(\const B)\)
  because \(\cmp{A}/A = \bdy{A} = B\)
  and \(B\ni b\mapsto \varphi(\const b)\in \cmp{A}\)
  is a section for this extension.  Since
  \(\psi(A)\subseteq \Cont_0(\N,B)\),
  we have \(\Mult(\psi)(\cmp{A})\subseteq \Cont(\N^+,B)\)
  if and only if \(\Mult(\psi)(\varphi(\const b))\in \Cont(\N^+,B)\)
  for all \(b\in B\).  Property~\ref{enum:gammak_3} implies
  \[
  \lim_{k\to\infty} \Mult(\psi)(\varphi(\const(b)))
  = \lim_{k\to\infty} \beta_k \alpha_k(b)
  = b
  \]
  because \(\norm{\beta_k}\le 1\) for all \(k\in\N\).  Thus
  \(\psi(\cmp{A})\subseteq \Cont(\N^+,B)\) and~\(\psi\) induces the
  identity map \(\bdy{A}\to B\) on the corona algebras.
  Hence~\(\psi\) and~\(\varphi\) give a coarse equivalence between
  \(A\idealin \cmp{A}\) and \(\Cont_0(\N,B) \idealin
  \Cont(\N^+,B)\).

  It remains to find maps \(\gamma_k\colon A\to\Mat_{n(k)}(\C)\)
  that satisfy \ref{enum:gammak_1}--\ref{enum:gammak_3}.  First we fix
  \(k\in\N\).
  A completely positive linear map
  \(\gamma_k\colon A\to\Mat_{n(k)}(\C)\)
  is equivalent to a positive linear functional
  \(\tilde\gamma_k\colon \Mat_{n(k)}(A)\to \C\),
  see \cite{Paulsen:Completely_bounded}*{Chapter 6}.  Moreover,
  \(\norm{\gamma_k} = \norm{\tilde\gamma_k}\).
  The same construction for~\(B\)
  turns the given completely positive unital maps~\(\alpha_k\)
  into states \(\tilde\alpha_k\colon \Mat_{n(k)}(B)\to\C\).
  Any positive linear functional on a \(\Cst\)\nb-algebra~\(A\)
  extends to the multiplier algebra~\(\Mult(A)\)
  by extending its GNS-representation, and this extension is strictly
  continuous.  Then the corresponding completely positive linear map
  \(A\to\Mat_{n(k)}(\C)\)
  is strictly continuous as well.  Thus any completely positive map
  \(\gamma_k\colon A\to\Mat_{n(k)}(\C)\)
  is strictly continuous and extends to~\(\Mult(A)\).

  The unital, completely positive map \(b\mapsto \varphi(\const b)\)
  from~\(B\) to~\(\cmp{A}\) is faithful because it induces an
  isomorphism \(B\to\bdy{A}\).  Therefore, a self-adjoint element
  \(b\in \Mat_{n(k)}(B)\) is positive if and only if
  \(\varphi^{(n(k))}(\const b) \ge0\).  Equivalently,
  \(l\bigl(\varphi^{(n(k))}(\const b)\bigr) \ge0\) for every
  state~\(l\) on~\(\Mat_{n(k)}(A)\), where we also write~\(l\) for
  the unique strictly continuous extension of~\(l\)
  to~\(\Mat_{n(k)}(\Mult(A))\).  Therefore, any state
  on~\(\Mat_{n(k)}(B)\) is contained in the weak\(^*\)-closed convex
  hull of the set of states of the form~\(\varphi^*(l)\) with
  \[
  \varphi^*(l)(b)\defeq l\bigl(\varphi^{(n(k))}(\const b)\bigr)
  \]
  for states~\(l\) on~\(\Mat_{n(k)}(A)\) (see
  \cite{Dixmier:Cstar-algebras}*{Lemma 3.4.1}).  Since this set of
  states is already convex, any state on~\(\Mat_{n(k)}(B)\) is a
  weak\(^*\)-limit of states of the form~\(\varphi^*(l)\) for
  states~\(l\) on~\(\Mat_{n(k)}(A)\).  In particular, we may
  approximate the state \(\tilde\beta_k\colon \Mat_{n(k)}(B)\to\C\)
  pointwise by states of the form \(\varphi^*(\tilde\gamma_k)\) for
  a state \(\tilde\gamma_k\colon \Mat_{n(k)}(A)\to\C\).

  Thus we may approximate \(\beta_k\colon B\to\Mat_{n(k)}(\C)\)
  pointwise by maps of the form
  \(\Mult(\gamma_k)\circ\varphi\circ\const\) with completely
  positive contractions \(\gamma_k\colon A\to\Mat_{n(k)}(\C)\).
  Choose the increasing sequence of finite subsets \(F_i\subseteq
  B\) as above.  There is a completely positive contraction
  \(\gamma_k\colon A\to \Mat_{n(k)}(\C)\) with
  \(\abs{\Mult(\gamma_k)\circ \varphi(\const b) - \beta_k(b)} <
  2^{-k}\) for \(b\in F_k\).  Choosing such~\(\gamma_k\) for each
  \(k\in\N\), we get maps~\(\gamma_k\) that
  verify~\ref{enum:gammak_3} for all \(b\in \bigcup F_i\) and hence
  for all \(b\in B\).  Since the map \(\tilde\gamma_k\colon
  \Mat_{n(k)}(A)\to\C\) corresponding to~\(\gamma_k\) is a state,
  \(\lim_{i\to\infty} \tilde\gamma_k(u'_i\cdot 1_{\Mat_{n(k)}\C}) =
  1\).  This is equivalent to \(\lim_{i\to\infty} \gamma_k(u'_i)=
  1\).  Thus~\ref{enum:gammak_2} is also built into our
  construction.  To also verify~\ref{enum:gammak_1}, we refine the
  construction above slightly.
  The map \(\varphi\circ\const\colon B\to \cmp{A}\) remains faithful
  when we project to~\(\Mult(A)/A\).  Hence for each \(i\ge0\),
  \(\varepsilon>0\), states~\(l\) of \(\Mat_{n(k)}(A)\) with
  \(l(u'_i)<\varepsilon\) still detect whether self-adjoint elements
  of~\(\Mat_{n(k)}(B)\) are positive.  So we may choose~\(\gamma_k\)
  above so that, say, \(\gamma_k(u'_k)<2^{-k}\).  Then also
  \(\gamma_k(u'_i)<2^{-k}\) for \(i\le k\) because \(u'_i\le u'_k\).
  Thus we have also arranged for~\ref{enum:gammak_1} to hold.
\end{proof}

In particular, if~\(\bdy{A}\) is separable and commutative, then
Theorem~\ref{the:coarse_map_from_standard} gives a coarse
equivalence between \(A\idealin \cmp{A}\) and a commutative coarse
space.  Our proof needs separability of~\(\bdy{A}\) in a crucial
way.  It seems unlikely that similar results hold for non-separable
commutative boundaries.

\begin{theorem}
  \label{the:lift_boundary_map}
  Let \(A_1\idealin \cmp{A_1}\) and \(A_2\idealin \cmp{A_2}\) be
  noncommutative coarse spaces.  Assume that the corona
  algebra~\(\bdy{A_1}\) is nuclear and separable and that \(A_1\)
  and~\(A_2\) are \(\sigma\)\nb-unital.  Then any
  \Star{}homomorphism \(f\colon \bdy{A_1}\to\bdy{A_2}\) lifts to a
  noncommutative coarse map from \(A_1\idealin \cmp{A_1}\) to
  \(A_2\idealin \cmp{A_2}\).  All such liftings are close.
\end{theorem}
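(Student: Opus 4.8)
The plan is to deduce this from Theorem~\ref{the:coarse_map_from_standard} by factoring the desired lift through the cone over \(B_1 \defeq \bdy{A_1}\). Write also \(B_2 \defeq \bdy{A_2}\) and let \(\pi_2\colon \cmp{A_2}\prto B_2\) be the quotient map. Since a noncommutative coarse map has a unital extension~\(\cmp{\varphi}\), its induced boundary map is unital; as \(B_1\) and~\(B_2\) are unital, we may therefore assume that \(f\) is a unital \Star{}homomorphism. First I would apply Theorem~\ref{the:coarse_map_from_standard} to \(A_1\) itself, which is legitimate because \(\bdy{A_1}=B_1\) is separable and nuclear and \(A_1\) is \(\sigma\)\nb-unital. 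This yields a noncommutative coarse equivalence between \(\Cont_0(\N,B_1)\idealin \Cont(\N^+,B_1)\) and \(A_1\idealin \cmp{A_1}\); in particular a noncommutative coarse map \(\psi\colon A_1\to \Cont_0(\N,B_1)\) inducing the identity map \(B_1\to B_1\) on the corona algebras.

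The main step is to build a noncommutative coarse map \(\varphi_f\colon \Cont_0(\N,B_1)\to A_2\) whose boundary map is~\(f\). For this I would reuse verbatim the construction carried out in the first half of the proof of Theorem~\ref{the:coarse_map_from_standard}, noting that it uses only a unital completely positive lift~\(\sigma\) of the relevant boundary map---the identity section there, and \(f\) here. Since \(B_1\) is separable and nuclear, the Choi--Effros Lifting Theorem (\cite{Choi-Effros:CP_lifting}) provides a unital completely positive map \(\sigma_f\colon B_1\to \cmp{A_2}\) with \(\pi_2\circ\sigma_f=f\). As \(A_2\) is \(\sigma\)\nb-unital and \(\sigma_f(B_1)\) is separable, the same reasoning as in Theorem~\ref{the:coarse_map_from_standard} (via \cite{Arveson:Extensions_Cstar}) supplies a sequential approximate unit \((u_n)\) of~\(A_2\) that is quasi-central with respect to~\(\sigma_f(B_1)\); passing to a suitable subsequence and setting \(e_i \defeq (u_{i+1}-u_i)^{1/2}\) as there, I define
\[
\varphi_f(g) \defeq \sum_{i=0}^{\infty} e_i\, \sigma_f\bigl(g(i)\bigr)\, e_i,
\qquad g\in\Cont_0(\N,B_1).
\]
The verification that \(\varphi_f\) is a nondegenerate completely positive contraction mapping \(\Cont(\N^+,B_1)\) into~\(\cmp{A_2}\) is word-for-word the argument in Theorem~\ref{the:coarse_map_from_standard}; the only change is that the quasi-centrality estimate now gives \(\sigma_f(b)-\varphi_f(\const(b))\in A_2\), so the induced boundary value of \(\const(b)\) is \(\pi_2(\sigma_f(b))=f(b)\), whence \(\bdy{\varphi_f}=f\).

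Finally I would compose. Noncommutative coarse maps compose, so \(\varphi_f\circ\psi\colon A_1\to A_2\) is a noncommutative coarse map from \(A_1\idealin \cmp{A_1}\) to \(A_2\idealin \cmp{A_2}\), and its boundary map is \(\bdy{\varphi_f}\circ\bdy{\psi}=f\circ\Id_{B_1}=f\). This is the required lift. Uniqueness up to closeness is then immediate from Definition~\ref{def:close}: two noncommutative coarse maps between these spaces are close precisely when they induce the same \Star{}homomorphism on the boundaries, and every lifting of~\(f\) induces~\(f\). The only genuinely substantive observation is that the construction in Theorem~\ref{the:coarse_map_from_standard} is insensitive to whether \(\sigma\) lifts the identity or an arbitrary unital \Star{}homomorphism---and, crucially, that all the separability and nuclearity it consumes concerns the \emph{domain} \(B_1\) alone. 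Thus no hypothesis on~\(\bdy{A_2}\) is needed, which is exactly why one must map directly out of the cone over~\(B_1\) rather than attempt to apply Theorem~\ref{the:coarse_map_from_standard} to~\(A_2\) as well.
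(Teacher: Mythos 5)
Your proposal is correct and takes essentially the same route as the paper's proof: factor the lift through the cone over \(\bdy{A_1}\), lift \(f\) to a unital completely positive map \(\sigma_f\colon \bdy{A_1}\to\cmp{A_2}\) by Choi--Effros, and feed a quasi-central approximate unit into the sum \(\sum e_i\sigma_f(\cdot)e_i\). The only difference is organisational: the paper passes to the separable subalgebra \(A_2'\subseteq A_2\) generated by an approximate unit and \(\sigma_f(\bdy{A_1})\) so that Theorem~\ref{the:coarse_map_from_standard} can be invoked as a black box for \(A_2'\idealin A_2'+\sigma_f(\bdy{A_1})\), whereas you re-run the first half of that theorem's proof with the section \(\sigma\) replaced by~\(\sigma_f\); both work, your variant sidesteps the minor point that \(\bdy{A_2'}\) is only a quotient of \(\bdy{A_1}\), and your explicit remark that \(f\) must be unital (since \(\cmp{\varphi}\) is required to be unital) is a welcome precision the paper leaves implicit.
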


\begin{proof}
  Theorem~\ref{the:coarse_map_from_standard} gives a coarse
  equivalence between \(A_1\idealin \cmp{A_1}\)
  and \(\Cont_0(\N,\bdy{A_1})\idealin \Cont(\N^+,\bdy{A_1})\).
  Since~\(\bdy{A_1}\)
  is separable and nuclear, the Choi--Effros Lifting Theorem lifts the
  \Star{}homomorphism \(f\colon \bdy{A_1}\to\bdy{A_2}\)
  to a completely positive, unital map
  \(\sigma_f\colon \bdy{A_1}\to \cmp{A_2}\).
  Let \((u_n)_{n\in\N}\)
  be a sequential approximate unit for~\(A_2\).
  Let \(A_2' \subseteq A_2\)
  be the \(\Cst\)\nb-subalgebra
  generated by elements of the form \(u_n\cdot \sigma_f(x)\)
  and \(\sigma_f(x)\cdot u_n\)
  for \(n\in\N\),
  \(x\in\bdy{A_1}\).
  Since~\(\bdy{A_1}\)
  is separable, so is~\(A_2'\).
  The approximate unit~\((u_n)\)
  lies in~\(A_2'\)
  and is an approximate unit there, so the map
  \(A_2'\hookrightarrow A_2\)
  is nondegenerate and induces an inclusion
  \(\Mult(A_2') \hookrightarrow \Mult(A_2)\).
  The elements \(\sigma_f(x)\in\cmp{A_2}\)
  for \(x\in \bdy{A_1}\)
  are also multipliers of~\(A_2'\).
  So \(\cmp{A_2'} = A_2' + \sigma_f(B)\subseteq \cmp{A_2}\)
  gives another noncommutative coarse space
  \(A_2'\idealin \cmp{A_2'}\)
  contained in \(A_2\idealin \cmp{A_2}\).
  Since \(\cmp{A_2'}/A_2'\) as a quotient of~\(\bdy{A_1}\)
  is separable and nuclear, this new noncommutative coarse space
  \(A_2'\idealin \cmp{A_2'}\)
  is also coarsely equivalent to
  \(\Cont_0(\N,\bdy{A_1})\idealin \Cont(\N^+,\bdy{A_1})\).
  Composing the coarse equivalences from \(A_1\idealin \cmp{A_1}\)
  to \(\Cont_0(\N,\bdy{A_1})\idealin \Cont(\N^+,\bdy{A_1})\)
  and on to \(A_2'\idealin \cmp{A_2'}\)
  with the inclusion \(A_2'\hookrightarrow A_2\)
  gives the desired lifting of~\(f\).
  Two noncommutative coarse maps are close if and only if they induce
  the same map on the boundaries.
\end{proof}

\begin{remark}
  \label{rem:coarse_map_bdy}
  The first part of the proof of
  Theorem~\ref{the:coarse_map_from_standard} still gives a
  noncommutative coarse map from \(\Cont_0(\N,B) \idealin
  \Cont(\N^+,B)\) to \(A\idealin \cmp{A}\) that induces the identity
  map on the corona algebras, assuming only that there is a
  completely positive unital section \(\bdy{A}\to\cmp{A}\) and a
  sequential approximate unit for~\(A\) that is quasi-central
  in~\(\cmp{A}\).  These sufficient assumptions are also necessary.
  First, since the boundary quotient map \(\Cont(\N^+,B) \prto B\)
  has an obvious completely positive, unital section for any~\(B\),
  a noncommutative coarse map from \(\Cont_0(\N,B) \idealin
  \Cont(\N^+,B)\) to \(A\idealin \cmp{A}\) that induces the identity
  map on the boundary can only exist if \(\cmp{A}\prto \bdy{A}\) has
  a completely positive, unital section.  Secondly,
  \((1_{[0,n]})_{n\in\N}\) is a quasi-central approximate unit for
  \(\Cont_0(\N,B) \idealin \Cont(\N^+,B)\) because~\(B\) is unital.
  A noncommutative coarse map to \(A\idealin \cmp{A}\) maps it to an
  approximate unit for~\(A\) because it is nondegenerate.  A
  computation using the Stinespring Dilation and that the induced
  map on the corona algebras is an isomorphism shows that the
  image of this approximate unit is quasi-central with respect
  to~\(\cmp{A}\).

  If~\(\cmp{A}\) is commutative and~\(A\) \(\sigma\)\nb-unital, then
  there certainly exists a quasi-central approximate unit.  We do
  not know, however, whether there is a completely positive section
  \(\bdy{A} \to \cmp{A}\): the Choi--Effros Lifting Theorem only
  applies if~\(\bdy{A}\) is separable.
\end{remark}

\section{Lifting maps between metrisable boundaries}
\label{sec:lift_metrisable}

The following theorem is a version of
Theorem~\ref{the:lift_boundary_map} for ordinary coarse
spaces.  We have not seen this in the literature.  Recall the
category of compactified spaces~\(\Compactified_p\) introduced in
Section~\ref{sec:coarse_maps_commutative}.

\begin{theorem}
  \label{the:lift_metrisable}
  Let \(X \subseteq \cmp{X}\) and \(Y\subseteq \cmp{Y}\) be objects
  of~\(\Compactified_p\).  Let \(\varphi\colon
  \bdy{X}\to\bdy{Y}\) be a continuous map.  If the
  boundary~\(\bdy{Y}\) is metrisable, then there is a morphism
  \(F\colon X\to Y\) in~\(\Compactified_p\) with boundary
  values~\(\varphi\), and any two such morphisms are close.  The
  map~\(F\) is coarse with respect to the coarse structures defined
  by the compactifications.
\end{theorem}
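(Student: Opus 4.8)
The plan is to convert the metrisability of~\(\bdy Y\) into honest metric control by embedding into the Hilbert cube, and then to build~\(F\) by selecting, for each interior point, a ``deep'' point of~\(Y\) that points in the correct boundary direction. Since~\(\bdy Y\) is compact metrisable, \(\Cont(\bdy Y)\) is separable, so I would choose functions~\(f_k\) whose restrictions to~\(\bdy Y\) separate points and extend each to \(f_k\in\Cont(\cmp Y;[0,1])\) by the Tietze theorem (valid since~\(\cmp Y\) is compact Hausdorff, hence normal, and~\(\bdy Y\) is closed). These assemble to a continuous map \(\iota=(f_k)_k\colon \cmp Y\to[0,1]^{\N}\) whose restriction to~\(\bdy Y\) is a homeomorphism onto its image; I do \emph{not} need~\(\iota\) to be injective on the interior~\(Y\). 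Pulling back along~\(\varphi\) and extending coordinatewise, again by Tietze on the normal space~\(\cmp X\), I obtain a continuous \(G\colon \cmp X\to[0,1]^{\N}\) with \(G|_{\bdy X}=\iota\circ\varphi\); in particular \(G(\bdy X)\subseteq\iota(\bdy Y)\).

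Next I would fix a compact exhaustion \(K_0\subseteq K_1\subseteq\dotsb\) of the \(\sigma\)\nb-compact space~\(X\) with \(K_m\subseteq\mathrm{int}_X(K_{m+1})\), and set \(m(x)\defeq\sup\{m : x\notin K_m\}\); this is finite on~\(X\) and tends to~\(\infty\) as \(x\to\bdy X\), because each compact~\(K_m\) is closed in~\(\cmp X\) and misses~\(\bdy X\). For each \(x\in X\) I would pick \(w_x\in\bdy Y\) realising the distance from~\(G(x)\) to the compact set~\(\iota(\bdy Y)\), and then—using that~\(Y\) is dense in~\(\cmp Y\), so that points of~\(Y\) accumulate at \(w_x\in\bdy Y\) and are eventually outside any fixed compact subset of~\(Y\)—choose \(F(x)\in Y\setminus K_{m(x)}\) with \(\mathrm{dist}(\iota(F(x)),\iota(w_x))<2^{-m(x)}\) in the cube metric. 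The resulting~\(F\) need not be continuous on~\(X\); only its behaviour near~\(\bdy X\) will matter. By construction \(F^{-1}(K_m)\subseteq K_m\), so~\(F\) is proper, and since \(m(x)\to\infty\) as \(x\to\bdy X\), the points~\(F(x)\) leave every compact subset of~\(Y\), so that all subnet limits of~\(F(x)\) lie in~\(\bdy Y\).

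The key step is then to verify that declaring \(\cmp F|_{\bdy X}\defeq\varphi\) yields a map that is continuous on~\(\bdy X\). Fixing \(\xi\in\bdy X\) and a net \(x_\alpha\to\xi\) in~\(X\), continuity of~\(G\) together with \(m(x_\alpha)\to\infty\) and \(G(\bdy X)\subseteq\iota(\bdy Y)\) forces \(\mathrm{dist}(G(x_\alpha),\iota(\bdy Y))\to0\), so both \(\iota(w_{x_\alpha})\) and \(\iota(F(x_\alpha))\) converge to \(\lim G(x_\alpha)=\iota(\varphi(\xi))\); passing to a subnet with \(F(x_\alpha)\to y_\ast\) in the compact space~\(\cmp Y\), the depth property gives \(y_\ast\in\bdy Y\) and then \(\iota(y_\ast)=\iota(\varphi(\xi))\), whence \(y_\ast=\varphi(\xi)\) since \(\iota|_{\bdy Y}\) is injective. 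Combined with the continuity of~\(\varphi\) on~\(\bdy X\) and the density of~\(X\) in~\(\cmp X\) (as used in Lemma~\ref{lem:nc_coarse_map}), this shows~\(\cmp F\) is continuous on~\(\bdy X\) and maps~\(\bdy X\) into~\(\bdy Y\), so~\(F\) is a morphism in~\(\Compactified_p\) with \(\bdy F=\varphi\); Lemma~\ref{lem:functoriality} then makes~\(F\) coarse for the induced coarse structures. Any other such morphism has the same boundary map and is therefore close to~\(F\) by definition of closeness in~\(\Compactified\), and Lemma~\ref{lem:functoriality} propagates this to the induced coarse maps. I expect the main obstacle to be exactly this convergence argument: the cube records only the ``direction''~\(\iota\), which collapses the interior of~\(\cmp Y\), so one must combine the metric direction control with the depth condition \(F(x)\to\bdy Y\) to recover genuine convergence in the possibly non\nb-metrisable compactification~\(\cmp Y\).
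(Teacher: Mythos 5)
Your construction is correct, and it reaches the conclusion by a genuinely different route from the paper's. The paper first reduces to the case where \(\cmp{X}\) and~\(\cmp{Y}\) are second countable, using the quotient compactifications of Lemma~\ref{lem:convergence} in the appendix, and then works with metrics on the whole compactifications: for \(x\in K_{n+1}\setminus K_n\) it takes a nearest boundary point \(\delta(x)\in\bdy{X}\) and sends~\(x\) to a preselected deep point \(y'_{n,i}\in Y\setminus L_n\) lying near a finite \(2^{-n}\)\nb-net of~\(\bdy{Y}\). You instead stay in the original, possibly non-metrisable compactifications throughout: you embed~\(\bdy{Y}\) into the Hilbert cube, use Tietze to extend \(\iota\circ\varphi\) to a continuous ``direction function''~\(G\) on all of~\(\cmp{X}\) (replacing the paper's nearest-boundary-point map~\(\delta\)), and recover convergence in~\(\cmp{Y}\) by the subnet argument that all accumulation points of \(F(x_\alpha)\) lie in~\(\bdy{Y}\) by the depth condition while \(\iota|_{\bdy{Y}}\) is injective --- which reproves inline exactly what part~\ref{en:convergence2} of Lemma~\ref{lem:convergence} supplies in the paper. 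Your version is more self-contained (no appendix reduction, and nothing beyond~\(\bdy{Y}\) is ever metrised); the paper's version is more elementary pointwise but pays for that with the reduction machinery. One small repair: you need a second compact exhaustion \((L_m)\) of the \(\sigma\)\nb-compact space~\(Y\). In ``choose \(F(x)\in Y\setminus K_{m(x)}\)'' and ``\(F^{-1}(K_m)\subseteq K_m\)'' you reuse the exhaustion of~\(X\) for subsets of~\(Y\); with \(L_m\) in place, what your construction actually yields is \(F^{-1}(L_m)\subseteq K_m\), which gives properness as intended. Everything else, including the closeness statement and the appeal to Lemma~\ref{lem:functoriality} for coarseness, matches the paper.
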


\begin{proof}
  It suffices to produce morphisms in~\(\Compactified_p\).  These
  are coarse maps by Lemma~\ref{lem:functoriality}.  We need to find
  a map \(F\colon X\to Y\) such that the map \(\cmp{F}\colon
  \cmp{X}\to\cmp{Y}\) given by \(F\) on~\(X\) and~\(\varphi\)
  on~\(\bdy{X}\) is continuous on~\(\bdy{X}\).  By definition, two
  morphisms in~\(\Compactified_p\) are close if and only if they
  have the same boundary values.  Thus any two such morphisms are
  close.  The construction of~\(F\) needs some preparations.  At
  first, we assume \(\cmp{X}\) and~\(\cmp{Y}\) to be second
  countable.  We will remove these extra assumptions later.

  By assumption, the topologies on \(\cmp{X}\)
  and~\(\cmp{Y}\)
  are metrisable, that is, they may be defined by metrics
  \(d_{\cmp{X}}\)
  and~\(d_{\cmp{Y}}\) on \(\cmp{X}\) and~\(\cmp{Y}\), respectively.
  Since \(X\) and~\(Y\) are \(\sigma\)\nb-compact, there are
  increasing sequences of compact subsets \((K_n)_{n\in\N}\)
  and~\((L_n)_{n\in\N}\) with \(K_0=\emptyset\), \(X\defeq \bigcup_n
  K_n\), \(L_0=\emptyset\), \(Y\defeq \bigcup_n L_n\).  We fix
  these.
  The balls
  \[
  B^{\bdy{Y}}(y, 2^{-n})\defeq
  \{y' \in \bdy{Y} : d_{\cmp{Y}}(y', y) < 2^{-n}\}
  \]
  for \(y\in \bdy{Y}\) and fixed~\(n\) form an open cover
  of~\(\bdy{Y}\).  Since~\(\bdy{Y}\) is compact, there are finitely
  many points \(y_{n,1}, y_{n,2}, \dotsc, y_{n,l}\) such that
  \(\bdy{Y} = \bigcup_{i=1}^l B^{\bdy{Y}}(y_{n,i}, 2^{-n})\).  Since
  \(Y\subseteq \cmp{Y}\) is dense, \(Y\setminus L_n\) is dense in
  \(\cmp{Y}\setminus L_n\).  Since \(B^{\bdy{Y}}(y_{n,i},2^{-n})\)
  for \(i\in\{1,\dotsc,l\}\) is an open neighbourhood of
  \(y_{n,i}\in \cmp{Y}\setminus L_n\), there is \(y'_{i,n} \in
  (Y\setminus L_n)\cap B^{\bdy{Y}}(y_{n,i},2^{-n})\).
  Now we can define the map \(F\colon X\to Y\).  Let \(x\in X\).
  Since \(K_0=\emptyset\) and \(X\defeq \bigcup_n K_n\), there is a
  unique \(n\in\N\) with \(x\in K_{n+1}\setminus K_n\).  We first
  choose some point~\(\delta(x)\) in~\(\bdy{X}\) that is closest
  to~\(x\) with respect to the metric~\(d_{\cmp{X}}\).  Since
  \(\bdy{Y} = \bigcup_{i=1}^l B^{\bdy{Y}}(y_{n,i}, 2^{-n})\), there
  is \(i\in\{1,\dotsc,l\}\) with
  \(d_{\cmp{Y}}(\varphi(\delta(x)),y_{n,i}) <2^{-n}\).  We pick such
  an~\(i\) and let \(F(x)\defeq y'_{n,i} \in Y\).  This construction
  is illustrated in Figure~\ref{fig:boundary_extend}.
  \begin{figure}[htbp]
    \centering
    \begin{tikzpicture}
      \begin{scope}[shift={(0,.5)},scale=.5]
        \draw[very thick, name path=A] (0,5) to [out = 90, in = 195] (5, 8) to [out = 15, in = 90] (8, 2) to [out = 270, in = 15] (2,0) to [out = 195, in = 270] (0,5);
        \draw (0.5,2) to [out = 90, in = 195] (6, 7) to [out = 15, in = 90] (7, 2) to [out = 270, in = 15] (2,0.5) to [out = 195, in = 270] (0.5,2);
        \draw (2,2) to [out = 90, in = 195] (5, 5) to [out = 15, in = 90] (6, 2) to [out = 270, in = 15] (2,1) to [out = 195, in = 270] (2,2);
        \draw (3,3) to [out = 90, in = 195] (4, 4) to [out = 15, in = 90] (5, 2) to [out = 270, in = 0] (4,2.5) to [out = 180, in = 270] (3,3);
        \node [right] at (4, 5.5) {$\scriptstyle{K_{n+1}\setminus K_n}$};
        \node [left] at (5, 8.5) {$\bdy{X}$};

        \coordinate (pt) at (2.5, 5);
        \node [right] at (2.5, 5){$x$};
        \filldraw[black] (pt) circle[radius = 2pt];

        \coordinate (nearpt) at (0, 5);
        \node [left] at (0, 5)  {$x'$};
        \filldraw[black] (nearpt) circle[radius = 2pt];
      \end{scope}

      \draw[->] (4.1,3) -- (4.7, 3);

      \node [below] at (2.5, 0) {$\cmp{X}$};

      \node [below] at (7.5, 0) {$\cmp{Y}$};

      \coordinate (center_Y) at (7.5, 3);
      \begin{scope}[shift=(center_Y),scale=.7]
        \draw [very thick, name path= cir] (0,0) circle (3cm);
        \draw [fill= gray!25] (0,0) circle (2cm);

        \node[above] at (0, 0) {$L_n$};
        \node[above] at (70:3cm) {$\bdy{Y}$};

        \coordinate (pt1) at (140:3cm);
        \filldraw[black] (pt1) circle[radius = 2pt];
        \node [left] at (pt1) {$y_{n, 1}$};

        \coordinate (pt2) at (100:3cm);
        \filldraw[black] (pt2) circle[radius = 2pt];
        \node [above] at (pt2) {$y_{n, 2}$};

        \coordinate (pt3) at (50:3cm);
        \filldraw[black] (pt3) circle[radius = 2pt];
        \node [right] at (pt3)  {$y_{n, 3}$};

        \coordinate (pt4) at (360:3cm);
        \filldraw[black] (pt4) circle[radius = 2pt];
        \node [right] at (pt4) {$y_{n, 4}$};

        \coordinate (pt5) at (320:3cm);
        \filldraw[black] (pt5) circle[radius = 2pt];
        \node [right] at (pt5) {$y_{n, 5}$};

        \coordinate (pt6) at (285:3cm);
        \filldraw[black] (pt6) circle[radius = 2pt];
        \node [below] at (pt6) {$y_{n, 6}$};

        \coordinate (pt7) at (240:3cm);
        \filldraw[black] (pt7) circle[radius = 2pt];
        \node [left] at (pt7) {$y_{n, 7}$};

        \coordinate (pt8) at (190:3cm);
        \filldraw[black] (pt8) circle[radius = 2pt];
        \node [left] at (pt8) {$y_{n, 8}$};

        \coordinate (img) at (250:3cm);
        \filldraw[black] (img) circle[radius = 2pt];
        \node [below] at (img) {$\varphi(x')$};

        \coordinate (img) at (228:2.2cm);
        \filldraw[black] (img) circle[radius = 2pt];

        \foreach \i in {1,2,6,7,8}
        {
          \path[name path=nbd\i] (pt\i) circle (1.5cm);
          \path[name intersections={of={cir} and {nbd\i},by={u\i,l\i}}];
          \pgfpathmoveto{\pgfpointanchor{l\i}{north}}
          \pgfpatharcto{1.5cm}{1.5cm}{0}{0}{1}{\pgfpointanchor{u\i}{south}}
          \pgfusepath{stroke}
        }
        \foreach \i in {3,4,5}
        {
          \path[name path=nbd\i] (pt\i) circle (1.5cm);
          \path[name intersections={of={cir} and {nbd\i},by={u\i,l\i}}];
          \pgfpathmoveto{\pgfpointanchor{u\i}{north}}
          \pgfpatharcto{1.5cm}{1.5cm}{0}{0}{1}{\pgfpointanchor{l\i}{south}}
          \pgfusepath{stroke}
        }
        \node [above right=2pt,narrowfill] at (img) {$\scriptstyle y'_{n,7}= F(x)$};
      \end{scope}
    \end{tikzpicture}
    \caption{Extension of a continuous map~\(\varphi\)
      from the boundary to a map~\(F\) on the interior.}
    \label{fig:boundary_extend}
  \end{figure}
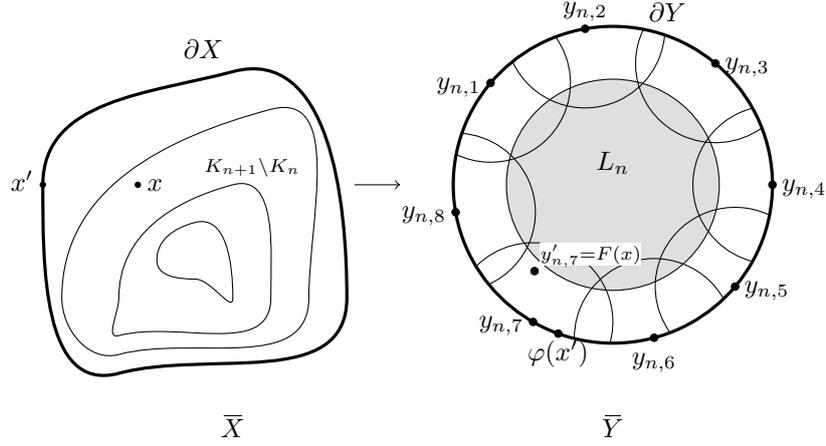

  We claim that the map \(\cmp{F}\colon \cmp{X}\to\cmp{Y}\) defined
  by \(\cmp{F}(x)= F(x)\) if \(x\in X\) and \(\cmp{F}(x)=
  \varphi(x)\) if \(x\in \bdy{X}\) is continuous on~\(\bdy{X}\).
  Let \(x_\infty\in\bdy{X}\).  We prove continuity at~\(x_\infty\).
  Since~\(\cmp{X}\) is metrisable, it suffices to check \(\lim
  \cmp{F}(x_n) = \cmp{F}(x_\infty) = \varphi(x_\infty)\) for any
  sequence~\((x_n)_{n\in\N}\) in~\(\cmp{X}\) converging
  to~\(x_\infty\).  Since~\(\varphi\) is
  continuous, it suffices to consider sequences in~\(X\).  Since
  \(\lim d_{\cmp{X}}(x_n,x_\infty)=0\), the points
  \(\delta(x_n)\in\bdy{X}\) closest to~\(x_n\) chosen in the
  construction of~\(F\) satisfy \(\lim
  d_{\cmp{X}}(\delta(x_n),x_\infty)=0\).  Hence \(\lim
  d_{\cmp{Y}}(\varphi(\delta(x_n)), \varphi(x_\infty))= 0\).  If
  \(x_n\in K_{m+1} \setminus K_m\), then \(d_{\cmp{Y}}\bigl(F(x_n),
  \varphi(\delta(x_n))\bigr) \le 2^{-m}\).  Since \((x_n)_{n\in\N}\)
  converges to a boundary point, \(m\to\infty\) as \(n\to\infty\).
  Thus both \(d_{\cmp{Y}}\bigl(F(x_n), \varphi(\delta(x_n))\bigr)\)
  and \(d_{\cmp{Y}}\bigl(\varphi(\delta(x_n)),
  \varphi(x_\infty)\bigr)\) converge to~\(0\) as \(n\to\infty\).
  The triangle inequality shows that \(\lim F(x_n)=
  \varphi(x_\infty)\) as desired.

  Now we generalise the result to the case where \(X\)
  and~\(Y\)
  are \(\sigma\)\nb-compact
  and~\(\bdy{Y}\)
  is second countable.  We have relegated the more technical parts of
  the proof to the appendix.  The image of~\(\Cont(\bdy{Y})\)
  in~\(\Cont(\bdy{X})\)
  is a separable \(\Cst\)\nb-subalgebra,
  which corresponds to a second countable quotient of~\(\bdy{X}\)
  as in Lemma~\ref{lem:convergence}.
  Lemma~\ref{lem:convergence}.\ref{en:convergence1} gives a
  compactification \(X'\subseteq \cmp{X'}\)
  and a continuous quotient map \(\varrho\colon \cmp{X}\to\cmp{X'}\)
  with \(\varrho(X)\subseteq X'\)
  and \(\varrho(\bdy{X})\subseteq \bdy{X'}\)
  such that~\(\cmp{X'}\)
  is second countable and such that
  \(\varrho^*\Cont(\bdy{X'}) \subseteq \Cont(\bdy{X})\)
  is the image \(\varphi^*(\Cont(\bdy{Y}))\)
  of~\(\Cont(\bdy{Y})\).
  Thus \(\varphi\colon \bdy{X} \to \bdy{Y}\)
  factors through a continuous map
  \(\varphi'\colon \bdy{X'} \to \bdy{Y}\).
  If we can extend~\(\varphi'\)
  to a map \(\cmp{F'}\colon \cmp{X'} \to \cmp{Y}\)
  that is continuous on~\(\bdy{X'}\),
  then \(\cmp{F}\defeq \cmp{F'}\circ \varrho\colon \cmp{X}\to\cmp{Y}\)
  is the desired extension of~\(\varphi\)
  that is continuous on~\(\bdy{X}\).
  Thus it suffices to prove the theorem for \(X'\subseteq \cmp{X'}\)
  instead of \(X\subseteq \cmp{X}\).
  Thus it is no loss of generality to assume~\(\cmp{X}\)
  to be second countable.

  Similarly, since~\(Y\)
  is \(\sigma\)\nb-compact
  and~\(\bdy{Y}\)
  is second countable,
  Lemma~\ref{lem:convergence}.\ref{en:convergence1} gives a
  compactification \(Y'\subseteq \cmp{Y'}\)
  and a continuous quotient map \(\varrho\colon \cmp{Y}\to\cmp{Y'}\)
  with \(\varrho(Y)\subseteq Y'\)
  such that~\(\cmp{Y'}\)
  is second countable and~\(\varrho\)
  restricts to a homeomorphism \(\bdy{Y}\congto \bdy{Y'}\).
  The proof above applies to \(Y'\subseteq \cmp{Y'}\),
  which is second countable, so we get a map
  \(\cmp{F'}\colon \cmp{X}\to\cmp{Y'}\)
  with \(\cmp{F}(X)\subseteq Y'\)
  that extends \(\varrho\circ\varphi\)
  on~\(\bdy{X}\)
  and that is continuous on~\(\bdy{X}\).
  Since \(Y\to Y'\)
  is surjective, we may lift~\(\cmp{F'}\)
  to a map \(\cmp{F}\colon \cmp{X}\to\cmp{Y}\).
  This extends~\(\varphi\)
  on~\(\bdy{X}\)
  because \(\varrho|_{\bdy{Y}}\)
  is bijective.  We claim that it is continuous on~\(\bdy{X}\).
  It suffices to check that \(\lim F(x_\alpha) = \varphi(x_\infty)\)
  for any net~\((x_\alpha)\)
  in~\(X\)
  with \(\lim x_\alpha = x_\infty\in \bdy{X}\).
  Since~\(\cmp{F'}\)
  is continuous,
  \(\lim \varrho\circ F(x_\alpha) = \varrho\circ \varphi(x_\infty)\).
  This implies \(\lim F(x_\alpha) = \varphi(x_\infty)\)
  by Lemma~\ref{lem:convergence}.\ref{en:convergence2}.
\end{proof}

\appendix

\section{Compactifications with non-metrisable interiors}

The lemmas below help to extend
Proposition~\ref{pro:compactification2coarse_metrisable} and
Theorem~\ref{the:lift_metrisable} from second countable
compactifications to compactifications with $\sigma$\nb-compact
interior and second countable boundary.

\begin{lemma}
  \label{lem:convergence}
  Let~$X$ be a $\sigma$\nb-compact, locally compact space with a
  compactification~$\cmp{X}$.  Let~\(\bdy{Y}\)
  be a second countable quotient of the boundary~$\bdy{X}$.
  \begin{enumerate}
  \item \label{en:convergence1} There are a compactification
    \(X'\subseteq \cmp{X'}\)
    and a continuous map \(\varrho\colon \cmp{X}\to\cmp{X'}\)
    with \(\varrho(X)\subseteq X'\)
    and \(\varrho(\bdy{X})\subseteq \bdy{X'}\),
    such that~\(\cmp{X'}\)
    is second countable and the induced map
    \(\bdy{\varrho}\colon \bdy{X}\to \bdy{X'}\)
    is the given map \(\bdy{X} \to \bdy{Y}\).
  \item \label{en:convergence2} Assume \(\bdy{X}=\bdy{Y}\)
    and let \(X'\subseteq \cmp{X'}\)
    and~\(\varrho\)
    be as in~\ref{en:convergence1}.  Let \(x_\infty\in \bdy{X}\)
    and let~\((x_\alpha)_{\alpha\in I}\)
    be a net in~\(X\).
    If \(\lim \varrho(x_\alpha) = \varrho(x_\infty)\),
    then \(\lim x_\alpha = x_\infty\).
  \item \label{en:convergence3} If \(\bdy{X}=\bdy{Y}\)
    is second countable, then any \(x_\infty\in \bdy{X}\)
    is the limit of a convergent sequence~\((x_n)\) in~\(X\).
  \end{enumerate}
\end{lemma}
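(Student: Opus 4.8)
The plan is to establish part~\ref{en:convergence1} by Gelfand duality and then deduce parts~\ref{en:convergence2} and~\ref{en:convergence3} from it by soft point-set topology. Throughout I use the dictionary of Lemma~\ref{lem:compactification}: a second countable compactification sitting below~\(\cmp{X}\) corresponds to a \emph{separable} unital \(\Cst\)\nb-subalgebra \(D\subseteq \Cont(\cmp{X})\), where \(\cmp{X'}\defeq \hat D\) is the Gelfand spectrum, \(\varrho\colon \cmp{X}\to\cmp{X'}\) is dual to the inclusion \(D\hookrightarrow\Cont(\cmp{X})\), and, setting \(J\defeq D\cap\Cont_0(X)=\{f\in D: f|_{\bdy{X}}=0\}\), one has \(X'\defeq \hat J\) open in \(\cmp{X'}\) and \(\bdy{X'}=\widehat{D/J}\).

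For part~\ref{en:convergence1}, let \(q\colon \bdy{X}\to\bdy{Y}\) be the given quotient map, so \(q^*\colon \Cont(\bdy{Y})\to\Cont(\bdy{X})\) is an injective unital \Star{}homomorphism with separable image since~\(\bdy{Y}\) is second countable. I fix a countable generating set~\((g_n)\) of~\(\Cont(\bdy{Y})\), lift each \(q^*g_n\) to some \(h_n\in\Cont(\cmp{X})\) along the surjection \(\Cont(\cmp{X})\prto\Cont(\bdy{X})\), and pick a strictly positive element \(e\in\Cont_0(X)\), which exists because~\(X\) is \(\sigma\)\nb-compact. Let~\(D\) be the unital \(\Cst\)\nb-subalgebra generated by \(\{1,e\}\cup\{h_n:n\in\N\}\); it is separable, so \(\cmp{X'}=\hat D\) is second countable. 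Now I verify the three required properties. The image of~\(D\) in~\(\Cont(\bdy{X})\) is generated by the images \(e|_{\bdy{X}}=0\) and~\(q^*g_n\); as \(q^*\) is isometric this image is exactly \(q^*\Cont(\bdy{Y})\), whence \(D/J\cong\Cont(\bdy{Y})\), \(\bdy{X'}=\widehat{D/J}=\bdy{Y}\), and \(\bdy\varrho=q\). Since \(e\in J\) vanishes nowhere on~\(X\) we get \(\varrho(X)\subseteq X'\), while every \(f\in J\subseteq\Cont_0(X)\) vanishes on \(\bdy{X}\), giving \(\varrho(\bdy{X})\subseteq\bdy{X'}\). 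Finally~\(J\) is essential in~\(D\): any nonzero \(f\in D\) is nonzero at some point of the dense set~\(X\), so \(ef\neq0\) lies in~\(J\); hence \(X'\subseteq\cmp{X'}\) is a genuine (dense, open) compactification.

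For part~\ref{en:convergence2} we now have \(q=\Id\), so \(\bdy\varrho\colon \bdy{X}\to\bdy{X'}\) is a homeomorphism, in particular injective, and \(\varrho^{-1}(\bdy{X'})=\bdy{X}\) since \(\varrho(X)\subseteq X'\) and \(\varrho(\bdy{X})\subseteq\bdy{X'}\). Given a net~\((x_\alpha)\) in~\(X\) with \(\lim\varrho(x_\alpha)=\varrho(x_\infty)\), I prove \(\lim x_\alpha=x_\infty\) by the subnet criterion in the compact Hausdorff space~\(\cmp{X}\): an arbitrary subnet has a convergent sub-subnet \(x_\gamma\to z\in\cmp{X}\), and continuity of~\(\varrho\) with uniqueness of limits in~\(\cmp{X'}\) forces \(\varrho(z)=\varrho(x_\infty)\in\bdy{X'}\); thus \(z\in\bdy{X}\), and injectivity of~\(\bdy\varrho\) yields \(z=x_\infty\). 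As every subnet has a further subnet converging to~\(x_\infty\), the whole net converges to~\(x_\infty\).

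Part~\ref{en:convergence3} then combines these. Applying part~\ref{en:convergence1} with \(q=\Id\), the space \(\cmp{X'}\) is second countable compact Hausdorff, hence metrisable, and \(\varrho(X)\) is dense in~\(\cmp{X'}\) because~\(X\) is dense in~\(\cmp{X}\) and~\(\varrho\) is a continuous surjection. Therefore \(\varrho(x_\infty)\) is the limit of a sequence \(\bigl(\varrho(x_n)\bigr)\) with \(x_n\in X\), and part~\ref{en:convergence2} upgrades this to \(\lim x_n=x_\infty\) in~\(\cmp{X}\), as desired. I expect the main obstacle to be part~\ref{en:convergence1}: the delicate point is arranging a single countable generating set so that~\(D\) is simultaneously separable, has boundary quotient exactly \(\Cont(\bdy{Y})\), and carries an essential ideal~\(J\) realising \(X'\) as a dense open subset; the strictly positive element~\(e\) is precisely what makes density, openness, and \(\varrho(X)\subseteq X'\) hold at once.
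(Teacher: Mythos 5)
Your proof is correct and takes essentially the same route as the paper: part~(1) is obtained by Gelfand duality applied to a separable unital \(\Cst\)\nb-subalgebra of \(\Cont(\cmp{X})\) generated by lifts (via Tietze) of a countable generating set of \(\Cont(\bdy{Y})\) together with a witness of \(\sigma\)\nb-compactness, and parts (2) and~(3) are then deduced. Two minor deviations are worth recording. In~(1) the paper adjoins a countable approximate unit of \(\Cont_0(X)\), whereas you adjoin a single strictly positive element~\(e\); both choices make the ideal essential and force \(\varrho(X)\subseteq X'\), and your~\(e\) in fact yields an approximate unit \((e^{1/n})\) inside the ideal, so nothing is lost. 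In~(2) the paper argues through functions --- nondegeneracy of \(\Cont_0(X')\hookrightarrow\Cont_0(X)\) rules out accumulation points of the net in~\(X\), and Urysohn rules out accumulation at other boundary points --- whereas you observe that \(\varrho^{-1}(\bdy{X'})=\bdy{X}\) with \(\bdy{\varrho}\) injective, so every accumulation point maps to \(\varrho(x_\infty)\in\bdy{X'}\) and must equal~\(x_\infty\); compactness then finishes both arguments. Your version of~(2) is slightly cleaner and uses only the properties stated in the conclusion of~(1), rather than details of the construction, which the paper has to reinvoke explicitly at that point.
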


\begin{proof}
  We prove~\ref{en:convergence1}.  Since~\(\bdy{Y}\)
  is second countable, \(\Cont(\bdy{Y})\)
  contains a dense sequence~\((f_n)_{n\in\N}\).
  The restriction map \(\Cont(\cmp{X})\to\Cont(\bdy{X})\)
  is surjective by the Tietze Extension Theorem.  Therefore, there are
  functions \(g_n\in \Cont(\cmp{X})\)
  for \(n\in\N\)
  with \(g_n|_{\bdy{X}} = f_n\).
  Since~\(X\)
  is \(\sigma\)\nb-compact,
  there is a countable approximate unit~\((u^X_n)_{n\in\N}\)
  in~\(\Cont_0(X)\).
  Let \(\cmp{A}\subseteq \Cont(\cmp{X})\)
  be the \(\Cst\)\nb-subalgebra
  generated by \(\{1,g_n,u^X_n\mid n\in\N\}\).
  Let \(\bdy{A} \subseteq \Cont(\bdy{X})\)
  be the image of~\(\cmp{A}\)
  under the quotient map \(\Cont(\cmp{X})\to\Cont(\bdy{X})\),
  and let \(A= \ker(\bdy{A} \prto \bdy{A})\).
  By construction, \(\cmp{A}\)
  and~\(\bdy{A}\) are unital, separable, and commutative.  So
  \[
  A \cong \Cont(X'),\qquad
  \cmp{A} = \Cont(\cmp{X'}),\qquad
  \bdy{A} = \Cont(\bdy{X'})
  \]
  for a second countable, compact space~\(\cmp{X'}\), an open
  subspace \(X' \subseteq \cmp{X'}\), and \(\bdy{X'} \defeq
  \cmp{X'}\setminus X'\).  By construction, we have a morphism of
  extensions
  \[
  \begin{tikzpicture}[baseline=(current bounding box.west)]
    \matrix (m) [cd,column sep=1em] {
      \Cont_0(X) & {\Cont(\cmp{X})} & \Cont(\bdy{X})\\
      \Cont_0(X') & {\Cont(\cmp{X'})} & \Cont(\bdy{X'}).\\
    };
    \draw[>->] (m-1-1) -- (m-1-2);
    \draw[>->] (m-2-1) -- (m-2-2);
    \draw[->>] (m-1-2) -- (m-1-3);
    \draw[->>] (m-2-2) -- (m-2-3);
    \draw[right hook->] (m-2-1) -- (m-1-1);
    \draw[right hook->] (m-2-2) -- (m-1-2);
    \draw[right hook->] (m-2-3) -- (m-1-3);
  \end{tikzpicture}
  \]
  The inclusion map \(\Cont_0(X')\hookrightarrow \Cont_0(X)\)
  is nondegenerate because its image contains an approximate unit
  for~\(\Cont_0(X)\).
  Since the extension in the top row is essential, so is that in the
  bottom row.  That is, \(X'\subseteq \cmp{X'}\)
  is dense.  So~\(\cmp{X'}\)
  is a compactification of~\(X'\)
  with boundary~\(\bdy{X'}\).
  The inclusion map \(\Cont(\cmp{X'}) \hookrightarrow \Cont(\cmp{X})\)
  corresponds to a quotient map
  \(\varrho\colon \cmp{X}\prto\cmp{X'}\),
  which maps~\(X\)
  to~\(X'\)
  and \(\bdy{X}\)
  to~\(\bdy{X'}\).
  By construction,
  \(\Cont(\bdy{Y}) = \Cont(\bdy{X'}) \subseteq \Cont(\bdy{X})\).
  So we may identify \(\bdy{X'} = \bdy{Y}\).

  We prove~\ref{en:convergence2}.  Any quotient as
  in~\ref{en:convergence1} comes from a compactification
  \(A \into \cmp{A} \prto \bdy{A}\)
  as in the proof of~\ref{en:convergence1}, so we may assume this.
  Since~\(A\)
  is separable, \(X'\)
  is metrisable, so we may define its topology by a metric~\(d'\).
  Then \(\varrho^*(d')\)
  is a continuous quasi-metric on~\(X\).
  Since \(\lim \varrho(x_\alpha) = \varrho(x_\infty)\in\bdy{X'}\),
  we have \(\lim f(\varrho(x_\alpha)) = 0\)
  for all \(f\in A\).
  This implies \(\lim f(x_\alpha) = 0\)
  for all \(f\in \Cont_0(X)\)
  because \(\varrho^*\colon A\to \Cont_0(X)\)
  is nondegenerate.  This rules out an accumulation point
  of~\((x_\alpha)\)
  in~\(X\).
  Let \(y\in\bdy{X}\setminus\{x_\infty\}\).
  Then \(\varrho(y)\neq \varrho(x_\infty)\)
  because~\(\varrho|_{\bdy{X}}\)
  is a homeomorphism.  By Urysohn's Lemma, there is
  \(f \in\Cont(\cmp{X'})\)
  with \(f(\varrho(y)) = 1\)
  and \(f(\varrho(x_\infty)) = 0\).
  Then
  \(\lim f(\varrho(x_\alpha)) = f(\varrho(x_\infty))\neq
  f(\varrho(y))\).
  So~\((x_\alpha)\)
  cannot accumulate at~\(y\).
  Thus~\(x_\infty\)
  is the only possible accumulation point of~\((x_\alpha)\).
  Since~\(\cmp{X}\)
  is compact, this implies that \(\lim x_\alpha = x_\infty\).

  Finally, we prove~\ref{en:convergence3}.  We construct
  \(X'\subseteq \cmp{X'}\)
  as in~\ref{en:convergence1}.  There is a sequence~\((x'_n)\)
  in~\(X'\)
  with \(\lim x'_n = \varrho(x_\infty)\)
  because~\(\cmp{X'}\)
  is second countable.  Since \(X\to X'\)
  is surjective, we may lift it to a sequence~\((x_n)\)
  in~\(X\)
  with \(\lim \varrho(x_n) = \varrho(x_\infty)\).
  Now~\ref{en:convergence2} gives \(\lim x_n = x_\infty\).
\end{proof}

\begin{lemma}
  \label{lem:controlled nbd}
  Let~$X$ be a $\sigma$\nb-compact, locally compact, Hausdorff space
  and let \(X\subseteq \cmp{X}\)
  be a compactification with metrisable boundary~$\bdy{X}$.  The
  corresponding topological coarse structure on~\(X\)
  has a controlled neighbourhood of the diagonal.
\end{lemma}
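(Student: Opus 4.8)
The plan is to reduce to the case of a second countable compactification and there build an explicit controlled neighbourhood from a compatible metric. Since $\bdy X$ is compact and metrisable, it is second countable, so Lemma~\ref{lem:convergence}.\ref{en:convergence1}, applied with $\bdy Y \defeq \bdy X$ and the identity quotient map, produces a second countable compactification $X'\subseteq\cmp{X'}$ together with a continuous map $\varrho\colon\cmp X\to\cmp{X'}$ satisfying $\varrho(X)\subseteq X'$ and $\varrho(\bdy X)\subseteq\bdy{X'}$, and such that the induced map $\bdy\varrho\colon\bdy X\to\bdy{X'}$ is the identity; in particular it is injective.

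Next I would treat the second countable compactification $\cmp{X'}$ as the base case. Being compact, Hausdorff and second countable, $\cmp{X'}$ is metrisable; fix a metric $d'$ inducing its topology. Since $X'$ is open and $\bdy{X'}=\cmp{X'}\setminus X'$ is closed, the function $\delta\colon X'\to(0,\infty)$, $\delta(x)\defeq d'(x,\bdy{X'})$, is continuous and strictly positive. I then set
\[
U' \defeq \Bigl\{(x,y)\in X'\times X' \;\Big|\; d'(x,y) < \tfrac12\min\bigl(\delta(x),\delta(y)\bigr)\Bigr\}.
\]
This is open and contains the diagonal, because $d'(x,x)=0<\tfrac12\delta(x)$. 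It is controlled for $\cmp{X'}$: if a net $(x_i,y_i)$ in $U'$ converges in $\cmp{X'}\times\cmp{X'}$ to $(a,b)$ with, say, $a\in\bdy{X'}$, then $\delta(x_i)\le d'(x_i,a)\to0$, hence $d'(x_i,y_i)\to0$ and $d'(a,b)=0$, so $a=b$; the case $b\in\bdy{X'}$ is symmetric. Thus $U'$ is a controlled neighbourhood of the diagonal in $X'\times X'$.

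Finally I would pull $U'$ back and set $\tilde U\defeq(\varrho\times\varrho)^{-1}(U')\cap(X\times X)$. It is open and, for $x\in X$, contains $(x,x)$ since $\varrho(x)\in X'$ forces $(\varrho(x),\varrho(x))\in U'$; hence $\tilde U$ is a neighbourhood of the diagonal. To see that it is controlled, let $(x_i,y_i)$ be a net in $\tilde U$ converging in $\cmp X\times\cmp X$ to $(a,b)$ with $a\in\bdy X$, say. Then $(\varrho(x_i),\varrho(y_i))$ lies in $U'$ and converges to $(\varrho(a),\varrho(b))$, and $\varrho(a)\in\bdy{X'}$; controlledness of $U'$ gives $\varrho(a)=\varrho(b)$. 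If $b\in X$ then $\varrho(b)\in X'$, contradicting $\varrho(b)=\varrho(a)\in\bdy{X'}$; so $b\in\bdy X$, and injectivity of $\bdy\varrho$ forces $a=b$. Hence $\tilde U$ is a controlled neighbourhood of the diagonal in $X\times X$, as required.

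The main obstacle is precisely the mixed situation in which one coordinate of a net tends to $\bdy X$ while the other remains in the interior: a metric living only on the boundary cannot exclude it, and $\cmp X$ itself need not be metrisable. Reducing to the second countable compactification $\cmp{X'}$ is what overcomes this, since there a genuine metric on all of $\cmp{X'}$ together with the distance-to-boundary function $\delta$ controls both coordinates simultaneously; the properties $\varrho(X)\subseteq X'$, $\varrho(\bdy X)\subseteq\bdy{X'}$ and the injectivity of $\bdy\varrho$ then transport the conclusion back to $X$.
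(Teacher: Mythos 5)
Your proof is correct. It shares the paper's essential first step---invoking Lemma~\ref{lem:convergence}.\ref{en:convergence1} with \(\bdy{Y}=\bdy{X}\) (second countable because compact and metrisable) to obtain a second countable compactification \(X'\subseteq\cmp{X'}\) and the map \(\varrho\)---but the controlled neighbourhood itself is built differently. The paper pulls the metric \(d'\) back to a continuous pseudometric \(\varrho^*(d')\) on \(\cmp{X}\) and forms \(E=\bigcup_n E_n\) with \(E_n=\{(x,y)\in U_n\times U_n : \varrho^*(d')(x,y)<2^{-n}\}\) for an exhaustion of \(X\) by relatively compact open sets \(U_n\); the thresholds shrink along the exhaustion, and the final identification of limits is delegated to Lemma~\ref{lem:convergence}.\ref{en:convergence2}. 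You let the threshold be governed by the distance-to-boundary function \(\delta\) on \(X'\), construct the controlled set \(U'\) upstairs in \(X'\times X'\), and pull it back; controlledness of the pullback then needs only that \(\varrho^{-1}(X')\subseteq X\) and that \(\varrho|_{\bdy{X}}\) is injective, rather than the full strength of Lemma~\ref{lem:convergence}.\ref{en:convergence2} (which is legitimate here, since the net is already assumed to converge in \(\cmp{X}\times\cmp{X}\)). Your version dispenses with the explicit compact exhaustion, so \(\sigma\)\nb-compactness enters only through the appendix lemma itself, and is if anything slightly cleaner; the only point worth adding is the degenerate case \(\bdy{X}=\emptyset\), where \(\delta\equiv+\infty\), which is harmless because then every subset of \(X\times X\) is controlled.
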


\begin{proof}
  Construct a second countable compactification
  \(X'\subseteq \cmp{X'}\)
  and a continuous quotient map \(\varrho\colon \cmp{X}\to\cmp{X'}\)
  as in Lemma~\ref{lem:convergence}.\ref{en:convergence1}.  We may
  equip~\(\cmp{X'}\)
  with a metric~\(d'\)
  that defines its topology.  Let $\varrho^*(d')$ be the resulting
  continuous pseudo-metric on~\(\cmp{X}\).
  Since~\(X\)
  is \(\sigma\)\nb-compact,
  there is an increasing family of relatively compact, open subsets
  $U_n\subseteq X$ with \(\bigcup U_n = X\).  For $n\in\N$, let
  \[
  E_n \defeq \{(x, y) \in U_n\times U_n: d(x, y) < 2^{-n}\}.
  \]
  This is open in \(U_n\times U_n\)
  for each~$n$ because~$d$ is continuous on $\cmp{X} \times \cmp{X}$,
  and it contains the diagonal of~\(U_n\).
  Thus the union $E = \bigcup_n E_n$ is an open neighbourhood of the
  diagonal in~\(X\).
  We claim that~\(E\)
  is controlled.  So let \((x_\alpha,y_\alpha)\)
  be a net in~\(E\)
  that converges in \(\cmp{X}\times\cmp{X}\),
  and assume, say, that \(\lim x_\alpha\in \bdy{X}\).
  We must prove \(\lim x_\alpha = \lim y_\alpha\).
  Choose \(n(\alpha)\in\N\)
  minimal with \(x_\alpha\in U_{n(\alpha)}\).
  Then \(\lim n(\alpha)=\infty\)
  because \(\lim x_\alpha\in \bdy{X}\).
  And
  \(d'(\varrho(x_\alpha),\varrho(y_\alpha)) =
  d(x_\alpha,y_\alpha)<2^{-n(\alpha)}\)
  because \((x_\alpha,y_\alpha)\in E\).
  Thus \(\lim \varrho(x_\alpha) = \lim \varrho(y_\alpha)\)
  because~\(d'\)
  defines the topology on~\(\cmp{X'}\).
  Now Lemma~\ref{lem:convergence}.\ref{en:convergence2} gives \(\lim
  x_\alpha = \lim y_\alpha\).
\end{proof}

\begin{bibdiv}
  \begin{biblist}
    \bibselect{references}
  \end{biblist}
\end{bibdiv}
\end{document}